\numberwithin{equation}{section}
\theoremstyle{plain}
\newtheorem{thm}{Theorem}
\newtheorem{prop}[thm]{Proposition}
\newtheorem{cor}[thm]{Corollary}
\newtheorem{lem}[thm]{Lemma}
\theoremstyle{definition}
\newtheorem{defi}[thm]{Definition}
\theoremstyle{remark}
\newtheorem{rmk}[thm]{Remark}
\numberwithin{thm}{section}
\newcommand{\adeles}{\mathbb{A}}
\newcommand{\moduli}{\mathcal{M}}
\newcommand{\hn}{\mathcal{H}_n}
\newcommand{\A}{\univav}
\newcommand{\OK}{\mathcal{O}_{\cmfield}}
\newcommand{\Hom}{\mbox{Hom}}
\newcommand{\dual}{^\vee}
\newcommand{\isomto}{\overset{\sim}{\rightarrow}}
\newcommand{\lp}{\left(}
\newcommand{\rp}{\right)}
\newcommand{\ci}{C^{\infty}}
\newcommand{\omci}{\underline{\omega}(\ci)}
\newcommand{\dr}{{H^1_{DR}}}
\newcommand{\drci}{\dr(\ci)}
\newcommand{\Split}{\mathrm{Split}}
\newcommand{\splci}{\Split(\ci)}
\newcommand{\splpa}{\Split(\padic)}
\newcommand{\fci}{\mathcal{F}(\ci)}
\newcommand{\etan}{\eta_n}
\newcommand{\zj}{z_j}
\newcommand{\ej}{e_j}
\newcommand{\zjp}{z_j'}
\newcommand{\ejp}{e_j'}
\newcommand{\s}{^*}
\newcommand{\aj}{\alpha_j}
\newcommand{\bj}{\beta_j}
\newcommand{\ajp}{\alpha_j'}
\newcommand{\bjp}{\beta_j'}
\newcommand{\ai}{\alpha_i}
\newcommand{\bi}{\beta_i}
\newcommand{\aip}{\alpha_i'}
\newcommand{\bip}{\beta_i'}
\newcommand{\lz}{L_z}
\newcommand{\az}{A_z}
\newcommand{\rhost}{\rho_{st}}
\newcommand{\uo}{\underline{\omega}}
\newcommand{\IKS}{I_{KS}}
\newcommand{\level}{\alpha}
\newcommand{\peltup}{(A, \lambda, \iota, \level)}
\newcommand{\padic}{p\mbox{-adic}}
\newcommand{\agauto}{\mbox{\bf M}}
\newcommand{\padicauto}{\agauto^{\padic}}
\newcommand{\JKS}{J_{KS}}
\newcommand{\ciop}{\partial(\rho, \ci, d)}
\newcommand{\ciopo}{\partial(\rho, \ci, 1)}
\newcommand{\paop}{\partial(\rho, \padic, d)}
\newcommand{\paopo}{\partial(\rho, \padic, 1)}
\newcommand{\ompa}{\uo(\padic)}
\newcommand{\x}{(X, \lambda, \iota, \alpha)}
\newcommand{\splxr}{\Split(X/R)}
\newcommand{\ir}{\iota_R}
\newcommand{\omxr}{\uo_{X/R}}
\newcommand{\drxr}{\dr(X/R)}
\newcommand{\strict}{\dag}
\newcommand{\pstrict}{$\mathrm(\dag)$}
\newcommand{\pstrictp}{$\mathrm({\ddag})$}
\newcommand{\uroot}{\underline{U}}
\newcommand{\Oig}{\mathcal{O}_{T_{\infty, \infty}}}
\newcommand{\Oigg}{\Oig^n}
\newcommand{\omcan}{\uo_{can}}
\newcommand{\hermpair}{\langle \bullet, \bullet\rangle}
\newcommand{\cpctopen}{K}
\newcommand{\cmfield}{\mathcal{K}}
\newcommand{\qexpn}{\sum_{h\in H\dual} \lp c(h)q^h\rp}
\newcommand{\qexpnp}{\sum_{h\in H\dual} \lp c(h)q^{ph}\rp} 
\newcommand{\qexpringb}{\left(\OK\right)_{(p)}[[q, H\dual_{\geq 0}]]}
\newcommand{\qexpringp}{\left(\OK\right)_{(p)}((q, H\dual_{\geq 0}))}
\newcommand{\tateav}{\mbox{Mum}_{L}(q)}
\newcommand{\tate}{\mbox{Tate}}
\newcommand{\omm}{\{\omega_i^-\}_{i=1}^n}
\newcommand{\omp}{\{\omega_i^+\}_{i=1}^n}
\newcommand{\LL}{\mathcal{L}}
\newcommand{\EE}{\mathcal{E}}
\newcommand{\uA}{\underline{A}}
\newcommand{\glnc}{GL_n(\IC)}
\newcommand{\gln}{GL_n}
\newcommand{\EAR}{\EE_{\uA/R}}
\newcommand{\EARp}{\EAR^+}
\newcommand{\EARm}{\EAR^-}
\newcommand{\EARpm}{\EAR^{\pm}}
\newcommand{\EARVr}{\EE_{(\uA, V, \rho)/R}}
\newcommand{\OM}{\mathcal{O}_{\moduli}}
\newcommand{\EEVr}{\EE_{V, \rho}}
\newcommand{\Vlam}{V_{\Lambda}}
\newcommand{\rholam}{\rho_{\Lambda}}
\newcommand{\e}{\bf{e}}
\newcommand{\mformsxrd}{(\uo^-_{X/R})^{\rho^-\otimes\rho_{st}^d}\otimes(\uo^+_{X/R})^{\rho^+\otimes\rho_{st}^d}}
\newcommand{\mformsnost}{(\uo^-)^{\rho_-}\otimes(\uo^+)^{\rho_+}}
\newcommand{\rmrp}{\rho_-\otimes\rho_+}
\newcommand{\mformspa}{\mformsnost(\padic)}
\newcommand{\mformsst}{(\uo^-)^{\rho_-\otimes\rhost}\otimes(\uo^+)^{\rho_+\otimes\rhost}}
\newcommand{\ev}{\epsilon_\nu}
\newcommand{\KSh}{_{\cpctopen}\mathbb{S}}
\newcommand{\pmr}{P_{m,r}}
\newcommand{\mr}{_{m,r}}
\newcommand{\ig}{T_{\infty, \infty}}
\newcommand{\ptors}{\mu_{p^{\infty}}^g}
\newcommand{\me}{m_E}
\newcommand{\mep}{m_{E'}}
\newcommand{\cmtype}{\Sigma}
\newcommand{\gothsig}{\mathfrak{S}}
\newcommand{\scripto}{\mathcal{O}}
\newcommand{\gm}{\mathbb{G}_m}
\newcommand{\uq}{\underline{q}}
\newcommand{\lie}{\mbox{Lie}}
\newcommand{\Lie}{\lie}
\newcommand{\IR}{\mathbb{R}}
\newcommand{\ZZ}{\mathbb{Z}}
\newcommand{\IC}{\mathbb{C}}
\newcommand{\IQ}{\mathbb{Q}}
\newcommand{\univav}{A_{\mathrm{univ}}}
\newcommand{\image}{\mbox{Image}}
\newcommand{\id}{\mbox{Id}}
\newcommand{\sym}{\mbox{Sym}}
\newcommand{\End}{\mbox{End}}
\newcommand{\shfunctor}{_{\cpctopen}\mathbb{A}_V}
\newcommand{\incl}{\mbox{incl}}
\newcommand{\smargin}[1]{ }
\newcommand{\an}{\mathrm{an}}
\newcommand{\hol}{\mathrm{hol}}
\newcommand{\eval}{\mathrm{eval}}
\newcommand{\Isom}{\mathrm{Isom}}
\newcommand{\Spec}{\mathrm{Spec}}
\newcommand{\freealgrt}{R\langle T_1, \ldots, T_n\rangle}
\newcommand{\noncomalg}{\mathfrak{R}}
\newcommand{\rvaluedpt}{\underline{A}}
\newcommand{\ptopmoduli}{_{\cpctopen}\mathbb{S}(G, X)}
\newcommand{\et}{{\mbox{\'et}}}
\def\tr{{\rm tr}\,}
\begin{document}
\bibliographystyle{amsalpha} 

\title{$p$-adic Differential Operators on Automorphic Forms on Unitary Groups
 \\
{\it Op\'erateurs diff\'erentiels $p$-adiques sur formes automorphes pour groupes unitaires}}
\author{Ellen E. Eischen\thanks{This research was partially supported by a fellowship from the Lucent Foundation.}}
\date{April 25, 2011}

\maketitle

\begin{abstract}
The goal of this paper is to study certain $p$-adic differential operators on automorphic forms on $U(n,n)$.  These operators are a generalization to the higher-dimensional, vector-valued situation of the $p$-adic differential operators constructed for Hilbert modular forms by N. Katz.  They are a generalization to the $p$-adic case of the $\ci$-differential operators first studied by H. Maass and later studied extensively by M. Harris and G. Shimura.  The operators should be useful in the construction of certain $p$-adic $L$-functions attached to $p$-adic families of automorphic forms on the unitary groups $U(n)\times U(n)$.

 \begin{center}{\bf R\'esum\'e}\end{center}
 \hspace{.4cm} Nous construisons certains op\'erateurs diff\'erentiels $C^{\infty}$ et leurs analogues $p$-adiques, qui agissent sur des formes automorphes (\`a valeurs vectorielles ou scalaires) pour les groupes unitaires $U (n, n)$.  Nous \'etudions des propri\'et\'es de ces op\'erateurs, et nous les utilisons \`a prouver quelques th\'eor\`emes arithmetiques.  Ces op\'erateurs diff\'erentiels sont une g\'en\'eralisation au cas $p$-adique des op\'erateurs diff\'erentiels $C^{\infty}$ \'etudi\'es d'abord par H. Maass et \'etudi\'es ensuite en d\'etail par M. Harris et G. Shimura.  Ils sont une g\'en\'eralisation au cas \`a valeurs vectorielles des op\'erateurs diff\'erentiels $p$-adiques construits dans le cas des formes modulaires par N. Katz.  Ils devraient \^etre utiles dans la construction de certaines fonctions $L$ $p$-adiques, en particulier les fonctions $L$ $p$-adiques attach\'ees aux familles p-adiques de formes automorphes pour les groupes unitaires $U (n)\times U (n)$.
\end{abstract}

\tableofcontents

\section{Introduction}
\label{theintro} 

%

The goal of this paper is to study certain $p$-adic differential operators on automorphic forms on $U(n,n)$.  This is one step in an ongoing project to construct certain $p$-adic $L$-functions attached to $p$-adic families of automorphic forms on $U(n)\times U(n)$.  For example, in analogue with \cite{kaCM}, these differential operators will be used in \cite{apptoSHL} to generalize the construction of the Eisenstein measure in \cite{SHL}.  This, in turn, gives a more general construction of the $L$-functions in \cite{EEHLS} than the one proposed in \cite{SHL}. 

The differential operators in this paper eliminate some of the restrictions on the extent to which the construction of $p$-adic $L$-functions proposed in \cite{SHL} can be generalized to construct more general $2$- or $3$-variable $p$-adic $L$-functions attached to families of automorphic forms:
\begin{enumerate}
\item{As the introduction of \cite{SHL} notes, M. Harris, J.-S. Li, and C. Skinner interpolate the $L$-function at a fixed point $s_0$; removal of the restriction that $s_0$ be fixed requires the differential operators that are the topic of this paper.  This issue will be addressed in \cite{apptoSHL}.}
\item{No one has constructed $p$-adic $L$-functions attached to vector-valued automorphic forms, only scalar-valued automorphic forms.  The differential operators are expected to make this generalization possible.}
\item{In particular, no one has constructed $p$-adic $L$-functions attached to vector-valued families of {\it non}-ordinary automorphic forms other than modular forms.  This project originated from my attempt to construct (two- and three-variable) $p$-adic $L$-functions attached to certain families of overconvergent automorphic forms on $U(n,n)$.  This paper can be viewed as a step in that project, which has turned out to be more widely applicable.}
\end{enumerate}

The length of this paper is due to the fact that much of the ``background" material  provided here is not recorded elsewhere but is necessary for the discussion in this paper.  The details in the background sections should also serve as a reference for others.  For example, our discussion of the Kodaira-Spencer morphism is more explicit than elsewhere in the literature and includes a discussion at the level of coordinates.  To date, this does not appear elsewhere in the literature, but it is important for understanding the action of the differential operators at the level of coordinates (rather than just as abstract maps).  Also, this paper provides a user's guide to $q$-expansions and the ``Mumford object," a generalization of the Tate curve to the higher dimensional setting.  The prior literature \cite{la} on Mumford objects and algebraic $q$-expansions is at the level of existence statements.  Since our intended applications require a more explicit description of the Mumford object, we provide one here.  

\subsection{Motivation}

As mentioned above, our motivation for studying the differential operators comes from $L$-functions.  Through the doubling method, one can express special values of $L$-functions in terms of a finite sum of special values of Eisenstein series.   So if each of the finitely many terms in the sum is algebraic (or $p$-integral, or lies in a desired ring) up to a period, then the same holds for the special values of the $L$-function.  This is the approach taken in \cite{kaCM, SHL}.  Many constructions of $p$-adic $L$-functions rely on $p$-adic interpolation of special values of Eisenstein series \cite{serre, ka3, kaCM, pa, SHL}.  In the case of holomorphic Eisenstein series, the $p$-adic interpolation often takes place through $p$-adic interpolation of Fourier coefficients.  This approach is in general not sufficient, though, because of the following issue: most special values of $L$-functions come from non-holomorphic Eisenstein series, which do not have Fourier expansions.  This is closely related to the reason that the $L$-functions $L(s, f)$ in \cite{SHL} are only $p$-adically varied at a fixed point $s = s_0$.

%

For the case of the unitary groups $U(n,n)$, the $p$-adic differential operators in this paper can be used to solve this issue.\footnote{This paper generalizes almost immediately to the case of the symplectic groups $Sp(n)$, and it should be relatively straightforward to generalize to $U(m,n)$.  In the symplectic case, similar operators were also discussed in \cite{padiffops1, padiffops2}.}  These differential operators are a $p$-adic analogue of a class of $\ci$-differential operators first studied by H. Maass \cite{maass, mas} and later studied extensively by G. Shimura \cite{sh94b, shido, sh84, sh81, sh81, sh84, shar} and M. Harris \cite{ha86, hasv}.  (In the case of modular forms on the upper half plane, these $\ci$-differential operators are the widely used operators $g\mapsto y^{-k}(\frac{\partial}{\partial z})(y^k g)$ that map a weight $k$ modular form $g$ to a weight $k+2$ modular form.  More generally, they map a vector- or scalar-valued automorphic function to an automorphic function of a different weight.)  These $\ci$-differential operators play an important role in Shimura's proofs of algebraicity properties of Eisenstein series and $L$-functions.  Shimura's proofs, however, do not provide insight into $p$-adic properties.  

For Hilbert modular forms, N. Katz \cite{kaCM} reformulates Shimura's $\ci$-differential operators in terms of the Gauss-Manin connection and the Kodaira-Spencer isomorphism.  This algebraic-geometric approach is useful because it allows Katz to construct a $p$-adic analogue of the $\ci$-differential operators for Hilbert modular forms.  This paper generalizes \cite{kaCM} to the setting of automorphic forms on $U(n,n)$, including the more general case of {\it vector}-valued forms.  The author intends to apply these operators to construct some of the more general $p$-adic $L$-functions mentioned above.  The differential operators allow one to show that the values of a certain {\it $p$-adic} -- in general non-algebraic -- function at CM points over $\mathcal{O}_{\IC_p}$ are in fact not only algebraic but also the {\it same} as the values of a closely related {\it $\ci$} -- in general non-holomorphic -- function at CM points over $\mathcal{O}_{\IC_p}$.  So special values of pairs of seemingly unrelated functions (namely, a $\ci$ non-holomorphic function and $p$-adic function) are meaningfully compared and shown to be equal.  The possibility of constructing the $p$-adic $L$-functions relies upon this remarkable feature.

The starting point for my construction and proofs is \cite{kaCM}; however, the higher-dimensional, vector-valued situation is more complicated and involves several obstacles not encountered in the one-dimensional case considered in \cite{kaCM}.  My generalization involves a more delicate use of the Kodaira-Spencer morphism (which, for the unitary case, is no longer an isomorphism) than in Katz's situation.  Also, unlike in \cite{kaCM}, the action of the operator on $q$-expansions is no longer in terms of a derivation on a commutative ring, but rather a map (in general, not a derivation) on a ring that is in general non-commutative; I formulate the precise action of this map on coefficients of vector-valued $q$-expansions so that the description of the resulting coefficients can be used to study $p$-adic interpolation.  (Similarly, there are other instances in which a commutative ring in \cite{kaCM} is replaced with a non-commutative one in my situation.)  

\subsection{Organization of the paper}

Sections \ref{review} through \ref{padicandIg} cover background information, much of which is not available elsewhere in the literature.  In addition to being necessary for our discussion, these sections should be a useful reference for other researchers.  In Section \ref{review}, we review automorphic forms from several perspectives.  Section \ref{GMKS} discusses the Gauss-Manin connection and Kodaira-Spencer morphism in more detail than one finds elsewhere in the literature.  In particular, we give an explicit example, which is useful for the explicit description of the differential operators given later.  No similarly explicit examples are currently found in the literature.  In Section \ref{qexpnsection}, we discuss Fourier expansions and the algebraic theory of $q$-expansions.  We describe the ``Mumford object," the higher dimensional analogue of the Tate curve; this should also serve as a ``users' guide" for others seeking an explicit description of the Mumford Object.  In Section \ref{padicandIg}, we discuss $p$-adic automorphic forms on unitary groups, in a format analogous to the one in \cite{kaCM}.

Our discussion of the differential operators occurs in Sections \ref{ShimurasDiffOps} through \ref{cmsection}.  In Section \ref{ShimurasDiffOps}, we briefly review Shimura's definition of $\ci$-differential operators.  The material in Sections \ref{Kapuralg} through \ref{cmsection} builds upon the material in \cite{kaCM}.  In Section \ref{kapa}, we give an explicit formula for the action of the differential operators on vector-valued $p$-adic automorphic forms.  This differential operator, unlike the one in Katz, is not a derivation; indeed, it acts on a non-commutative ring.  We obtain a higher-dimensional, vector-valued analogue of Ramanujan's operator $q\frac{d}{dq}$.  

\subsection{Notation}

\label{notation}

Our setup is exactly the same as the setup in Sections 0 and 1 of \cite{SHL}, though our notation is not always the same.  When appropriate, we adopt the notation of \cite{shar}, \cite{hasv}, \cite{kaCM}, \cite{SHL}, and \cite{hida}.

Fix a quadratic imaginary extension $\cmfield$\index{$\cmfield$, cmfield} of $\IQ$, and let $\OK$\index{$\OK$} denote the ring of integers in $\cmfield$.  Fix a CM type $\cmtype$\index{$\cmtype$, cmtype} of $\cmfield$, i.e. an embedding
\begin{align}\label{kintoq}
\cmfield\hookrightarrow\IC. 
\end{align}
We associate $\cmfield$ with its image in $\IC$ under the embedding \eqref{kintoq}.  Let $\delta_{\cmfield}$ denote the discriminant of $\cmfield$.  Unless otherwise noted, we will always use $R_0$\index{$R_0$} to denote an $\OK$-algebra.  Let $\alpha$\index{$\alpha$} be a generator of $\OK$ over $\ZZ$. 

Let $\bar{\IQ}$ denote the algebraic closure of $\IQ$ in $\IC$, and write $\incl_{\infty}: \bar{\IQ}\hookrightarrow\IC$ to denote the given embedding of $\bar{\IQ}$ in $\IC$.  Fix a prime ideal $(p)$ in $\ZZ$ that splits completely in $K$.  Denote the finite adeles of $\IQ$ by $\adeles_f$, and write $\adeles_f^p$ to denote the restricted product $\prod'\IQ_l$ over finite primes $l\neq p$.  Let $\IC_p$ denote the completion of an algebraic closure of $\IQ_p$, and fix an embedding
$\incl_p: \bar{\IQ}\hookrightarrow\IC_p.$
Given an embedding
$\sigma: \cmfield\hookrightarrow\bar{\IQ},$
we write $\sigma_{\infty}$ to denote the embedding
$\incl_{\infty}\circ\sigma: \cmfield\hookrightarrow\IC,$
and we write $\sigma_p$ to denote the embedding
$\incl_p\circ\sigma: \cmfield\hookrightarrow\IC_p.$
We write $\bar{\sigma}$ to denote the composition of the embedding $\sigma$ with complex conjugation.

We now set some notation for modules.  For any module $M$, we denote $\sum_{e=0}^\infty M^{\otimes e}$ by $T(M)$.  From now on, for any module $M$, we associate $\sym(M) = \sum_e \sym^eM$
 with its image in 
$T(M) = \sum_e T^e(M)$
via
\begin{align}
\sym^e(M)&\hookrightarrow M^{\otimes e}\label{syminc}\\
x_1\cdot \cdots \cdot x_e &\mapsto \sum_{s\in S_e} x_{s(1)}\otimes\cdots\otimes x_{s(e)}\nonumber,
\end{align}
where $S_e$ is the group of permutations of $1, \ldots, e$.  Let $r$ be a positive integer, and let $V$ be a vector space containing vectors $v_i$ indexed by a subscript $i$.  For any $r$-tuple $\lambda = (\lambda_1, \ldots, \lambda_r)$ of integers, let $v_{\lambda}$ denote the tensor product $v_{\lambda_1}\otimes\cdots \otimes v_{\lambda_r}$ of $r$ vectors $v_{\lambda_1}, \ldots, v_{\lambda_r}$.  Denote the symmetric product $v_1^{\lambda_1}\cdots v_n^{\lambda_n}$  by $v^{\lambda}.$  We write $\rhost$ or simply $st$ to denote the standard representation of $GL(V)$ on a vector space $V$.

In parts of the paper dealing with a complex analytic approach, we primarily use Shimura's notation (used throughout his papers, e.g. as discussed in the Notation and Terminology section of \cite{shar}).  We review some of it here.  For a ring $R$ and positive integers $r$ and $c$, we write $R^r_c$\index{$R^r_c$} to denote the $R$-module of $r\times c$-matrices with entries in $R$, i.e. a matrix with $r$ rows and $c$ columns.  When necessary to distinguish between column and row vectors, we shall take advantage of this notation.  For a matrix $z$ with entries in $\IC$, we write ${^tz}$\index{${^tz}$} to denote the transpose of $z$ and $z\s$\index{$z\s$, z$\backslash$s} to denote the complex conjugation ${^t\bar{z}}$ of ${^tz}$.  

Throughout the paper, fix a positive integer $n$\index{$n$} and set 
$g\index{$g$} = 2n.$
We write $1_n$\index{$1_n$} to mean the $n\times n$ identity matrix.  As in \cite{shar}, let $\hn\index{$\hn$, hn} =  \{z\in \IC^n_n\mid i(z^* - z)>0\}$
and
$\eta_n\index{$\etan$, etan}  =  \left( \begin{array}{cc}
 0 &  -1_n\\
1_n & 0 \end{array}\right).$
The following notation will also be helpful.  If $A$ is a matrix, let $A^+$ denote $A$, and let $A^-$ denote $^tA$.  Given a subgroup $G$ of $GL_n(\IR)$, let $G^+$ denote the subgroup of $G$ consisting of elements of positive determinant.

We now set some conventions for schemes.  For any morphism of schemes $\pi: Y\rightarrow Z$, let $(\Omega^\bullet_{Y/Z}, d)$ denote the complex of sheaves of relative differentials on $Y/Z$ (where $d$ is the usual differentiation map).  The de Rham cohomology $H^i_{DR}(Y/Z)$ is defined to be the hypercohomology $\mathbb{R}^i\pi_*(\Omega_{Y/Z}^\bullet).$  Given a scheme $X$ over a scheme $S$ and a scheme $T$ over $S$, we denote by $X_T$ the scheme $X\times_S T$.  When working with a separated scheme $S$ of finite type over $\IC$, we write $S^{\an}$ to denote the associated complex analytic space.  We then write $\mathcal{O}_S^{\hol}$ or $\mathcal{O}_S(\hol)$ (resp. $\mathcal{O}_S^{\ci}$ or $\mathcal{O}_S(\ci)$) to denote the sheaf of holomorphic (resp. $\ci$) functions on $S^{\an}$.

%


%

\begin{center}{\textsc{Acknowledgments}}\end{center}
I would like to thank Christopher Skinner for many useful conversations and insightful suggestions.  I am also grateful to Michael Harris for helpful advice.  Nicholas Ramsey provided many stylistic comments on earlier drafts of this paper.   This research was supported by a Bell Labs Graduate Research Fellowship through the Lucent Foundation.

\section{Certain abelian varieties of PEL type and automorphic forms}
\label{review}



In this section, following the perspectives of  \cite{shar}, \cite{kaCM}, and \cite{hida}, we discuss automorphic forms on the unitary groups $U(n,n)$ and certain abelian varieties with PEL structure.

\subsection{Unitary groups}
In order to discuss automorphic forms on unitary groups and abelian varieties of PEL type, we need first to establish conventions for unitary groups.  In this subsection, we recall the notation and conventions concerning unitary groups given in Section (0.1) of \cite{SHL}; all the material in Section (0.1) of \cite{SHL} applies to our situation.  Let $V$ be an $n$-dimensional vector space over $\cmfield$, and let $\hermpair_V$ be a non-degenerate hermitian pairing on $V$ relative to the extension $\cmfield/\IQ$.   We write $-V$ to denote the vector space $V$ over $\cmfield$ with hermitian pairing $\hermpair_{-V}$ defined by
$\hermpair_{-V} = -\hermpair_V.$
We write $2V$ to denote the $\cmfield$-vector space $V\oplus V$ with the hermitian pairing
$\hermpair_{2V}$
defined by
\begin{align*}
\langle(v_1, v_2), (w_1, w_2)\rangle_{2V} = \langle v_1, w_1\rangle_V + \langle v_2, w_2\rangle_{-V}\\
( = \langle v_1, w_1\rangle_V - \langle v_2, w_2\rangle_{V})
\end{align*}                                                                                                                                                                                                                                                                                                                                                                                                                                                                                                                                                                                                                                                                                              
for all vectors $v_1, v_2, w_1, w_2$ in $V$.

The Hermitian pairing $\hermpair_{V}$ defines an involution $c$ on $\End(V)$ via
$\langle gv, v'\rangle = \langle v, c(g)v'\rangle$
for all $g$ in $\End(V)$ and $v, v'\in V$.  Note that for any $\IQ$-algebra $R$, the involution $c$ extends to an involution of $V\otimes_{\IQ}R$.

For any vector space $W$ with hermitian pairing $\hermpair_W$ and $\IQ$-algebra $R$, we define the following unitary groups over $\IQ$:
\begin{align*}
&U(W)(R) = U(W, \hermpair_W)(R)\\
&= \left\{g\in GL\left(W\otimes_{\IQ}R\right)\mid \langle gv, gv'\rangle = \langle v, v'\rangle, \mbox{ for all } v, v'\in W\right\}\\
&GU(W)(R)  = GU(W, \hermpair_W)(R)\\
& =  \left\{g\in GL\left(W\otimes_{\IQ}R\right)\mid \mbox{ for all } v, v'\in W, \langle gv, gv'\rangle = 
\nu(g)\langle v, v'\rangle \mbox{ with } \nu(g)\in R^{\times} \right\}.
\end{align*}
Then
\begin{align*}
U(2V)(R) &\cong U(n, n)(R) = \left\{g\in GL_{2n}(K\otimes_{\IQ} R)\mid g\eta_n g\s = \eta_n\right\}\\
GU(2V)(R) &\cong GU(n,n)(R) =\left\{g\in GL_{2n}(K\otimes_{\IQ} R)\mid g\eta_n g\s = \nu(g)\eta_n\mbox{ some } \nu(g)\in R^{\times} \right\}. 
\end{align*}

\subsection{Certain abelian varieties of PEL type}\label{seczero}

In this subsection, we review certain abelian varieties of PEL type.  

Our situation is similar to the setup in Sections 1.2 through 1.4 of \cite{SHL}.  We review here the most important features of the setup in \cite{SHL}, following \cite{SHL} closely.  Our notation in this section is not entirely the same as the notation in \cite{SHL}.  (For details on the material covered in this section, the reader may also find it helpful to look at chapters 1 and 2 of \cite{la} and at chapters 6 and 8 of \cite{milne}.)


Let $G = GU(V).$  Fix a compact open subgroup $\cpctopen = \cpctopen_p\times\cpctopen^p$\index{$\cpctopen$, cpctopen} of $GU(V)(\adeles_f)$, with $\cpctopen_p\subseteq G(\IQ_p)$ a hyperspecial maximal open compact subgroup of $G(\IQ_p)$ and $K^p$ in $G(\adeles_f^p)$.  (In applications, the maximal compacts of interest will be those used in \cite{SHL}.)  We recall the functor $\shfunctor$\index{$\shfunctor$, shfunctor} from schemes $S$ over $\IQ$ to the category of sets that is given in (1.3.1) of \cite{SHL}:
\begin{align}\label{peltuple}
S\mapsto \{\peltup\}
\end{align}
where
\begin{itemize}
\item{$A$ is an abelian scheme over $S$, up to isogeny}
\item{$\lambda: A\rightarrow A\dual$ is a polarization}
\item{$\iota: \cmfield\rightarrow \End_S(A)\otimes \IQ$ is an embedding of $\IQ$-algebras}
\item{$\alpha: V\otimes\adeles_f\isomto \prod_lT_l(A)\otimes\IQ$ is an isomorphism of $\cmfield$-spaces, modulo the action of $\cpctopen$.}
\end{itemize}
The above data are required to satisfy the Rosati condition, i.e. the following diagram commutes:
\begin{align*}
\xymatrix{
 A\ar[r]^{\lambda}\ar[d]_{\iota(\bar{a})}& A\dual\ar[d]^{\iota\dual(a)}\\
 A \ar[r]_{\lambda} & A\dual.
 }
\end{align*}
Furthermore, the isomorphism $\alpha$ must identify the Hermitian pairing on $V$ with a multiple of the one coming from the Weil pairing associated to $\lambda$.  We call the tuples $\peltup$ abelian varieties of PEL type.

Given a vector space $W$ over $\cmfield$ with a non-degenerate hermitian pairing on $W$, one can canonically associate to the group $G = GU(W)$ a Shimura datum $(G, X)$ and a Shimura variety $Sh(W)=Sh(G, X)$.  The complex-valued points of $Sh(G,X)$ are given by
$Sh(G, X)(\IC) = \lim_K G(\IQ)\backslash X\times G(\adeles_f)/\cpctopen,$
where the limit is over all open compact subgroups of $G(\adeles_f)$.  Let 
$_KSh = _{\cpctopen}Sh(V) = _{\cpctopen}Sh(G, X)$
 be the variety whose complex points are given by $G(\IQ)\backslash X\times G(\adeles_f)/\cpctopen$; the complex points of this variety classify complex abelian varieties satisfying the above moduli problem.  If $U$ is a compact closed subgroup of $GU(\adeles_f)$, we use the notation $_USh(V)$ to mean the tower of the varieties $_KSh = _KSh(V)$ with $K\subset U$ a compact open.

We remind the reader of Theorem (1.3.2) in \cite{SHL}, which is originally due to Shimura:
\begin{thm}Whenever $\cpctopen$ is sufficiently small (``neat", in the sense of \cite{la}, suffices), the functor $\shfunctor$ is representable by a quasi-projective scheme $_K\moduli$ over $\IQ$.  The scheme $_K\moduli$ is the canonical model for $_KSh(V)$.  As $\cpctopen$ varies, the natural maps between the above functors induce the natural maps between the varieties $_KSh(V)$.  The action of $GU(V)(\adeles_f)$ on $_KSh(V)$ preserves the $\IQ$-rational structure.
\end{thm}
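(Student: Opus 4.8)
The plan is to reproduce Shimura's argument (in the form given in \cite{SHL}, with background in \cite{la} and \cite{milne}) in four stages: representability, quasi-projectivity, the canonical-model property, and functoriality in $\cpctopen$ together with the Hecke action.

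\emph{Representability and quasi-projectivity.} Since $\cpctopen$ is neat, no nontrivial automorphism of a tuple $\peltup$ fixes $\alpha$ modulo $\cpctopen$, so $\shfunctor$ is a functor to sets with trivial automorphism groups and it suffices to rigidify and descend. Choose $N$ so that $\cpctopen$ contains the principal level-$N$ subgroup; after passing, if necessary, from isogeny classes to isomorphism classes by fixing an $\OK$-lattice in $V$, there is a natural transformation from $\shfunctor$ to the moduli functor of principally polarized abelian schemes of relative dimension $\dim_{\IQ}V$ with full level-$N$ structure, which is representable by a quasi-projective $\ZZ[1/N]$-scheme by Mumford's geometric invariant theory. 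The remaining PEL data — the embedding $\iota$ subject to the condition on $\lie(A)$ imposed by the Shimura datum (the signature/determinant condition), the Rosati compatibility of $\lambda$ with the involution $c$ on $\End(V)$, and the requirement that $\alpha$ carry $\hermpair_V$ to a scalar multiple of the $\lambda$-Weil pairing — cut out a locally closed subscheme. Its fiber over $\IQ$ (the reflex field $E(V)$ being $\IQ$ here) is a quasi-projective $\IQ$-scheme $_K\moduli$ representing $\shfunctor$, quasi-projectivity being inherited from the ambient Siegel moduli scheme.

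\emph{Canonical model.} Next I would identify $_K\moduli$ with the canonical model of $_KSh(V)$. By Shimura's (equivalently Deligne's) characterization, it is enough to check that the special points of $_KSh(V)$ — the points fixed by a maximal $\IQ$-torus $T\subset G$, that is, the CM tuples $\peltup$ — lie in the expected abelian extensions and that $\mathrm{Gal}(\bar{\IQ}/\IQ)$ acts on them through the reciprocity morphism prescribed by the theory of canonical models (built from the reflex norm of $T$). I would feed in the Shimura--Taniyama main theorem of complex multiplication, which describes the Galois action on a CM abelian variety, its polarization, and its torsion points in terms of the reflex norm, and compare it, term by term, with the Galois action on the point of $_K\moduli$ that classifies the tuple. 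This verification — keeping track of the reflex field (trivial here because we have normalized the totally real subfield to $\IQ$) and of the normalization of the reciprocity map — is the step I expect to be the main obstacle. Once it is in hand, density of CM points in $_KSh(V)$ gives uniqueness of the canonical model and pins it down as $_K\moduli$.

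\emph{Tower and Hecke action.} These are formal consequences of the moduli interpretation. For $\cpctopen'\subseteq\cpctopen$, reducing a level-$\cpctopen'$ structure modulo $\cpctopen$ is a natural transformation between the corresponding functors, hence induces a morphism ${}_{\cpctopen'}\moduli\to{}_{\cpctopen}\moduli$ of $\IQ$-schemes — finite étale when $\cpctopen'$ is normal in $\cpctopen$ — whose effect on complex points is exactly the natural projection of the $_KSh(V)$; uniqueness of canonical models forces these to be the canonical maps. An element $g\in GU(V)(\adeles_f)$ acts on the moduli problem by $\alpha\mapsto\alpha\circ g$, an isomorphism onto the functor for $g^{-1}\cpctopen g$ described purely moduli-theoretically, hence an isomorphism of the representing schemes defined over $\IQ$; passing to the tower, the resulting $GU(V)(\adeles_f)$-action preserves the $\IQ$-rational structure. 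This completes the proposed proof.
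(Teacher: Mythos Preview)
The paper does not prove this theorem at all: it is stated as background, introduced with ``We remind the reader of Theorem (1.3.2) in \cite{SHL}, which is originally due to Shimura,'' and no argument is given. So there is nothing in the paper to compare your proposal against.

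That said, your sketch is a reasonable outline of the standard proof (embedding into a Siegel moduli space \`a la Kottwitz for representability and quasi-projectivity, Deligne's characterization of canonical models via the reciprocity law at special points and Shimura--Taniyama for the verification, and the moduli-theoretic description of the tower and Hecke correspondences). If you actually intend to include a proof, you should be aware that the canonical-model step is substantial and not something one writes out in a paragraph; the paper's stance --- citing \cite{SHL}, \cite{la}, \cite{milne}, and ultimately Shimura and Deligne --- is the usual one. For the purposes of this paper, the appropriate ``proof'' is simply the citation.
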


Now we consider a similar but slightly different moduli problem that is discussed in Section (1.6) of \cite{SHL}; it will be useful for the $p$-adic theory, which we will discuss later.  Fix a sufficiently small compact open subgroup $\cpctopen = \cpctopen_p\times \cpctopen^p\subset G(\adeles_f)$ as above.
Consider the above moduli problem with $\iota$ replaced by an injection
$(\OK)_{(p)}\hookrightarrow \End_S(A)\otimes\ZZ_{(p)}.$
As explained in \cite{kottwitz}, this moduli problem is represented by a smooth integral scheme $\ptopmoduli$ over $\ZZ_{(p)}$, which is a smooth integral model for $_KSh(G,X)$.  This moduli problem is closely related to the moduli problem we now describe.  Let $_{\cpctopen^p}A^p$ be the functor $S\mapsto\{(A, \lambda, \iota, \alpha^p)\}$ with $A$ an abelian scheme over $S$ up to prime-to-$p$ isogeny, $\lambda$ a polarization of degree prime to $p$, $\iota: \left({\OK}\right)_{(p)}\rightarrow \End_S(A)\otimes\ZZ_{(p)}$ an embedding of $\ZZ_{(p)}$-algebras, and $\alpha^p: V(\adeles^p_f)\isomto V^{f, p}(A)$ a prime-to-$p$ ${\left(\OK\right)}_{(p)}$-linear level structure modulo $\cpctopen^p$.  This functor is representable over $\ZZ_{p}$ by a scheme also denoted $_K\mathbb{S}(G,X)$.  The forgetful map gives an isomorphism $\shfunctor\isomto_{K^p}A^p$.

We denote by $\moduli$\index{$\moduli$, moduli} a moduli space over an $\OK$-algebra in the situation where we want to remain ambiguous about the level structure or any other details about the moduli problem; we also use this notation when it is clear from context which moduli space we mean.  We write $\univav$\index{$\univav$, univav} to denote the universal abelian variety over $\moduli$:
\begin{align}\nonumber
\xymatrix{
A_{univ}\ar[d]^{\pi}\\
 \moduli.
 }
\end{align}

We define 
\begin{align}
\uo\index{$\uo$, uo}:=\pi_*(\Omega_{A_{univ}/\moduli})\nonumber\\
\dr\index{$\dr$, dr}:=\dr(A_{univ}/\moduli).\nonumber
\end{align}

Note that we will {\it always} take $\moduli$ to be over an $\OK$-algebra.
Working over $\OK$ affords us the following convenient splittings.  The embedding
$\iota: \cmfield\rightarrow \End_S(A)\otimes \IQ$
makes $\uo$ and $\dr$ into $\OK$-modules through the action defined by
$a\cdot v = \iota(a)\s(v)$
for each $a\in \OK$.
Similarly, since $A$ lies over an $\OK$-scheme $S$, the action on $\uo$ and $\dr$ induced by the composition of morphisms of structure sheaves
$\OK\rightarrow\mathcal{O}_S\rightarrow\mathcal{O}_A$
also makes $\uo$ and $\dr$ into $\OK$-modules.  The isomorphism
\begin{align*}
\OK\otimes_{\IQ}\OK&\isomto \OK\oplus\OK\\
a\otimes b &\mapsto (ab, \bar{a}b)
\end{align*}
extends to an isomorphism
\begin{align}\label{oksp}
\OK\otimes_{\IQ}\mathcal{O}_S&\isomto \mathcal{O}_S\oplus\mathcal{O}_S\\
a\otimes b&\mapsto (ab, \bar{a}b).\nonumber
\end{align}
So there is a splitting over $\mathcal{O}_S$
$\uo = \uo^+\oplus \uo^-,$
where $\uo^+$ is the $\mathcal{O}_S$-module defined by
$\uo^+ := \{w\in\uo \vert \iota(a)^*w = aw, a\in\OK\}$
and $\uo^-$ is given by
$\uo^- := \{w\in\uo \vert \iota(a)^*w = \bar{a}w, a\in\OK\}.$
There is a similarly defined splitting
$\dr = \dr^+\oplus \dr^-$.  We shall sometimes denote $\dr^\pm$ by $H^{\pm}$.


\subsection{The complex analytic viewpoint (Shimura's perspective)}\label{reviewsym}

In this section, for a fixed open compact $\cpctopen$, let $\Gamma$ be the congruence subgroup of $GU(\eta_n)$ defined by
$\Gamma = \cpctopen\cap G(\IQ).$

\subsubsection{Transcendental description of abelian varieties of PEL type}\label{trasec}

In this section, we follow the approach of Shimura in \cite{shar} and \cite{shab}.
Let 
$\underline{A} = (A, \lambda, \iota, \alpha)$
be a complex abelian variety of PEL type.  Then $A^{\an}$ is a complex torus $\IC^{2n}/\mathcal{L}$, for some $\ZZ$-lattice $\mathcal{L}$ in $\IC^{2n}$,
which can be obtained as follows.  Let $\omp$ be a basis for $\uo_{A/\IC}^+$, and let $\omm$ be a basis for $\uo_{A/\IC}^-$.  Then, we define the $\ZZ$-lattice $L(A, \omp, \omm)$ to be
\begin{align*}
L(A, \omp, \omm) = \left\{\left( \begin{array}{cc}
\int_{\gamma}\omega_1^- \\
\vdots\\
\int_{\gamma}\omega_n^-\\
\int_{\gamma}\omega_1^+\\
\vdots\\
\int_{\gamma}\omega_n^+ \end{array}\right) \middle\vert \gamma\in H_1(A, \ZZ) \right\}.
\end{align*}
The complex abelian variety $A^{\an}$ is isomorphic to $\IC^g/L(A, \omp, \omm)$.  The polarization $\lambda$ on $A$ corresponds to a Riemann form on $L(A, \omp, \omm)$.  The morphism
$\iota: \cmfield\hookrightarrow \End_{\IQ}(A)$
corresponds to the action $\iota$ of $\cmfield$ on $L(A, \omp, \omm)$ given by
$\iota(a) v = \Psi(a)\cdot v$
for all $a$ in $\cmfield$ and $v$ in $L(A, \omp, \omm)$, where
$\Psi: \cmfield\rightarrow\IC^{2n}_{2n}$
is given by
$a\mapsto diag[\bar{a}\cdot 1_n, a\cdot 1_n].$
The level structure
$\alpha: V(\adeles_f)\isomto V^f(A) \mod \cpctopen$
corresponds to a map
$V(\adeles_f)\isomto L(A, \omp, \omm)\otimes \adeles_f \mod \cpctopen,$
with a compatibility of pairings as above.

\subsubsection{Families of complex abelian varieties of PEL type}

We now recall Shimura's construction (Section 4 of \cite{shar}) of some families of complex abelian varieties of PEL type.  Throughout this section, fix a $\ZZ$-lattice $L$ in $\cmfield^1_{2n}$.

For each $z\in \hn$ and each row vector $x$ in $\cmfield^1_{2n}$, let $p_z(x)$ be the vector in $\IC^{2n}$ defined by
\begin{align*}
p_z(x) = \lp [z\hspace{0.4cm} 1_n]x\s,\hspace{0.3cm} [{^tz}\hspace{0.4cm} 1_n]\cdot {^tx} \rp.
\end{align*}
The function $p_z(x)$ is holomorphic in $z$.

We define
$\mathcal{L}_L(z) = p_z(L).$
Then $p_z(L)$ is a lattice in $\IC^g$.  Let $A_z$ be the complex torus defined by
$A_z = \IC^{2n}/p_z(L).$
Let $C_z$ be the polarization on $A_z$ given by the Riemann form $E_z$ defined by
$E_z\lp p_z(x), p_z(y)\rp = \tr_{(\cmfield\otimes_{\IQ}\IR)/\IR}\lp x\eta_n y\s\rp.$  Every complex-valued point $(A,\lambda, \iota, \alpha)$ of the Shimura variety $_KSh(V)$ is isomorphic for some $z$ to $A_z$ with polarization $C_z$ and action of $\cmfield$ given by
$\iota_z(a) \cdot v = diag[\bar{a}\cdot 1_n, a\cdot 1_n]\cdot v$.
To specify a finite ordered set of points $t_1(z)\ldots, t_s(z)$ of finite order on $A_z$, it is equivalent to specify a finite set of elements $u_1, \ldots, u_s$ in $L\otimes\IQ/L$ such that
$t_i(z) = p_z(u_i).$

We conclude this section by giving a more explicit classification of analytic families of complex abelian varieties of PEL type.  Fix a finite set of points $\{u_1, \ldots, u_s\}$ in $\cmfield^1_{2n}$.
Consider the quintuple
\begin{align}\label{omega}
\Omega = \left\{\cmfield, \Psi, L, \eta_n, \{u_i\}_{i=1}^s\right\}.
\end{align}
Such a quintuple is called a PEL-type.  Let
$P = (A, C, \iota; \{t_i\}_{i=1}^s)$
be a tuple consisting of a complex abelian variety $A$, a polarization $C$, a set of points $t_1, \ldots, t_s$ of finite order on $A$, and a ring injection
$\iota: \cmfield\hookrightarrow \End_{\IQ}(A)$
that is stable under the involution of $\End_{\IQ}(A)$ determined by $C$.

We say that $P$ is of type $\Omega$ if the following holds: there is a $\ZZ$-lattice $\Lambda$; a homomorphism $\xi: \IC^g\rightarrow A$; an $\IR$-linear isomorphism
$q: \IC^1_g\rightarrow\IC^g$
such that 
$q(ax) = \Psi(a)q(x)$ and
$\iota(a)\circ\xi = \xi\circ\Psi(a)$
for all $a\in \cmfield$ and $x\in \IC^1_g$ and such that $q(u_i) = t_i$ for each $i$; a Riemann form $E$ determined by $C$ that satisfies
$E(q(x), q(y)) = \tr_{K/\IQ}(x\eta_n y\s);$
and a commutative diagram (Figure (4.3) in \cite{shar})
\begin{align*}
\xymatrix{
0\ar[r]& L\ar[r]\ar[d]& \IC^1_g\ar[r]\ar[d]^q& \IC^1_g/L\ar[r]\ar[d]&0\\
0\ar[r]& \Lambda\ar[r]&\IC^g\ar[r]^{\xi}&A\ar[r]&0.
 }
\end{align*}
The classification of complex abelian varieties of type $\Omega$, which we now make precise, will be useful for understanding how the classical (analytic) definition of automorphic forms motivates the algebraic-geometric definition of automorphic forms.  For each $z$ in $\hn$, let
$P_z = (A_z, C_z, \iota_z; \{t_i(z)\}_{i=1}^s)$
with $A_z$, $C_z$, $\iota_z$, and $\{t_i(z)\}_{i=1}^s$ defined as above.  Then, $P_z$ is of type $\Omega$ for each $z$ in $\hn$.  Furthermore, Theorem \ref{thmfe} (Theorem 4.8 in \cite{shar}) classifies abelian varieties of type $\Omega$.
An element
$\alpha=\left( \begin{array}{cc}
 A &  B\\
C & D \end{array}\right)\in U(\eta_n)$
acts on $\hn$ by
$\alpha z = (Az+B)(Cz+D)^{-1}.$
\begin{thm}\label{thmfe}
The tuple $P_z$ is of type $\Omega$ for each $z$ in $\hn$, and every tuple of type $\Omega$ is isomorphic to $P_z$ for some $z$ in $\hn$.  Tuples $P_z$ and $P_w$ are isomorphic if and only if there is an element $\gamma$ in the group
$\Gamma = \{\alpha\in U(\eta_n)\vert L\alpha = L \mbox{ and } u_i\alpha - u_i\in L \mbox{ for each } i\}$
that satisfies
$w=\gamma z.$
\end{thm}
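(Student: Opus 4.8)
The plan is to reduce the whole statement to the structure of the $\IR$-linear uniformization (period) map attached to a tuple of type $\Omega$. The first assertion --- that $P_z$ is of type $\Omega$ for every $z\in\hn$ --- is a direct unwinding of the definition: take $\Lambda = p_z(L) = \mathcal{L}_L(z)$, $\xi$ the quotient $\IC^g\to A_z$, $q = p_z$, and $E = E_z$, and check straight from the formula for $p_z$ that $p_z(ax) = \Psi(a)p_z(x)$, that $\iota_z(a)$ is induced by $\Psi(a)$, that $p_z(u_i)$ represents $t_i(z)$, and that the required commutative diagram holds. The only place $z\in\hn$ (rather than merely $z\in\IC^n_n$) enters is in checking that $E_z$ is a \emph{Riemann} form for the standard complex structure on $\IC^g$, i.e. that its associated Hermitian form is positive definite; this condition translates precisely into $i(z\s - z) > 0$.

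For the surjectivity of $z\mapsto P_z$, let $P = (A,C,\iota;\{t_i\})$ be of type $\Omega$ and fix auxiliary data $(\Lambda,\xi,q,E)$ as in the definition. Since $\Psi(a) = \mathrm{diag}[\bar a\,1_n, a\,1_n]$ and $a\neq\bar a$ for $a\in\cmfield\setminus\IQ$, the $\IC$-linear automorphisms of $\IC^g$ commuting with all $\Psi(a)$ are exactly the block-diagonal ones, i.e. $\glnc\times\glnc$; moreover $q$ splits as $(q^-,q^+)$ with $q^-\colon\IC^1_g\to\IC^n$ conjugate-$\IC$-linear and $q^+\colon\IC^1_g\to\IC^n$ $\IC$-linear, each given by an $n\times 2n$ complex matrix acting on ${}^t\bar x$, resp. on ${}^tx$. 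The next step --- and the technical heart --- is to show that compatibility of the pairing $x\eta_n y\s = E(q(x),q(y))$ with the complex structure, together with positivity of the associated Hermitian form, forces the right-hand $n\times n$ blocks of these matrices to be invertible; normalizing them to $1_n$ by the unique $(g_1,g_2)\in\glnc\times\glnc$ that does so puts $q$ into the shape $p_z$ for a unique $z\in\IC^n_n$, and the positivity condition then reads $i(z\s - z)>0$, so $z\in\hn$. The defining diagram forces $\Lambda = q(L)$, hence $(g_1,g_2)$ carries $\Lambda$ onto $p_z(L) = \mathcal{L}_L(z)$; being $\Psi$-equivariant, carrying $E$ to $E_z$, and carrying $q(u_i)$ to $p_z(u_i)$, it descends to an isomorphism $P\isomto P_z$.

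For the isomorphism criterion, an isomorphism $\phi\colon P_z\isomto P_w$ lifts to a $\IC$-linear isomorphism $\tilde\phi\colon\IC^g\to\IC^g$ that commutes with $\Psi$ (so $\tilde\phi\in\glnc\times\glnc$), carries $p_z(L)$ onto $p_w(L)$, pulls $E_w$ back to $E_z$, and carries $t_i(z)$ to $t_i(w)$. The composite $\rho := p_w^{-1}\circ\tilde\phi\circ p_z\colon\IC^1_g\to\IC^1_g$ is $\IR$-linear, commutes with the scalar action of $\cmfield$, and carries $L$ onto $L$; hence it is $\IQ$-linear on $\cmfield^1_{2n}$, commutes with $\cmfield$-scalars there, and is therefore right multiplication $\rho_\gamma$ by an invertible $\gamma\in GL_{2n}(\cmfield)$ with $L\gamma = L$. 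Reading off the two components of $\tilde\phi\circ p_z = p_w\circ\rho_\gamma$ yields $w = \gamma z$ for the action of $\gamma$ on $\hn$ (possibly after replacing $\gamma$ by $\gamma^{-1}$, which lies in $\Gamma$ as well); the identity $\tr(x\eta_n y\s) = \tr\left((x\gamma)\eta_n(y\gamma)\s\right) = \tr(x\,\gamma\eta_n\gamma\s\,y\s)$ for all $x,y$ forces $\gamma\eta_n\gamma\s = \eta_n$, i.e. $\gamma\in U(\eta_n)$; and carrying $t_i(z)$ to $t_i(w)$ modulo $p_w(L)$ forces $u_i\gamma - u_i\in L$ for each $i$. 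Thus $\gamma\in\Gamma$ and $w=\gamma z$. Conversely, given $\gamma\in\Gamma$ with $w=\gamma z$, the map $\tilde\phi := p_w\circ\rho_\gamma\circ p_z^{-1}$ lies in $\glnc\times\glnc$ and, by the same computations run in reverse, descends to an isomorphism $P_z\isomto P_w$.

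The step I expect to be the main obstacle is the normalization in the second paragraph: showing that positivity of the Riemann form is exactly what (a) makes the relevant blocks invertible, so that the reduction to the standard shape $p_z$ is possible at all, and (b) is equivalent to $i(z\s - z)>0$. For a general $\Psi$-semilinear real-linear isomorphism neither holds, so positivity is genuinely doing the work; this is precisely the computation carried out by Shimura in Section~4 of \cite{shar} (Theorem~4.8), and the plan is to follow it, keeping careful track of the row/column and complex-conjugation conventions fixed in the Notation section --- these conventions also dictate which of $\gamma$ or $\gamma^{-1}$ appears in $w=\gamma z$.
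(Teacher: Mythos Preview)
The paper does not supply its own proof of this theorem: it is stated as Theorem~4.8 of \cite{shar} and simply cited. Your sketch is therefore not being compared to anything in the paper itself, but rather to Shimura's original argument --- which you explicitly acknowledge following in your final paragraph.

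That said, your outline is a faithful account of how Shimura's proof goes: verify directly that $P_z$ has type $\Omega$; for surjectivity, use the $\Psi$-equivariance to decompose $q$ into its $(\bar\cmfield$-linear, $\cmfield$-linear$)$ pieces, normalize by the free $GL_n(\IC)\times GL_n(\IC)$-action to reach the shape $p_z$, and read off $z\in\hn$ from positivity of the Riemann form; for the isomorphism criterion, lift an isomorphism to $\IC^g$, conjugate through $p_z$ and $p_w$ to obtain right multiplication by some $\gamma\in GL_{2n}(\cmfield)$, and then extract the conditions $L\gamma=L$, $\gamma\eta_n\gamma\s=\eta_n$, and $u_i\gamma-u_i\in L$ from compatibility with the lattice, the form, and the marked points. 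The only place to be genuinely careful is exactly where you flag it: the invertibility of the right-hand $n\times n$ blocks is not automatic from $\IR$-linearity and $\Psi$-equivariance alone, and one needs the positivity of the Hermitian form to rule out degeneracy. Shimura handles this in \cite{shar}, and the paper under review is content to point there.
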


\begin{rmk}
Taking $L$ to be the lattice in $\cmfield^1_{2n}$ generated by the standard basis vectors $e_1, \ldots, e_{2n}$ and the vectors $\alpha\cdot e_1, \ldots, \alpha\cdot e_{2n}$ (with $\alpha$ a generator of $\cmfield$ over $\IQ$), we see that there is an analytic family of abelian varieties $\univav$\index{$\univav$, univav} over $\hn$ such that the fiber of $\A$ over each point $z = (z_{ij})$ in $\hn$ is the abelian variety 
$\az\index{$\az$, az}:=\IC^n/\lz,$
where $L_z$ is the $\ZZ$-lattice in $\IC^{2n}$ generated by:
\begin{align}\label{latticebasis1}
z_j\index{$\zj$, zj} & =  (z_{1j}, \ldots, z_{nj}, z_{j1}, \ldots, z_{jn}), \hspace{0.5cm}\\
e_j\index{$\ej$, ej} & = \mbox{vector with 1 in the $j$-th and $j+n$-th positions and zeroes everywhere else}\hspace{0.5cm} \\
z_j'\index{$\zjp$, zjp} & =  (\alpha^*z_{1j}, \ldots, \alpha^*z_{nj}, \alpha z_{j1}, \ldots, \alpha z_{jn})\hspace{0.5cm} \\
e_j'\index{$\ejp$, ejp} & =  \mbox{vector with $\alpha^*$ in $j$-th position, $\alpha$ in $j+n$-th position, and zeroes everywhere else},\label{latticebasis2}
\end{align}
with $j =1, \ldots,n$.
We will work with this family of abelian varieties in examples in future sections.
\end{rmk}

\subsubsection{Complex analytic automorphic forms}\label{cxanaut}
In this section, we recall the classical definition of automorphic forms over $\IC$, following \cite{shar}.

For $\alpha=\left( \begin{array}{cc}
 A &  B\\
C & D \end{array}\right)\in U(\eta_n)$ and $z\in\hn$, we define
$M_{\alpha}(z) = M(\alpha, z) = (\mu(\alpha, z), \lambda(\alpha, z)),$
where
$\mu(\alpha, z) = Cz+D$ and $\lambda(\alpha, z) = \bar{C}\cdot {^tz}+\bar{D}.$
Let $X$ be a finite-dimensional vector space, and fix a representation
$\omega: GL_n(\IC)\times GL_n(\IC)\rightarrow GL(X).$
For any map
$f: \hn\rightarrow X$
and $\alpha\in U(\eta_n)$, define
$f\Vert_{\omega}\alpha:\hn\rightarrow X$
by
$\left(f\middle\Vert_{\omega}\alpha\right)\left(z\right) = \omega\left(M_{\alpha}\left(z\right)\right)^{-1} f(\alpha z).$

Let $\Gamma$ be a congruence subgroup of $U(\eta_n)$.  A (holomorphic) automorphic form of weight $\omega$ with respect to $\Gamma$ is a holomorphic function $f:\hn\rightarrow X$
that satisfies
\begin{align}\label{mcond}
f\Vert_\gamma = f
\end{align}
for each $\gamma\in\Gamma$.
When $n=1$, we also require that $f$ is holomorphic at the cusps.

We denote the space of all (holomorphic) automorphic forms of weight $\omega$ with respect to $\Gamma$ by $\mathcal{M}_{\omega}(\Gamma)$, and we set
$\mathcal{M}_{\omega} =\cup_{\Gamma}\mathcal{M}_{\omega}(\Gamma).$
%
A $\ci$-automorphic form of weight $\omega$ with respect to a congruence subgroup $\Gamma$ is a $\ci$-function
$f:\hn\rightarrow X$
satisfying \eqref{mcond} for each $\gamma\in\Gamma$.

\subsection{Automorphic forms from another perspective}\label{earfirst}
Our discussion here is a generalization to abelian varieties (of type $\Omega$) of the situation for elliptic curves discussed in Appendix A1.1 of \cite{ka2}.  


Let $\Omega$ be as in \eqref{omega}.  Let $(\LL, E, \{t_i\}^{s}_{i=1})$ be a tuple consisting of a lattice $\LL$ in $\IC^g$ such that there is an $\IR$-linear isomorphism
\begin{align}\label{qlat1}
q:\IC^1_g\rightarrow\IC^g
\end{align} 
satisfying
$q(L) = \LL$
and
$q(ax) = \Psi(a)q(x)$
for all $a\in \cmfield$ and $x\in L$, a finite set of points $\{t_1, \ldots, t_s\}$ in $\LL\otimes\IQ$ such that $q(u_i) = t_i$ for all $i$, and a Riemann form $E = E_q$ on $\IC^g$ relative to $\LL$ such that
\begin{align}\label{qlat2}
E_q(q(x), q(y)) = \tr_{K/\IQ}(x\eta_n y\s)
\end{align}
for all $x, y\in \IC^g$.  We call such a tuple $(\LL, E, \{t_i\}_{i=1}^s)$ a {\it lattice of type} $\Omega$.    We define an action of $GL_n(\IC)\times GL_n(\IC)$ on the set of tuples $(\LL, E, \{t_i\}_{i=1}^s)$ of type $\Omega$ via
$\alpha\cdot (\LL, E, \{t_i\}_{i=1}^s) = (\alpha\LL, E^{\alpha}, \{\alpha t_i\}_{i=1}^s),$
where $E^{\alpha}$ is defined by
$E^\alpha(\alpha z,  \alpha w) = E(z, w).$
Observe that, modulo the action of $GL_n\times GL_n$ on lattices of type $\Omega$, there is a natural correspondence between lattices of type $\Omega$ and isomorphism classes of abelian varieties of type $\Omega$.
%

%

\begin{thm}
Fix a CM type $\Omega$.  Let $L$ be a $\ZZ$-lattice in $\IC^g$, and let $f$ be an automorphic form of weight $\omega$ with respect to a congruence subgroup $\Gamma$ of $U(\eta_n)$
containing the group 
\begin{align}\label{gammapr}
\Gamma' =  \{\alpha\in U(\eta_n)\vert L\alpha = L \mbox{ and } u_i\alpha - u_i\in L \mbox{ for each } i\}.
\end{align} There exists a unique function $f_L$ of lattices $(\LL\subset\IC^g, E, \{t_i\}_{i = 1}^s)$ of type $\Omega$ with $Nu_i\in \LL$ for all $i$, such that for each $\alpha\in GL_n(\IC)\times GL_n(\IC)$,
\begin{align}\label{flll}
f_L(\alpha \cdot (\LL, E, \{t_i\}_{i = 1}^s)) = \omega({^t\alpha})^{-1}f_L(\LL, E, \{t_i\}_{i = 1}^s)
\end{align}
and such that
\begin{align}\label{necpx}
f_L(p_z(L), E_z, \{p_z(u_i)\}_{i=1}^s) = f(z)
\end{align} for all $z$ in $\hn$.
\end{thm}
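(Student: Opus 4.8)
The plan is to \emph{define} $f_L$ by transporting the values of $f$ through the classification of lattices (equivalently, abelian varieties) of type $\Omega$ furnished by Theorem~\ref{thmfe}, and then to reduce both required identities and uniqueness to a single well-definedness check. Let $\mathcal{L} = (\LL, E, \{t_i\}_{i=1}^s)$ be a lattice of type $\Omega$ satisfying the torsion condition on the $t_i$. By the natural correspondence between lattices of type $\Omega$ and isomorphism classes of abelian varieties of type $\Omega$ (modulo the $\glnc\times\glnc$-action recalled above) together with Theorem~\ref{thmfe}, there exist $z\in\hn$ and $\alpha\in\glnc\times\glnc$ with $\mathcal{L} = \alpha\cdot(\pz(L), E_z, \{\pz(u_i)\}_{i=1}^s)$, the torsion condition being compatible with this. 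Set
\[
f_L(\mathcal{L}) := \omega({}^t\alpha)^{-1}f(z).
\]
If this is well defined, then \eqref{necpx} is the case $\alpha = 1$; \eqref{flll} holds because $\omega$ is a homomorphism and $\alpha'\cdot(\alpha\cdot\mathcal{L}) = (\alpha'\alpha)\cdot\mathcal{L}$; and uniqueness is automatic, since \eqref{flll} and \eqref{necpx} force precisely the value $\omega({}^t\alpha)^{-1}f(z)$ on every lattice of type $\Omega$ admitting such a presentation, which by the above is all of them. Thus the whole statement reduces to well-definedness.

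\emph{Well-definedness.} Write $\mathcal{L}_z := (\pz(L), E_z, \{\pz(u_i)\}_{i=1}^s)$ (not to be confused with the lattice $\mathcal{L}_L(z) = \pz(L)$), and suppose $\alpha\cdot\mathcal{L}_z = \beta\cdot\mathcal{L}_w$. Then $\mathcal{L}_w = (\beta^{-1}\alpha)\cdot\mathcal{L}_z$, so $P_w$ and $P_z$ are isomorphic as abelian varieties of type $\Omega$, and Theorem~\ref{thmfe} gives $w = \gamma z$ for some $\gamma$ in the group $\Gamma'$ of \eqref{gammapr}. The key step is the cocycle relation: for every $\gamma\in\Gamma'$ there is an automorphy factor $\mathfrak{m}(\gamma,z)\in\glnc\times\glnc$, built from $M_\gamma(z) = (\mu(\gamma,z),\lambda(\gamma,z))$, with
\[
\mathcal{L}_{\gamma z} = \mathfrak{m}(\gamma,z)\cdot\mathcal{L}_z
\qquad\text{and}\qquad
{}^t\mathfrak{m}(\gamma,z) = M_\gamma(z)^{-1}.
\]
Indeed, $\pz$ is $\IR$-linear and $L\gamma = L$, so once one has the pointwise identity $p_{\gamma z}(x\gamma) = \mathfrak{m}(\gamma,z)\,\pz(x)$ the lattice is carried correctly; the relation $\gamma\etan\gamma\s = \etan$ forces $E_{\gamma z} = E_z^{\mathfrak{m}(\gamma,z)}$; and $u_i\gamma - u_i\in L$ makes $\{p_{\gamma z}(u_i)\}$ and $\{\mathfrak{m}(\gamma,z)\pz(u_i)\}$ agree modulo $\mathcal{L}_{\gamma z}$. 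Now $(\beta^{-1}\alpha)\cdot\mathcal{L}_z = \mathcal{L}_w = \mathcal{L}_{\gamma z} = \mathfrak{m}(\gamma,z)\cdot\mathcal{L}_z$, so $\delta := \mathfrak{m}(\gamma,z)^{-1}\beta^{-1}\alpha$ stabilizes $\mathcal{L}_z$, hence gives an automorphism of $P_z$; by Theorem~\ref{thmfe} again, $\delta = \mathfrak{m}(\gamma',z)$ for some $\gamma'\in\Gamma'$ with $\gamma' z = z$, and since $\Gamma'\subseteq\Gamma$ the automorphy $f\Vert_\omega\gamma' = f$ at the fixed point $z$ gives $\omega({}^t\delta)^{-1}f(z) = f(z)$. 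Writing $\alpha = \beta\,\mathfrak{m}(\gamma,z)\,\delta$, so ${}^t\alpha = {}^t\delta\,{}^t\mathfrak{m}(\gamma,z)\,{}^t\beta$, and using $\omega({}^t\mathfrak{m}(\gamma,z))^{-1} = \omega(M_\gamma(z))$ together with $f(\gamma z) = \omega(M_\gamma(z))f(z)$,
\[
\omega({}^t\alpha)^{-1}f(z)
= \omega({}^t\beta)^{-1}\,\omega({}^t\mathfrak{m}(\gamma,z))^{-1}\,\omega({}^t\delta)^{-1}f(z)
= \omega({}^t\beta)^{-1}\,\omega(M_\gamma(z))f(z)
= \omega({}^t\beta)^{-1}f(w),
\]
which is exactly what well-definedness requires.

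\emph{Main obstacle.} Everything above is formal once one has the transformation lemma $\mathcal{L}_{\gamma z} = \mathfrak{m}(\gamma,z)\cdot\mathcal{L}_z$, i.e. once $p_{\gamma z}(x\gamma)$ has been unwound into $M_\gamma(z)$ times $\pz(x)$ at the level of coordinates. This is a matrix computation made delicate by Shimura's conventions: row versus column vectors, the asymmetry between $z$ and ${}^tz$ in the two components of $\pz$ (reflecting the $\uo^+/\uo^-$ splitting), the behavior of complex conjugation on the $\cmfield$-entries of $\gamma$, and the need to determine whether the weight factor appears as $M_\gamma(z)$, as ${}^tM_\gamma(z)$, or as an inverse thereof. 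Pinning this bookkeeping down so that it dovetails with the weight-$\omega$ automorphy relation $f\Vert_\omega\gamma = f$ is the one genuinely technical step; the compatibilities of $E$ and of the torsion points, as well as \eqref{flll} and uniqueness, are routine afterwards.
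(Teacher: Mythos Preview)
Your approach is essentially the same as the paper's: define $f_L$ by the forced formula $\omega({}^t\alpha)^{-1}f(z)$ and reduce everything to a well-definedness check, which in turn rests on the transformation law for $p_z$ under $\Gamma'$ together with Theorem~\ref{thmfe}. The paper dispatches your ``main obstacle'' by citing Shimura's identity $p_z(x\alpha) = {}^tM(\alpha,z)\,p_{\alpha z}(x)$ (line~(4.31) of \cite{shar}), which in your notation gives exactly $\mathfrak m(\gamma,z) = {}^tM(\gamma,z)^{-1}$.

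One point where you are actually more careful than the paper: from $T\cdot\mathcal L_w = S\cdot{}^tM(\gamma,z)\cdot\mathcal L_w$ the paper simply writes $T = S\cdot{}^tM(\gamma,z)$, implicitly treating the $GL_n(\IC)\times GL_n(\IC)$-action on lattices of type $\Omega$ as free. Your introduction of $\delta$ stabilizing $\mathcal L_z$, identification of $\delta$ with $\mathfrak m(\gamma',z)$ for some $\gamma'\in\Gamma'$ fixing $z$, and use of $f\Vert_\omega\gamma' = f$ at the fixed point is the honest way to handle potential automorphisms of $P_z$. (To make that step airtight you should note that the stabilizer of $\mathcal L_z$ in $GL_n(\IC)\times GL_n(\IC)$ is exactly the automorphism group of $P_z$ as a tuple of type $\Omega$, and then invoke the refinement of Theorem~\ref{thmfe} that identifies $\mathrm{Aut}(P_z)$ with $\{\gamma'\in\Gamma' : \gamma' z = z\}$; this is implicit in Shimura's proof.)
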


\begin{proof}
Let $(\LL\subset\IC^g, E, \{t_i\}_{i = 1}^s)$ be a lattice of type $\Omega$ such that $Nt_i\in \LL$ for all $i$, and let $q$ be as in \eqref{qlat1} through \eqref{qlat2}.  Then, as explained in the first paragraph of Theorem 4.8 of \cite{shar}, there is a diagonal matrix $S\in GL_{2n}(\IC)$ and an element $z\in\hn$ such that
\begin{align}\label{LLe}
q = S\cdot p_z,
\end{align}
i.e. such that $(\LL\subset\IC^g, E, \{t_i\}_{i = 1}^s) = S\cdot (p_z(L), E_{p_z}, p_z(u_i)).$
Therefore, if the function $f_L$ exists, then by \eqref{flll} and \eqref{necpx}, its value at $\LL$ {\it must} be $\omega({^tS})^{-1}f(z)$.  

Now we show that the function $f_L$ exists.  For this, it suffices to show that if there exist matrices $S$ and $T$ in $GL_n(\IC)\times GL_n(\IC)$ and elements $z$ and $w$ in $\hn$ such that
\begin{eqnarray}
(\LL\subset\IC^g, E, \{t_i\}_{i = 1}^s) = S\cdot (p_z(L), E_{p_z}, p_z(u_i))\label{psz}\\
\mbox{and}\nonumber\\
(\LL\subset\IC^g, E, \{t_i\}_{i = 1}^s) = T\cdot (p_w(L), E_{p_w}, p_w(u_i))\label{pwz},
\end{eqnarray}
then
\begin{align}\label{lrsh8}
\omega({^tT})^{-1}f(w) = \omega({^tS})^{-1}f(z).
\end{align}
For the remainder of the proof, suppose that both \eqref{psz} and \eqref{pwz} hold.  Then the abelian varieties of type $\Omega$ attached to $(p_z(L), E_{p_z}, p_z(u_i))$ and $(p_w(L), E_{p_w}, p_w(u_i))$ are both isomorphic to the abelian variety of type $\Omega$ attached to $(\LL\subset\IC^g, E, \{t_i\}_{i = 1}^s)$.  Therefore, by Theorem \ref{thmfe},
$w = \gamma z$
for some 
$\gamma\in \Gamma'\subset \Gamma\subset U(\eta_n).$
By line (4.31) of \cite{shar},
\begin{align}\label{pzL}
p_z(x\alpha) = {^tM(\alpha, z)}p_{\alpha z}(x)
\end{align}
for all $x\in \IC^g$ and $\alpha\in U(\eta_n)$.  Since $L = L\alpha$ for all $\alpha$ in $\Gamma'$, \eqref{pzL} shows that
$p_z(L) = {^tM(\gamma, z)}p_{w}(L).$
Since $u_i-\alpha u_i\in L$ for each $\alpha$ in $\Gamma'$,
$p_z(u_i\alpha) = p_z(u_i)$
for each $\alpha$ in $\Gamma'$.  Since $\gamma\in U(\eta_n)$,
$\gamma\eta_n\gamma\s = \eta_n,$
Hence, it immediately follows from the definition of $E_z$ that
$E_{p_z}(p_z(x\alpha), p_z(y\alpha)) = E_{p_z}(p_z(x), p_z(y)).$
So
\begin{align*}
(p_z(L), E_{p_z}, p_z(u_i)) = {^tM(\gamma, z)}\cdot (p_w(L), E_{p_w}, p_w(u_i))
\end{align*}
Therefore
$T = S \cdot  {^tM(\gamma, z)},$
so the right hand side of \eqref{lrsh8} is equal to
\begin{align}
\omega({^tT})^{-1}\omega(M(\gamma, z))f(z) = \omega({^tT})^{-1}f(\gamma z).\label{lhstoo}
\end{align}
Since $w = \gamma z$, the right hand side of \eqref{lhstoo} is equal to the left hand side of \eqref{lrsh8}.
So Equation \eqref{lrsh8} holds, which proves the existence of the function $f_L$.
\end{proof}

Having reformulated the definition of automorphic forms in terms of functions of lattices of type $\Omega$, we now reformulate it again in terms of functions of complex abelian varieties of type $\Omega$.  Observe that giving an ordered basis $\{\omega_i^{\pm}\}_{i=1}^n$ of $\uo^\pm$ is equivalent to giving an element of the module
$\EE_{\uA}^{\pm} = \Isom_{\IC}(\IC^n, \uo^{\pm}_{\uA}).$
(This equivalence is via $\langle e_i\mapsto\omega_i^{\pm}\rangle_{i=1}^{n}\leftrightarrow \{\omega_i^\pm\}_{i=1}^n$, with ${e_i}$ standard basis vectors in $\IC^n$.)  The group $GL_n(\IC)$ acts on $\EE_{\uA}\pm$ via
\begin{align}
(\alpha\cdot\lambda)(v) := \lambda({^t\alpha}\cdot v)\label{glact}
\end{align}
for all $v\in \IC^n$ and $\alpha\in GL_n(\IC)$.  We define $\EE_{\uA}$ by
$\EE_{\uA}: = \EE_{\uA}^-\oplus \EE_{\uA}^+.$
Then the action of $GL_n(\IC)$ on $\EE_{\uA}^{\pm}$ given in \eqref{glact} induces an action of $GL_n(\IC)\times GL_n(\IC)$ on $\EE_{\uA}$, and to give an element of $\EE_{\uA}$ is equivalent to specifying an ordered basis of $\uo^+$ and an ordered basis of $\uo^-$.

Let $\underline{\LL}(\uA)$ be the lattice of type $\Omega$ attached to $\uA$ as in Section \ref{trasec}.  Then we see that for each $\alpha\in GL_n(\IC)\times GL_n(\IC)$ and $\lambda\in \EE_{\uA}$,
\begin{align}\label{latab}
\alpha\cdot \underline\LL = \underline\LL(\uA, \alpha\cdot\lambda).
\end{align}

Let $(\omega, V)$ be a finite-dimensional representation of $\glnc\times\glnc$, and let 
$f:\hn\rightarrow V$
be an automorphic form of weight $\omega$ with respect to some congruence subgroup $\Gamma$ of $U(\eta_n)$ containing $\Gamma'$ (with $\Gamma'$ defined as in \eqref{gammapr}).  We define $F_L$ to be the unique function from pairs $(\uA, \lambda)$
(where $\lambda$ is an element of $\EE_{\uA}$) to $V$ satisfying both
$F_L(\uA, \alpha\lambda) = \omega({^t\alpha})^{-1}F_L(\uA, \lambda)$
and
$F_L(\uA, \lambda) = f
_L(\underline\LL(\uA, \lambda)).$
Thus, an automorphic form $f$ of weight $\omega$ on $\Gamma$ corresponds to a function $F$ from pairs $(\uA, \lambda)$ to $V$ satisfying
\begin{align}\label{dagmo}
F(\uA, \alpha\lambda) = \omega({^t\alpha})^{-1}F(\uA, \lambda).
\end{align}

Now we explain how to view functions $F$ satisfying \eqref{dagmo} as certain functions on abelian varieties $\uA$ of type $\Omega$.  Given a ring $R$ and a group $B$ that acts on $R$-modules $V_1$ and $V_2$, we define the contracted product $V_1\times^B V_2$ to be $V_1\oplus V_2$ modulo the relation $(v_1, v_2)\sim(bv_1, bv_2)$.  Let $H^{\omega} : = \glnc\times\glnc$ act on $\EE_{\uA}$ by the action induced by \eqref{glact}, and let $H^{\omega}$ act on $V$ via
$v\mapsto \omega({^th})^{-1}v.$
We define $\EE_{\uA, V, \omega}$ to be the contracted product
$\EE_{\uA, V, \omega} := \EE_{\uA}\times^{H^{\omega}} V.$
To give a function $F$ from pairs $(\uA, \lambda)$ to $V$ satisfying \eqref{dagmo} is equivalent to giving a function $\tilde{F}$ from abelian varieties $\uA$ of type $\Omega$ to $\EE_{\uA, V, \omega}$.  This equivalence is via
$\tilde{F}(A) = (\lambda, F(\uA, \lambda)).$

Letting $A^{\an}_{univ}$ be the universal family of abelian varieties of type $\Omega$ over $\Gamma\backslash\hn$,
we see that giving a holomorphic automorphic form $f$ is equivalent to giving a section of the $\mathcal{O}_{\hn}^{\hol}$-module
\begin{align*}
\EE_{V, \omega}^{\an} :=\EE_{A^{\an}_{univ}, V, \omega}\otimes \mathcal{O}_{\hn}^{\hol}.
\end{align*}
Similarly, giving a $\ci$-automorphic form $f$ is equivalent to giving a section of the $\mathcal{O}_{\hn}(\ci)$-module\begin{align*}
\EE_{V, \omega}(\ci) :=\EE_{A^{\an}_{univ}, V, \omega}\otimes \mathcal{O}_{\hn}(\ci).
\end{align*}

\subsection{Algebraic geometric approach to automorphic forms on unitary groups}\label{agafapproach}
The approach in this section is similar to the one in Section 1.2 of \cite{kaCM}.

Fix an $\OK$-algebra $R_0$.  Let $V$ be an $R_0$-module.  For any $R_0$-algebra $R$, we denote by $V_R$ the $R$-module $V\otimes_{R_0}R$ obtained by extension of scalars $R_0\rightarrow R$.  Let $(\rho, V)$ be an algebraic representation of $\gln\times\gln$ that is defined over $R_0$.  That is, for each $R_0$-algebra $R$, $\rho$ defines a homomorphism
\begin{align*}
\rho_R: \gln(R)\times\gln(R)\rightarrow GL(V_R)
\end{align*}
that commutes with extension of scalars $R\rightarrow R'$ of $R_0$-algebras.

Fix a compact open subgroup $\cpctopen$ of $G=GU(n,n)$.  We denote by $\Gamma$ the congruence subgroup $G(\IQ)\cap K$.  For each abelian variety $\uA = \peltup$ in $\shfunctor(R)$ over an $R_0$-algebra $R$, we now define modules $\EARp$, $\EARm$, and $\EAR$.  Like in Section \ref{earfirst}, we define
$\EARpm = \Isom_R(R^n, \uo^{\pm}_{A/R})$ and
$\EAR = \EARm\oplus \EARp.$
To give an element of $\lambda\in \EARpm$ is equivalent to specifying an ordered basis $\omega_1, \ldots, \omega_n$ of $\uo^{\pm}_{A/R}$; the equivalence is via
\begin{align*}
\lambda\in \EARpm \leftrightarrow \lambda(e_1), \ldots, \lambda(e_n) \in \uo^{\pm}_{A/R}.
\end{align*}
So giving element of $\EAR$ is equivalent to specifying an ordered basis of $\uo^-$ and an ordered basis of $\uo^+$.
The group $GL_n(R)$ acts on $\EARpm$ via
\begin{align}
(\alpha\cdot\lambda)(v) : = \lambda({^t\alpha}v).\label{glnact}
\end{align}
The action of $GL_n(R)$ given in \eqref{glnact} induces an action of $\gln(R)\times \gln(R)$ on $\EAR$.  Let $H^{\rho} = GL_n(R)\times GL_n(R)$ act on $V_R$ via $v\mapsto \rho({^t\alpha})^{-1}v$ and act on $\EAR$ through the action induced by \eqref{glnact}.  Similarly to in Section \ref{earfirst}, we denote by $\EARVr$ the $R$-module
$\EAR\times^{H^{\rho}}V.$
Observe that formation of $\EARVr$ commutes with extension of scalars $R\rightarrow R'$ of $R_0$-algebras.

\begin{defi}\label{autd1}An automorphic form of weight $\rho$, defined over $R_0$, is a function $f$ from the set of pairs $(\uA, \lambda)$, consisting of $\uA$ in $_K A_V(R)$ over an $R_0$-algebra $R$ and an element $\lambda$ in $\EAR$, to $V_R$ such that all of the following hold:
\begin{enumerate}
\item{The element $f(\uA, \lambda)$ depends only on the $R$-isomorphism class of $(\uA, \lambda)$.}
\item{The formation of $f(\uA, \lambda)\in V_R$ commutes with extension of scalars $R\rightarrow R'$ of $R_0$-algebras, i.e.
\begin{align*}
f(\uA\times_{\Spec R} R', \lambda\otimes_R R') = f(\uA, \lambda)\otimes_R 1 \in V\otimes_R R'.
\end{align*}}
\item{For each $(\uA, \lambda)$ over $R$ and $\alpha\in H^{\rho}(R)$,
$f(\uA, \alpha\lambda) = \rho({^t\alpha})^{-1}f(\uA, \lambda).$}
\end{enumerate}
We write $M_{\rho}(R_0)$ to denote the $R_0$-module of automorphic forms of weight $\rho$ defined over $R$.
\end{defi}

\begin{defi}\label{autd2}
An automorphic form of weight $\rho$ defined over $R_0$, is a rule $\tilde{f}$ that assigns to each $\uA$ in $_KA_V(R)$ over an $R_0$-algebra $R$ an element of $\EARVr$ such that both of the following conditions hold:
\begin{enumerate}
\item{The element$\tilde{f}(\uA)$ in $\EARVr$ depends only on the $R$-isomorphism class of $\uA$.}
\item{The formation of $\tilde{f}(\uA)$ commutes with extension of scalars $R\rightarrow R'$ of $R_0$-algebras, i.e.
\begin{align*}
\tilde{f}(\uA\times_{\Spec R}\Spec R') = \tilde{f}(\uA)\otimes_R 1\in \EARVr\otimes_R R'.
\end{align*}}
\end{enumerate}
\end{defi}
The equivalence of these Definition \ref{autd1} with Definition \ref{autd2} is through
$\tilde{f}(\uA) = (\lambda, f(\uA, \lambda)).$

The perspective of Definition \ref{autd2} leads us to another (equivalent) formulation of the definition of automorphic forms in the case where $_KA_V(R)$ is representable (i.e. in the case where $\cpctopen$ is sufficiently small).    In this case, consider the scheme 
\begin{align*}
\xymatrix{
\moduli : = \moduli_R(K) := _KSh(V)\times_{\OK} R\ar[d]\\
\Spec R
 }
\end{align*}  We denote by $\EE^{\pm}$ the locally free sheaf 
$\EE^{\pm} = \underline{\Isom}_{\OM}(\OM^n, \uo^{\pm})$
of $\OM$-modules on $\moduli$, and we denote by $\EE$ the locally free sheaf 
$\EE = \EE^-\oplus\EE^+$
 of $\OM$-modules on $\moduli$.  We denote by $\EEVr$, the locally free sheaf $\EE\times^{H^{\rho}}V$.  Then an automorphic form of weight $\rho$ is a global section of the sheaf $\EEVr$ on $\moduli_R(K)$.  Note that for any representation $(\rho, V)$ that can be decomposed as a direct sum $(\rho_1\oplus \rho_2, V_1\oplus V_2)$, the map
\begin{align}
\EEVr\rightarrow \EE_{V_1, \rho_1}\oplus\EE_{V_2, \rho_2}\label{edecom}\\
(\lambda, v)\mapsto ((\lambda, v_1), (\lambda, v_2))\nonumber
\end{align}
is an isomorphism.  
Therefore, to give an automorphic form of weight $\rho$ is equivalent to giving an automorphic form of weight $\rho_1$ and an automorphic form of weight $\rho_2$.

Since $GL_n$ is reductive, each finite dimensional representation $\rho$ can be written as a direct sum of irreducible representations 
$\rho = \rho_1\oplus\dots\oplus \rho_m$
for some $m$.  
Every irreducible representation of $GL_n$ can be realized as a subrepresentation of one of the representations constructed as follows.
For each set $\Lambda$ of ordered integers $\lambda_1\geq \ldots\geq \lambda_n$, there is a 
representation $(\rho_{\Lambda}, V_{\Lambda})$ of highest weight $\Lambda$.  The representation $(\rho_{\Lambda}, V_{\Lambda})$ can be realized explicitly by taking 
$V_{\Lambda} = \sym^{(\lambda_1-\lambda_2)}(R^n)\otimes \sym^{(\lambda_2-\lambda_3)}(\wedge^2 R^n)\otimes\cdots\otimes \sym^{\lambda_n}(\wedge^n R^n),$
and letting $\rho_{\Lambda}$ be the $GL_n$-action on $V_{\Lambda}$ induced by the standard representation of $GL_n(R)$ on $R^n$.  If $\lambda_n$ is negative, then by $\sym^{\lambda_n}(\wedge^n R^n)$, we mean the dual representation of $\sym^{-\lambda_n}(\wedge^n R^n)$, which is just the representation in which each $g\in \gln$ acts on each $v\in R$ by $v\mapsto \det g ^{\lambda_n}v$.  
(Note that the highest weight vector in $V_{\Lambda}$ is $(e_1)^{(\lambda_1-\lambda_2)}\otimes (e_1\wedge e_2)^{(\lambda_2-\lambda_3)}\otimes\cdots\otimes (e_1\wedge\cdots\wedge e_n)^{\lambda_n}$.)  Every irreducible representation of $\gln\times\gln$ is of the form $\rho^-\otimes \rho^+$ with $\rho^{\pm}$ irreducible representations of $\gln$.  

Let $W$ be a free rank $n$ $R$-module.  We write $W^{\rholam}$ to denote $\sym^{(\lambda_1-\lambda_2)}(W)\otimes \sym^{(\lambda_2-\lambda_3)}(\wedge^2 W)\otimes\cdots\otimes \sym^{\lambda_n}(\wedge^n W)$.  When $\rho$ is an arbitrary representation whose decomposition into irreducible representations is $\rho_{\Lambda_1}\oplus\cdots\oplus\rho_{\Lambda_m}$, we denote by $W^{\rho}$ the module $W^{\rho_{\Lambda_1}}\oplus\cdots\oplus W^{\rho_{\Lambda_m}}$.  Given another free-module $W_0$ of rank $n$ and a representation $\rho = \rho_{\Lambda_1}\otimes\rho_{\Lambda_2}$, we write $(W\otimes W_0)^{\rho_{\Lambda_1}\otimes\rho_{\Lambda_2}}$ to denote the module $W^{\rho_{\Lambda_1}}\otimes W_0^{\rho_{\Lambda_2}}$.  Given an arbitrary representation ${\rho}$ whose decomposition into irreducible representations is $\rho_{1}\oplus\cdots\oplus\rho_{m}$, we write $(W\otimes W_0)^{\rho}$ to denote the module $W^{\rho_{1}}\oplus\cdots\oplus W^{\rho_{m}}$.

Let $\Lambda$ be an ordered set of integers $\lambda_1\geq \ldots\geq \lambda_n$, corresponding to the representation of $\gln$ of highest weight $\Lambda$. Each $\lambda^\pm\in\EE^{\pm}$ induces an isomorphism
$\lambda^{\pm,\rholam}:\Vlam = (R^n)^{\rholam}\rightarrow (\uo^{\pm})^{\rholam}$
defined by 
$v_1\cdot v_2\cdot \cdots \cdot v_m\mapsto \lambda(v_1)\cdot\lambda(v_2)\cdot\cdots\cdot \lambda(v_m)$
where each $v_i$ is in $R^n$ and each $\cdot$ denotes the symmetric, tensor, or alterating product (according to $\Vlam$).  Observe that
$(\alpha\cdot\lambda)^{\pm\rholam} = \lambda^{\rholam}(\rholam({^t\alpha})v).$
So given a representation $\rho_{\Lambda_-}\otimes \rho_{\Lambda_+}$ of $\gln\times\gln$, each isomorphism $(\lambda^-, \lambda^+)\in\EE = \EE^-\oplus\EE^+$ induces an isomorphism
$\lambda^{\rho_{\Lambda_-}\otimes\rho_{\Lambda_+}}:V_{\rho_{\Lambda_-}}\otimes V_{\rho_{\Lambda_+}} = (R^n)^{\rho_{\Lambda_-}}\otimes (R^n)^{\rho_{\Lambda_+}}\isomto (\uo^-)^{\rho_{\Lambda_-}}\otimes(\uo^+)^{\rho_{\Lambda_+}}$
via
$v^-\otimes v^+\mapsto \lambda^{\rho_{\Lambda_-}}(v^-)\otimes \lambda^{\rho_{\Lambda_+}}(v^+).$
Observe that
$(\alpha\cdot\lambda)^{\rho_{\Lambda_-}\otimes\rho_{\Lambda_+}}(v) = \lambda^{\rho_{\Lambda_-}\otimes\rho_{\Lambda_+}}(\rho_{\Lambda_-}\otimes\rho_{\Lambda_+}({^t\alpha})v).$
Therefore, there is an isomorphism
\begin{align}\label{isototens}
\EE_{V_{\Lambda_-}\otimes V_{\Lambda_+}, \rho_{\Lambda_-}\otimes\rho_{\Lambda_+}}\isomto (\uo^-)^{\rho_{\Lambda_-}}\otimes(\uo^+)^{\rho_{\Lambda_+}},
\end{align}
defined by
$(\lambda, v)\mapsto \lambda^{\rho_{\Lambda_-}\otimes\rho_{\Lambda_+}}(v).$
%
Thus, at least in the case in which $\cpctopen$ is sufficiently small (i.e. when the moduli problem $_KA_V$ is representable), automorphic forms of weight $\tilde{\rho}$ (with $\tilde{\rho}$ a subrepresentation of $\rho = \rho_{\Lambda_-}\otimes\rho_{\Lambda_+}$) are sections of $(\uo^-\otimes\uo^+)^{\rho}.$  This last perspective (i.e. viewing automorphic forms as sections of $(\uo^-\otimes\uo^+)^{\rho}$ will be particularly useful to us when defining the differential operators.

When working over $\IC$, the following theorem, which relates algebraic automorphic forms to holomorphic ones, is useful.
\begin{thm}
If $n>1$, then $f\mapsto f^{\an}$ gives an isomorphism
$M^{alg}(\IC)(\rho, \Gamma) \rightarrow M^{\an}(\rho, \Gamma).$
\end{thm}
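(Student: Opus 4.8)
The plan is to prove this as a GAGA-type comparison theorem, with Koecher's principle supplying the ingredient that is special to $n>1$. Write $\moduli = \moduli_{\IC}(K)$ for the smooth quasi-projective $\IC$-variety carrying the universal family; after replacing $K$ by a neat subgroup $K'\subseteq K$ if necessary and, at the end, passing to invariants under the finite group $\Gamma/\Gamma'$ (analytification commutes with such finite invariants), we may assume $_KA_V$ is representable, so that, as established above, $M^{alg}(\IC)(\rho,\Gamma) = \Gamma(\moduli, \EEVr)$, where $\EEVr$ is a coherent (in fact locally free) sheaf on $\moduli$ built from $\uo^{\pm}$ by the linear-algebra operations $\otimes$, $\sym$, $\wedge$ and duality — this is exactly the content of the isomorphism identifying $\EEVr$ with a direct sum of sheaves $(\uo^-)^{\rho^-}\otimes(\uo^+)^{\rho^+}$ — while $M^{\an}(\rho,\Gamma)$ is the space of analytic global sections of the corresponding analytic sheaf $\EE^{\an}_{V,\rho}$ on $\moduli^{\an}$. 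The first step is to identify $\EEVr^{\an}$ with $\EE^{\an}_{V,\rho}$: this holds because the analytic universal abelian variety over $\moduli^{\an}$ is the analytification of $A_{univ}/\moduli$, so $(\uo_{A_{univ}/\moduli})^{\an}$ is the $\uo$ of the analytic family (compatibly with the $\pm$-splitting), and analytification commutes with $\otimes$, $\sym$, $\wedge$ and duality. Under this identification $f\mapsto f^{\an}$ is the canonical comparison map $\Gamma(\moduli,\mathcal{F})\to\Gamma(\moduli^{\an},\mathcal{F}^{\an})$ for the coherent sheaf $\mathcal{F}=\EEVr$, so $f^{\an}$ is automatically holomorphic and $\Gamma$-equivariant and the map is well defined. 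Injectivity is then immediate: $\moduli$ is a reduced $\IC$-variety, so a nonzero algebraic global section of a coherent sheaf is nonzero on a dense Zariski-open set, hence nonzero as an analytic section; thus $f^{\an}=0$ forces $f=0$.

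Surjectivity is the heart of the argument. Fix a smooth projective toroidal compactification $\bar{\moduli}\supseteq\moduli$ over $\IC$ (Faltings--Chai, Lan) whose boundary $D=\bar\moduli\setminus\moduli$ is a normal-crossings divisor and to which the sheaf $\uo$ — hence also $\EEVr$ — extends to a coherent sheaf $\overline{\EEVr}$ on $\bar\moduli$. Let $s\in\Gamma(\moduli^{\an},\EEVr^{\an})$ be a holomorphic automorphic form. Because the $\IQ$-rank of $U(n,n)$ equals $n>1$, Koecher's principle applies: along every boundary stratum the Fourier--Jacobi expansion of $s$ is automatically supported on the closure of the positive cone, so $s$ extends to a holomorphic section $\bar s$ of $\overline{\EEVr}^{\an}$ over the \emph{compact} analytic space $\bar\moduli^{\an}$. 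Serre's GAGA theorem, applied to the projective variety $\bar\moduli$ and the coherent sheaf $\overline{\EEVr}$, then shows that $\bar s$ is algebraic; restricting to $\moduli$ yields an algebraic automorphic form $f$ with $f^{\an}=s$, proving surjectivity.

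The step I expect to be the main obstacle is surjectivity, and within it the boundary-extension input underlying Koecher's principle: one must know that $\uo^{\pm}$ (and the sheaves built from it) extend across $D$ in a way compatible with the analytic Fourier--Jacobi expansions, and that for $U(n,n)$ with $n>1$ the boundary geometry forces the ``negative'' Fourier--Jacobi coefficients of a holomorphic automorphic form to vanish. This is precisely where the hypothesis $n>1$ is used: when $n=1$ one must impose holomorphy at the cusps separately, as in the definition above, and the statement as phrased fails without that extra condition.
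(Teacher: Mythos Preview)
Your proposal is correct and follows exactly the approach the paper itself indicates: the paper does not give a detailed proof but simply states that ``this fact involves the existence of toroidal compactifications and the analytic Koecher principle'' and refers the reader to \cite{CF}. Your write-up is a faithful expansion of that sketch --- GAGA on a smooth projective toroidal compactification, with Koecher's principle (using $n>1$) to force holomorphic extension across the boundary --- so there is nothing substantively different to compare.
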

This fact involves the existence of toroidal compactifications and the analytic Koecher principle.  The reader may see \cite{CF} for additional details.

\subsection{$\ci$-automorphic forms}\label{cisec}

For any sheaf $\mathcal{F}$ on $\moduli$, we let $\fci$\index{$\fci$, fci} be the sheaf obtained by tensoring $\mathcal{F}$ with the $\ci$-structural sheaf of $\moduli^{\an}$.

Note that we have inclusions (analogous to (1.8.1) of \cite{kaCM})
\begin{align*}
M^{alg} = H^0(\moduli_{\IC}, \EEVr)\subset M^{\hol} = H^0(\moduli^{\an}, \EEVr\otimes \OM^{\hol})\subset M^{\ci} = H^0(\moduli, \EEVr\otimes\OM^{\ci})
\end{align*}



\section{The Gauss-Manin connection and the Kodaira-Spencer isomorphism}
\label{GMKS}
Throughout this section, let $\pi: X\rightarrow S$ and $\pi':Y\rightarrow S$ be smooth, proper morphisms of schemes, and suppose that $S$ is a smooth scheme over a scheme $T$.

\subsection{The Gauss-Manin connection}In this section, we review the construction of the Gauss-Manin connection (\cite{kaGM}, \cite{KO}, \cite{kan}, \cite{ke}).  

Consider the decreasing filtration of $(\Omega^\bullet_{X/T}, d)$ defined by 
\begin{eqnarray}\nonumber
F^i &= &Fil^i(\Omega^\bullet_{X/T})\nonumber\\
& = & \image(\pi^*\Omega^i_{S/T}\otimes_{\mathcal{O}_X}\Omega_{X/T}^{\bullet-i}\rightarrow\Omega_{X/T}^{\bullet}),\label{canmp}
\end{eqnarray}
where the morphism in (\ref{canmp}) is the canonical one.
The associated graded complex is $Gr(\Omega^\bullet)=\oplus_{p\geq 0}Gr^p$, with $Gr^p = F^p/F^{p+1}$.  As explained in \cite{KO} and \cite{kaGM}, there is a spectral sequence (which converges to $\mathbb{R}^{p+q}\pi_*(\Omega_{X/T}^\bullet) = H^{p+q}_{DR}(X/S)$) with $E_1$ term given by 
$E_1^{p,q} = \mathbb{R}^q\pi_*(Gr^p).$
The Gauss-Manin connection is the differential 
$d_1: E_1^{0, q}\rightarrow E_1^{1, q}.$
We denote the Gauss-Manin connection by $\nabla$\index{$\nabla$}.

Observe that 
$Gr^i \cong \Omega_{X/S}^{\bullet-i}\otimes_{\mathcal{O}_X}\pi^*\Omega_{S/T}^i$
for all $i$.  In particular, we see that 
$E_1^{0, q} \cong  H^q_{DR}(X/S)$ and 
$E_1^{1, q} \cong  H_{DR}^q(X/S)\otimes_{\mathcal{O}_S} \Omega_{S/T}^1.$
So the Gauss-Manin connection is the map 
\begin{align*}
\nabla = d_1: H^q_{DR}(X/S)\rightarrow H_{DR}^q(X/S)\otimes_{\mathcal{O}_S} \Omega_{S/T}^1.
\end{align*}
We will always take $q=1$ when applying the Gauss-Manin connection.

Observe that by construction of $\nabla$, if $f$ is an endomorphism of $X$ over $S$, then
\begin{align}\label{endnb}
\nabla(f\s(v)) = (f\s\otimes \id)(\nabla(v))
\end{align}
for each $v$ in $\dr(X/S)$.  As a consequence of \eqref{endnb}, we see that if $A$ is an abelian variety of type \eqref{peltuple} over an $\OK$-scheme $S$ and $v$ is in $\dr(A/S)^+$, then
$(\iota(a)\s\otimes \id)(\nabla(v)) = \nabla(\iota(a)\s v) = \nabla(a \cdot v) =a\nabla(v) = (a\otimes \id)\nabla(v),$
so
\begin{align}\label{gmonplus}
\nabla\left(\dr(A/S)^+\right)\subseteq \dr(A/S)^+\otimes \Omega
\end{align}
Similarly,
\begin{align}\label{gmonminus}
\nabla\left(\dr(A/S)^-\right)\subseteq \dr(A/S)^-\otimes \Omega.
\end{align}

\subsubsection{An important example}\label{gmex}
If we trace through the map given by the Gauss-Manin connection, we see that it involves lifting a relative form to an absolute form, differentiating the absolute form, and then projecting down to an element of $\dr(X/S)\otimes \Omega_{S/T}^1$.  This idea is best made clear in an example over $\IC$, which we will now provide.  This example not only explicitly illustrates how the Gauss-Manin connection acts in one of the main cases that interests us (the other example being over a $p$-adic base); it also will be useful later when we explicitly describe the action of our $C^\infty$-differential operators and when we relate our $C^\infty$-operators to the ones in \cite{shar}.  This example is strongly inspired by sections 4.0-4.2 of \cite{hasv}, and the construction here is directly analogous to and closely follows what Harris does for symplectic modular forms.  Our example is the $U(n,n)$ analogue of the example for symplectic groups in sections 4.0-4.2 of \cite{hasv}.

As noted in Section \ref{review}, we will work over the underlying $C^\infty$-manifold of our moduli space.  In the example, we will consider $\A$ over $\hn$ over $\IC$, as in Section \ref{reviewsym}.  The sheaf $H^1_{DR}(C^\infty)$ has a splitting 
\begin{align}\label{split}
\drci\cong\omci\oplus\splci,
\end{align} 
where $\omci$ is the space of holomorphic one-forms (which is the $C^\infty$ vector bundle corresponding to the sheaf of relative one-forms $\underline{\omega}=\pi_*\Omega_{\A/\hn}$) and $\splci$ is the space of anti-holomorphic one-forms.  Also, recall that the fiber $\drci_z$ of $\drci$ over a point $z\in \hn$ is the de Rham cohomology of that fiber (i.e. $A_z$, in the notation of Section \ref{reviewsym}) and that the splitting (\ref{split}) induces the Hodge decomposition of $\drci$ at each fiber.

Let $u_1, \ldots, u_{2n}$\index{$u_j$} denote standard coordinates in $\IC^{2n}$.  Then the global relative $1$-forms $du_1, \ldots, du_{2n}$\index{$du_j$} form a basis of the fiber of $\underline\omega$ over each point $z\in H$.

We now define some global relative $1$-forms that have constant periods across the fibers of $\A/\hn$.  We define them to be dual to the one-cycles (in homology) defined in terms of the basis for $L_z$ given in Equations (\ref{latticebasis1}-\ref{latticebasis2}).  

We consider the $\IR$-linear global relative one forms (for $i=1, \ldots , n$) which are given over
$z = \left(z_{ij}\right)\mbox{ in } \hn$
 by 
\begin{eqnarray}\index{$\alpha_i$, ai}\index{$\beta_i$, bi}\index{$\alpha_i'$, aip}\index{$\beta_i'$, bip}
\alpha_i\left(\sum_{j=1}^n a_je_j + \sum_{j=1}^n b_jz_j +\sum_{j=1}^n a'_je'_j +\sum_{j=1}^n b'_jz'_j\right) & = & a_i\label{ab1}\\
\beta_i\left(\sum_{j=1}^n a_je_j + \sum_{j=1}^n b_jz_j +\sum_{j=1}^n a'_je'_j +\sum_{j=1}^n b'_jz'_j\right) & = & b_i\\
\alpha'_i\left(\sum_{j=1}^n a_je_j + \sum_{j=1}^n b_jz_j +\sum_{j=1}^n a'_je'_j +\sum_{j=1}^n b'_jz'_j\right) & = & a'_i\\
\beta'_i\left(\sum_{j=1}^n a_je_j + \sum_{j=1}^n b_jz_j +\sum_{j=1}^n a'_je'_j +\sum_{j=1}^n b'_jz'_j\right) & = & b'_i,\label{ab2}
\end{eqnarray}
for each $a_i, b_i, a_i', b_i'$ in $\IR$.  (Here we are using the notation for the basis of $L_z$ given in Equations (\ref{latticebasis1}-\ref{latticebasis2}).)

Observe that these forms have constant periods along the fibers.  Therefore, 
\begin{eqnarray}\label{GM0}
\nabla\left(\alpha_i\right) = \nabla\left(\beta_i\right) = \nabla \left(\alpha'_i\right) = \nabla\left(\beta'_i\right) = 0 \hspace{0.5cm}\mbox{for } i=1,\ldots, n.
\end{eqnarray}
That is, the sections $\ai, \bi, \aip, \bip$ are {\it horizontal}.
Now we express $du_1, \ldots, du_{2n}$ and $\bar{du_1}, \ldots, \bar{du_{2n}}$ in terms of $\alpha_i, \beta_i, \alpha'_i, \beta'_i$.  Using the definitions of $e_i, z_i, e_i', z_i'$ (as in Equations (\ref{latticebasis1}-\ref{latticebasis2})) and $\alpha_i, \beta_i, \alpha_i', \beta_i'$ (as above), we see that for $i = 1, \ldots, n$,
\begin{eqnarray}
du_i & = & \alpha_i+\sum^n_{j=1}z_{ij}\beta_j+\bar{\alpha}\alpha'_i+\bar{\alpha}\sum_{j=1}^n z_{ij}\beta_j'\label{duis1}\\
\bar{du_i} & = & \alpha_i+\sum^n_{j=1}\bar{z}_{ij}\beta_j+\alpha\alpha'_i+\alpha\sum_{j=1}^n \bar{z}_{ij}\beta_j';
\end{eqnarray}
and for $i=n+1, \ldots, 2n$, we have
\begin{eqnarray}
du_i & = & \alpha_i+\sum^n_{j=1}z_{j, i-n}\beta_j+\alpha\alpha'_i+\alpha\sum_{j=1}^n z_{j,i-n}\beta_j'\\
\bar{du_i} & = & \alpha_i+\sum^n_{j=1}\bar{z}_{j, i-n}\beta_j+\bar{\alpha}\alpha'_i+\bar{\alpha}\sum_{j=1}^n \bar{z}_{j, i-n}\beta_j'\label{duis2}
\end{eqnarray}

Since $\nabla$ is a connection, 
$\nabla\left(a\cdot v\right) = a\nabla\left(v\right)+v\otimes da$,
for any $a\in \mathcal{O}_{\hn}$ and any $v\in H^1_{DR}$.  This, combined with Equations (\ref{duis1}) - (\ref{duis2}) shows that for $i=1, \ldots, n$,
\begin{eqnarray}
\nabla\left(du_i\right) & = & \sum_{j=1}^n\beta_j\otimes dz_{ij}+\bar{\alpha}\sum_{j=1}^n\beta'_j\otimes dz_{ij}\nonumber\\
 & = & \sum_{j=1}^n\left(\beta_j+\bar{\alpha}\beta'_j\right)\otimes dz_{ij}\label{duiexp1}\\
\nabla\left(du_{i+n}\right) & = & \sum_{j=1}^n\beta_j\otimes dz_{ji}+\alpha\sum_{j=1}^n\beta'_j\otimes dz_{ji}\nonumber\\
 & = & \sum_{j=1}^n\left(\beta_j+\alpha\beta'_j\right)\otimes dz_{ji}\label{duiexp2}
\end{eqnarray}

One can similarly compute $\nabla\left(\bar{du_i}\right)$.  For our purposes, however, we will only be interested in the application of $\nabla$ to the holomorphic differentials.  (This is because automorphic forms are associated with the {\it holomorphic} differentials.)

From Equations (\ref{duis1}) - (\ref{duis2}), we see that
\begin{eqnarray}
\left( \begin{array}{cc}
\beta_1+\alpha\beta'_1  \\
\vdots \\
\beta_n+\alpha\beta'_n \end{array} \right) & = & (z^t-\bar{z})^{-1} \left( \begin{array}{cc}
du_{n+1}-d\bar{u}_1  \\
\vdots \\
du_{2n}-d\bar{u}_n \end{array} \right)\label{vecdui1}\\
\nonumber\\
\nonumber\\
\nonumber\\
\left( \begin{array}{cc}
\beta_1+\bar{\alpha}\beta'_1 \\
\vdots \\
\beta_n+\bar{\alpha}\beta'_n \end{array} \right) & = & (z-z^*)^{-1} \left( \begin{array}{cc}
du_1-d\bar{u}_{n+1}  \\
\vdots \\
du_n-d\bar{u}_{2n} \end{array} \right)\label{vecdui2}
\end{eqnarray}

\begin{rmk}
We revisit \eqref{endnb} in the context of our example.  Consider an endomorphism $f:\A\rightarrow \A$ over $\hn$.  Then $f\left(L_z\right)\subset L_z$, and so $f\s$ maps the horizontal sections $\ai, \bi, \aip, \bip$ to horizontal sections.  (i.e. Forms with constant periods are mapped to forms with constant periods.)  Let $v$ be an element of $\dr$.  Then we can write $v = \sum f_i\gamma_i$ for some horizontal sections $\gamma_i$ and some sections $f_i\in\mathcal{O}_{\hn}$.  Since $f$ is a morphism over $\hn$, we see that $f\s\left(v\right) = \sum f_i f\s\gamma_i.$  So
\begin{align*}
\nabla\left(f\s\left(v\right)\right) = f\s\gamma_i\otimes df_i = \left(f\s\otimes \id\right)\nabla\left(\sum f_i\gamma_i\right) = \left(f\s\otimes \id\right)\nabla\left(v\right)
\end{align*}
\end{rmk}

This completes our example (for now) regarding the action of the Gauss-Manin connection.  This perspective will be useful again when we define the $C^\infty$-differential operators.

\subsubsection{Remark about some related connections}From the Gauss-Manin connection, we can construct connections on $(\dr)^{\otimes m}$, $\wedge \dr$, and $\sym \dr$ through the product rule.  For example, for any $v$ and $w$ in $H^1_{DR}$, we set
\begin{align}\label{prodrule}
\nabla\left(v\otimes w\right) = \sigma\left(\nabla\left(v\right)\otimes w\right) + v\otimes\nabla\left(w\right),
\end{align}
where $\sigma$\index{$\sigma$} is the canonical isomorphism switching the order of the last two components of the tensor product:
\begin{align}
\sigma: H^1_{DR}\otimes\Omega\otimes H^1_{DR}&\isomto H^1_{DR}\otimes H^1_{DR}\otimes \Omega\nonumber\\
v_1\otimes v_2\otimes v_3 &\mapsto v_1\otimes v_3\otimes v_2\nonumber
\end{align}
We similarly define $\nabla$ on higher tensor powers of $\dr$ inductively.

\subsection{The Kodaira-Spencer isomorphism}
We briefly review the Kodaira-Spencer isomorphism.  For more detailed treatments, see \cite{la, CF}.

From now on, $X$ is an abelian scheme.  
Let 
$\omega_X:=\pi_*\Omega_{X/S}$ and
$\omega_{X\dual} : = \pi_*\Omega_{X\dual/S},$
where $X\dual$ denotes the dual of $X$.  Hypercohomology gives a canonical exact sequence
\begin{eqnarray}\label{abex}
0\rightarrow \omega_X\hookrightarrow H^1_{DR}(X/S)\rightarrow R^1\pi_*(\mathcal{O}_X)
\end{eqnarray}
 
We then have canonical isomorphisms
$H^1_{DR}(X/S)/(\pi_*\Omega_{X/S})\isomto R^1\pi_*(\mathcal{O}_X)\isomto (\omega_{X\dual})\dual\nonumber.$
Define 
\begin{eqnarray}\label{tr1}
KS'\index{$KS'$}: \omega_X\rightarrow (\omega_{X\dual})\dual\otimes \Omega_{S/T}
\end{eqnarray}
to be the composition of canonical maps 
\begin{small}
\begin{eqnarray}\label{kstrace}
\omega\hookrightarrow H^1_{DR}(X/S)\stackrel{\nabla}{\rightarrow} H^1_{DR}(X/S)\otimes\Omega_{S/T}\twoheadrightarrow R^1\pi_*(\mathcal{O}_X)\otimes \Omega_{S/T}\isomto(\omega_{X\dual})\dual\otimes\Omega_{S/T}.
\end{eqnarray}
\end{small}
Tensoring each side with of \ref{tr1} with $\omega_{X\dual}$, we obtain a morphism
$KS\index{$KS$}: \omega_X\otimes \omega_{X\dual} \rightarrow \Omega_{S/T},$
making the Diagram (\ref{kstr}) commute.
\begin{align}\label{kstr}
\xymatrix{
 \omega_X\otimes\omega_{X\dual}\ar[rr]^{KS}\ar[dr]_{KS'\otimes \id} & & \Omega_{S/T}\\
 & \lp\omega_{X\dual}\rp\dual\otimes\Omega_{S/T}\otimes\omega_{X\dual} \ar[ur]_{f\otimes g\otimes h\mapsto f(h)\otimes g} & \\
 }
\end{align}

In Subsection \ref{cexa}, we explicitly describe the Kodaira-Spencer morphism in coordinates in an example over $\IC$.  In our example, we will be able to explicitly give the kernel of $KS$.  For more general cases, the kernel of $KS$ is provided in \cite{la}; we provide the relevant result from \cite{la} in Subsection \ref{lakern}.  However, while the abstract result from \cite{la} is useful, it is also important (for our particular situation) to keep in mind the example in coordinates that we work out in Subsection \ref{cexa}.

\subsubsection{Useful example over $\IC$}\label{cexa}
Like in Subsection \ref{gmex}, consider $\A$ over $\hn$ over $\IC$.  We first describe the polarization (for $z$ in $\hn$)
\begin{align*}
\lambda:=\lambda_z: A_z\rightarrow A_z\dual\nonumber
\end{align*}
following section 3.3 of \cite{shab} and section 4 of \cite{shar}.  Define $\langle , \rangle$\index{$\langle ,\rangle$} to be the non-degenerate symmetric $\IR$-bilinear pairing on $\IC^{2n}$ defined by
$\langle x, y\rangle  =  \sum_{i=1}^n x_i\bar{y}_i + \bar{x}_i y_i$
for all vectors $x = (x_i)$ and $y = (y_i)$ in $\IC^{2n}$.  Let $\zj\s, \ej\s, {\zjp}\s, {\ejp}\s$

\index{$\zj\s$, zj$\backslash$s}\index{$\ej\s$, ej$\backslash$s}\index{${\zjp}\s$, zjp$\backslash$s}\index{${\ejp}\s$, ejp$\backslash$s} denote the elements of $\IC^{2n}$ such that for any vector $v\in \IC^{2n}$
\begin{eqnarray}
\langle v, \zj\s\rangle & = & \bj(v)\nonumber\\
\langle v, {\zjp}\s\rangle & = & \bjp(v)\nonumber\\
\langle v, \ej\s\rangle & = & \aj(v)\nonumber\\
\langle v, {\ejp}\s\rangle & = & \ajp(v)\nonumber,
\end{eqnarray}
where $\bj, \aj, \bjp, \ajp$ are defined as in Equations (\ref{ab1}) through (\ref{ab2}).

Let $(, )$\index{$(,)$} be the pairing on $\IC^{2n}$ defined by
$(x, y) = \sum_{i=1}^n\bar{x}_i y_i$
for all vectors $x = (x_i)$ and $y = (y_i)$ in $\IC^{2n}$.  
Then we can write $\alpha_i, \beta_i, \alpha_i', \beta_i'$ as the sum of its $\IC$-anti-linear and $\IC$-linear pieces as
\begin{eqnarray}
\alpha_i(\bullet) & = & (\bullet, e_j^*)+(e_j^*, \bullet)\nonumber\\
\beta_i(\bullet) & = & (\bullet, z_j^*)+(z_j^*, \bullet)\nonumber\\
\alpha_i'(\bullet) & = & (\bullet, e_j'^*)+(e_j'^*, \bullet)\nonumber\\
\beta_i'(\bullet) & = & (\bullet, z_j'^*)+(z_j'^*, \bullet)\nonumber
\end{eqnarray}

Consider the Riemann form $E_z$\index{$E_z$} on $\IC^{2n}$ defined on the lattice $L_z$ by
$E_z(p_z(x), p_z(y)) :=  \tr_{\IC/\IR}(x\eta_n y^*)$
for all $x, y \in \OK^{2n}$, where $p_z(\bullet)$ is defined as in section \ref{reviewsym}.

Viewing $A_z^{\check{ }}$ as 
\begin{align}\label{du2}
\IC^{2n}/L_z^*, 
\end{align}
where $L_z^*$ is the lattice spanned by $z_i^*, e_i^*, z_i'^*, e_i'^*$, we have that $\lambda$ is the $\IC$-linear map defined by
$\langle \lambda(u), v\rangle = E_z(u, v)$
for all $u, v$ in $\IC^{2n}$.  In particular, we see that
$\lambda(e_i)  =  2z_i^*+ \tr_{\IC/\IR}(\bar{\alpha})z_i'^*$ and
$\lambda(e_i') =   \tr_{\IC/\IR}(\alpha)z_i^*+2\alpha\bar{\alpha}z_i'*.$
So for $u_j\in\IC$, we have that for $j=1, \ldots, n$,
\begin{eqnarray}
\lambda((0, \ldots, u_j, \ldots, 0)) & = & u_j\lambda\left(\frac{1}{\alpha-\bar{\alpha}}(\alpha e_j - e_j')\right)\nonumber\\
& = & \frac{u_j}{\alpha-\bar{\alpha}} \left(\left(2\alpha-\tr_{\IC/\IR}(\alpha)\right)z_j^*+\alpha\lp\tr_{\IC/\IR}(\bar{\alpha})-2\bar{\alpha}\rp z_j'^*\right)\nonumber\\
& = & u_j\lp z_j^*+\alpha z_j'^*\rp\nonumber
\end{eqnarray}
and similarly,
$\lambda((0, \ldots, u_{j+n}, \ldots, 0))  =  u_j\lambda\left(\frac{1}{\bar{\alpha}-\alpha}(\bar{\alpha} e_j - e_j')\right)
=  u_j(z_j^*+\bar{\alpha} z_j'^*).$

Now let $w_1, \ldots w_n, w_{n+1}, \ldots, w_{2n}$\index{$w_j$} be coordinates on $\IC^{2n}$ in terms of the vectors 
$z_1^*+\alpha z_1'^*, \ldots, z_n^*+\alpha z_n'^*,  z_{1}^*+\bar{\alpha} z_{1}'^*,\ldots, z_{n}^*+\bar{\alpha} z_{n}'^*,$
respectively.  Then for $1\leq j\leq n$,
$\lambda^*(dw_j)  =  du_j$.
\index{$dw_j$}
We are now in a position to look at the action of the Kodaira-Spencer morphism $KS$ on the basis $du_i\otimes dw_j$, $i, j =1, \ldots {2n}$ for $\omega_{A_z}\otimes\omega_{A_z\dual}$.  We will do this by tracing $du_i\otimes dw_j$ step-by-step through the composition of maps (\ref{kstrace}) and (\ref{kstr}).  For $i = 1, \ldots, n$ and $j = n+1, \ldots, 2n$, 
\begin{align}
du_i\otimes dw_j\stackrel{\mbox{(incl.)}}{\longmapsto} du_i\otimes dw_j & \stackrel{\nabla\otimes \id}{\longmapsto}
\lp\sum_{k=1}^n\lp\beta_k+\bar{\alpha}\beta'_k\rp\otimes dz_{ik}\rp\otimes dw_j\label{tr11}\\
& \stackrel{\mod\omega_{A_z}}{\longmapsto} \sum_{k=1}^n\lp\lp\bullet, z_k^*+\bar{\alpha}z_k'^* \rp \otimes dz_{ik}\rp\otimes dw_j\label{antiks}\\ 
& \mapsto \sum_{k=1}^n\lp\lp dw_k\rp\dual\otimes dz_{ik}\rp\otimes dw_j\mapsto dz_{i,j-n}.\label{antiks2}
\end{align}
(Note that in lines (\ref{du2}), (\ref{antiks}), and (\ref{antiks2}), we are implicitly associating the following via their canonical identifications: $\IC^{2n}$, the tangent space of $A_z\dual$, $\omega_{A_z\dual}\dual$, $\Hom_{\IC\mbox{-anti-lin}}(\omega_{A_z}, \IC)$, and $\bar{\omega}_{A_z}$.)

So from lines (\ref{tr11}) through (\ref{antiks2}), we see that
\begin{align}\label{ksdz}
KS(du_i\otimes dw_j) &= dz_{i,j-n}&\mbox{for } 1\leq i\leq n \mbox{ and } n+1\leq j\leq 2n.
\end{align}  Similarly, by tracing $du_i\otimes dw_j$ through the composition of maps  (\ref{kstrace}) and (\ref{kstr}), one finds that
\begin{align}
KS(du_i\otimes dw_j)& =  \left\{ 
\begin{array}{l l}
  dz_{j,i-n} & \quad \mbox{if $n+1\leq i\leq 2n$ and $1\leq j\leq n$}\\
  0 & \quad \mbox{if $1\leq i, j\leq n$}\\
  0 & \quad \mbox{if $n+1\leq i, j\leq 2n$} \end{array} \right. 
\end{align}

We thus find that 
$\IKS\index{$\IKS$, IKS}:=\ker (KS)$
is spanned by
\begin{align}\label{Ispan}
\left\{ du_i\otimes dw_j - du_j\otimes dw_i \rvert 1\leq i, j\leq 2n  \right\}\cup\left\{du_i\otimes dw_j\rvert 1\leq i, j\leq n\mbox{ or }n+1\leq i, j\leq 2n\right\}.
\end{align} 
This is a special case of the more general result given in Lemma \ref{lankern}.  In Section \ref{cinf}, the above description of $\IKS$ will be important in our consideration of the action of $\nabla$ on $\IKS$ (defined through the product rule).

\begin{lem}
KS induces an isomorphism (the ``Kodaira-Spencer isomorphism")
\begin{align}\nonumber
KS: \omega_{\univav}\otimes \omega_{\univav\dual}/\IKS\isomto \Omega_{\mathcal{H}/\IC}.
\end{align}
Associating $\Omega_{\mathcal{H}/\IC}$ with the complex vector space $\IC^n_n$ via
\begin{align}\nonumber
dz_{ij}\leftrightarrow e_{ij}:=\mbox{ the $n\times n$ matrix with $1$ in the $ij$-th position and zeroes everywhere else},
\end{align}
we have that for all $\gamma\in K^c\subset GL_{2n}(\IC)$, $h\in\omega_A\otimes\omega_{A\dual}$, and $g\in\Omega_{\mathcal{H}/\IC}$,
\begin{align}
KS\lp(\rho_{St}\otimes\rho_{St})(\gamma\otimes\gamma)(h)\rp & = \tau(\gamma)g.\label{repsagr}
\end{align}
\end{lem}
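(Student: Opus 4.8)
The plan is to extract both assertions from the explicit coordinate computation of $KS$ carried out just above, together with the functoriality of the canonical maps entering the definition of $KS$ in \eqref{kstrace}--\eqref{kstr}.

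\emph{The isomorphism.} By \eqref{ksdz} and the case analysis immediately following it, $KS$ carries the basis $\{du_i\otimes dw_j\}_{1\le i,j\le 2n}$ of $\omega_{\univav}\otimes\omega_{\univav\dual}$ onto the elements $dz_{i,j-n}$ (for $1\le i\le n<j\le 2n$), $dz_{j,i-n}$ (for $1\le j\le n<i\le 2n$), and $0$ otherwise. Letting $(i,j)$ run over $\{1,\dots,n\}\times\{n+1,\dots,2n\}$ shows that the image of $KS$ is all of $\Omega_{\mathcal{H}/\IC}$, so $KS$ is surjective, and each generator of $\IKS$ listed in \eqref{Ispan} is visibly annihilated by $KS$, so $\IKS\subseteq\ker KS$. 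I would then compare dimensions: $\dim_{\IC}\bigl(\omega_{\univav}\otimes\omega_{\univav\dual}\bigr)=4n^2$, while from \eqref{Ispan} one checks $\dim_{\IC}\IKS=3n^2$ (the two ``diagonal-block'' families $\{du_i\otimes dw_j:1\le i,j\le n\}$ and $\{du_i\otimes dw_j:n+1\le i,j\le 2n\}$ each span an $n^2$-dimensional space, the antisymmetric combinations coupling the two blocks span a further $n^2$-dimensional space, and these three subspaces involve disjoint sets of basis vectors). Since $4n^2-3n^2=n^2=\dim_{\IC}\Omega_{\mathcal{H}/\IC}$, the inclusion $\IKS\subseteq\ker KS$ is an equality and the induced map on the quotient is an isomorphism onto $\Omega_{\mathcal{H}/\IC}$.

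\emph{The equivariance \eqref{repsagr}.} Every arrow in \eqref{kstrace} and \eqref{kstr} is canonical: the inclusion $\uo\hookrightarrow\dr$ coming from hypercohomology, the Gauss--Manin connection $\nabla$, the projection $\dr\twoheadrightarrow R^1\pi_*(\mathcal O_{\univav})$, the identification $R^1\pi_*(\mathcal O_{\univav})\isomto(\omega_{\univav\dual})\dual$ from \eqref{abex}, and the contraction against the polarization $\lambda$. Hence each is compatible with the action of an arbitrary automorphism of the family $\A\to\hn$; in particular, $\gamma\in\cpct\subset GL_{2n}(\IC)$ acts on $\hn$ by $z\mapsto\gamma z$ and, covering this, on $\A$, and $KS$ intertwines the resulting action on $\omega_{\univav}\otimes\omega_{\univav\dual}$ with that on $\Omega_{\mathcal{H}/\IC}$. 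It remains to identify these actions in the chosen coordinates: on $\omega_{\univav}$ (basis $du_i$) and on $\omega_{\univav\dual}$ (basis $dw_j$, with $\lambda\s(dw_j)=du_j$ as computed above) the $\gamma$-action is through the standard representation, which accounts for $(\rho_{St}\otimes\rho_{St})(\gamma\otimes\gamma)$ on the source; and on $\Omega_{\mathcal{H}/\IC}$, differentiating $z\mapsto\gamma z$ and transporting along $dz_{ij}\leftrightarrow e_{ij}$ yields precisely the representation $\tau$. Substituting into \eqref{ksdz} then gives \eqref{repsagr}.

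The conceptual content is light; the main obstacle is bookkeeping. One must keep the two $GL_n$-factors straight, tracking which automorphy factor ($\mu(\gamma,z)$ or $\lambda(\gamma,z)$) governs which factor and respecting the $\pm$-splittings $\uo=\uo^+\oplus\uo^-$, $\dr=\dr^+\oplus\dr^-$ together with \eqref{gmonplus}--\eqref{gmonminus} (it is these splittings that force $\tau$ to decompose as a product of two $GL_n$-representations), so that the identification of the induced action on $\Omega_{\mathcal{H}/\IC}$ with $\tau$ is unambiguous. Once the dictionary between $du_i$, $dw_j$, $dz_{ij}$ and the coordinates on $\hn$ is pinned down as in the worked example, both \eqref{ksdz} and \eqref{repsagr} are direct substitutions.
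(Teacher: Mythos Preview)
Your proposal is correct and takes essentially the same approach as the paper, which simply says ``The isomorphism follows directly from the above example. Equation \eqref{repsagr} also follows from the above example, combined with the definitions of $\rho_{St}$ and $\tau$ from Section \ref{reviewsym}.'' Your dimension count ($4n^2-3n^2=n^2$) and functoriality argument spell out what the paper leaves implicit, but the underlying idea---read both the isomorphism and the equivariance off the explicit values of $KS(du_i\otimes dw_j)$ computed in \eqref{ksdz} and the surrounding lines---is the same.
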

\begin{proof} The isomorphism follows directly from the above example.  Equation (\ref{repsagr}) also follows from the above example, combined with the definitions of $\rho_{St}$ and $\tau$ from Section \ref{reviewsym}.  
\end{proof}

For convenience, we often associate $\omega_A$ with $\omega_{A\dual}$ via the isomorphism
\begin{align}
\lambda^*&: \omega_{A\dual}\rightarrow\omega_A\nonumber\\
&dw_i\mapsto du_i\nonumber
\end{align}
coming from the polarization $\lambda$.  

\begin{lem}
$\nabla(\IKS)\mod\lp \Split(C^\infty)\otimes\Omega(C^\infty)\rp$
is contained in
$\IKS\otimes \Omega(C^\infty).$
\end{lem}
\begin{proof}
We prove this lemma by showing that 
\begin{align}
\nabla(v)\subset I\otimes \Omega\label{basshow}
\end{align}
for all the elements $v$ in the basis for $\IKS$ given in (\ref{Ispan}).  Using the product rule given in (\ref{prodrule}), we have that for all $x$ and $y$ in $\omci$,
\begin{align}\label{prule}
\nabla(x\otimes y-y\otimes x) = \sigma(\nabla(x)\otimes y) + x\otimes \nabla(y) - \sigma(\nabla(y)\otimes x) - y\otimes\nabla(x).
\end{align}
By (\ref{Ispan}), we know that for all $x$ and $y$ in $\omci$
$x\otimes y - y\otimes x$
lies in $\IKS$.
Therefore, for all $z$ and $w$ in $\omci$,
$z\otimes \nabla(w) - \sigma(\nabla(w)\otimes z)\mod\lp \splci\otimes\Omega(C^\infty)\rp$
lies in $\IKS\otimes\Omega$.  Consequently, from Equation (\ref{prule}), we see that
$\nabla(x\otimes y - y\otimes x)$
lies in $\IKS\otimes \Omega\mod\lp \splci\otimes\Omega(C^\infty)\rp$ for all $x$ and $y$ in $\omci$.  In particular, for all $i$ and $j$,
$\nabla(du_i\otimes du_j - du_j\otimes du_i)\mod\lp \splci\otimes\Omega(C^\infty)\rp$
lies in $\IKS\otimes \Omega.$  Now, we check that
$\nabla(du_i\otimes du_j)\mod\lp \splci\otimes\Omega(C^\infty)\rp$
lies in $\IKS\otimes\Omega(C^\infty)$ whenever $1\leq i, j\leq n$ or $n+1\leq i,j\leq 2n$.  From Equations (\ref{duiexp1}) through (\ref{vecdui2}), we see that for $1\leq i, j\leq n$,
$\nabla(du_i)\mod \Split(C^\infty)\otimes\Omega(C^\infty)$
is contained in the submodule of $\omci\otimes\Omega(C^\infty)$ generated by the set of elements
$\left\{du_k\otimes w\rvert 1\leq k\leq n\mbox{ and } w\in\Omega(C^\infty)\right\};$
and similarly,
$\nabla(du_{i+n})\mod \Split(C^\infty)\otimes\Omega(C^\infty)$
is contained in the submodule of $\omci\otimes\Omega(C^\infty)$ generated by the set of elements
$\left\{du_{k}\otimes w\rvert n+1\leq k\leq 2n\mbox{ and } w\in\Omega(C^\infty)\right\}.$
Therefore, we see that for $1\leq i, j\leq n$,
$\nabla(du_i\otimes du_j)\mod \Split(C^\infty)\otimes\Omega(C^\infty)$
is contained in the submodule of $\omci\otimes\omci\otimes \Omega(C^\infty)$ generated by
$\left\{du_k\otimes du_{l}\otimes w\rvert 1\leq k, l\leq n\mbox{ and } w\in\Omega(C^\infty)\right\},$
which is a submodule of $\IKS\otimes \Omega(C^\infty)$ (by \ref{Ispan}).  Similarly, we see that for  $1\leq i, j\leq n$,
$\nabla(du_{i+n}\otimes du_{j+n})\mod \Split(C^\infty)\otimes\Omega(C^\infty)$
is contained in the submodule of $\omci\otimes\omci\otimes \Omega(C^\infty)$ generated by
\begin{align*}
\left\{du_{k+n}\otimes du_{l+n}\otimes w\rvert n+1\leq k+n, l+n\leq 2n\mbox{ and } w\in\Omega(C^\infty)\right\},
\end{align*}
which is a also submodule of $\IKS\otimes \Omega(C^\infty)$ (by \ref{Ispan}).  Since we have now shown that (\ref{basshow}) holds for all $v$ in the basis for $\IKS$ given in (\ref{Ispan}), the proof of the lemma is complete.\end{proof}

\subsection{The kernel of the Kodaira-Spencer isomorphism}\label{lakern}

We try as much as possible in this section to be consistent with the notation of \cite{la}.  Let $S_0$ be the base scheme over which $\KSh$ is defined, and let $\peltup$  be the tuple associated to a morphism $S\rightarrow \KSh$.

\begin{lem}[\cite{la}, part of Proposition 2.3.4.2]\label{lankern} The kernel $\IKS$ of $KS: \omega_{A/S}\otimes\omega_{A\dual/S}\rightarrow\Omega_{S/S_0}$
contains the submodule $J_{KS}$ of $\omega_{A/S}\otimes\omega_{A\dual/S}$ generated by the set of elements
\begin{align}
\left\{\lambda\s(y)\otimes x-\lambda\s(x)\otimes y\rvert x, y\in \omega_{A\dual/S}\right\}\cup\nonumber\\
\left\{ (i(b)\s x)\otimes y - x\otimes((i(b\check))\s y)  \rvert x\in\omega{A/S}, y\in \omega_{A\dual/S}, b\in \OK \right\}\label{ksgen},
\end{align}
Furthermore, if $S\rightarrow\moduli$ is \'etale, then the map
$KS: \omega_{A/S}\otimes\omega_{A\dual/S}/J_{KS}\rightarrow\Omega_{S/S_0}$
is an isomorphism.
\end{lem}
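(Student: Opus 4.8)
The plan is to treat the two assertions separately: first the containment $\JKS\subseteq\IKS=\ker(KS)$, which is a formal consequence of the behaviour of the Gauss--Manin connection recorded above, and then the fact that, when $S\to\moduli$ is \'etale, the induced surjection $\omega_{A/S}\otimes\omega_{A\dual/S}/\JKS\twoheadrightarrow\Omega_{S/S_0}$ is an isomorphism, which will be a rank count together with a deformation-theoretic input. Two structural facts will be used throughout. One is the functoriality of $\nabla$ under endomorphisms, \eqref{endnb}: $\nabla(f\s v)=(f\s\otimes\id)\nabla v$. The other is that cup product, twisted by $\lambda$, endows $\dr(A/S)$ with an $\mathcal{O}_S$-bilinear, alternating, $\nabla$-horizontal pairing $\langle\,,\,\rangle$ for which $\omega_{A/S}$ is maximal isotropic, so that $\langle\,,\,\rangle$ descends to a perfect pairing $\omega_{A/S}\times(\dr(A/S)/\omega_{A/S})\to\mathcal{O}_S$; this identifies $\dr(A/S)/\omega_{A/S}\isomto(\omega_{A\dual/S})\dual$ compatibly with \eqref{abex} and with $\lambda\s$, and it is the coordinate-free avatar of the Riemann form $E_z$ of Subsection \ref{cexa}.

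For the first family of generators, $\lambda\s(y)\otimes x-\lambda\s(x)\otimes y$ with $x,y$ local sections of $\omega_{A\dual/S}$, I would differentiate the isotropy relation $\langle v,v'\rangle=0$ for $v,v'\in\omega_{A/S}$: horizontality gives $\langle\nabla v,v'\rangle+\langle v,\nabla v'\rangle=0$, and since $\langle\,,\,\rangle$ is alternating this reads $\langle\nabla v,v'\rangle=\langle\nabla v',v\rangle$ in $\Omega_{S/S_0}$. Unwinding \eqref{kstrace}--\eqref{kstr}, the composite $KS\circ(\lambda\s\otimes\id)\colon\omega_{A\dual/S}\otimes\omega_{A\dual/S}\to\Omega_{S/S_0}$ is exactly $x\otimes y\mapsto\langle\nabla\lambda\s(x),y\rangle$ (the pairing being the descended one above), so it is symmetric and therefore vanishes on the antisymmetric elements $y\otimes x-x\otimes y$; equivalently $KS$ kills the first family. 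For the second family, $(i(b)\s x)\otimes y-x\otimes(i(\bar b)\s y)$ with $b\in\OK$ (the Rosati involution restricting to complex conjugation on $\OK$), the Rosati condition says precisely that $\langle\iota(b)\s v,w\rangle=\langle v,\iota(\bar b)\s w\rangle$, while \eqref{endnb} gives $\nabla\circ\iota(b)\s=(\iota(b)\s\otimes\id)\circ\nabla$ and $\iota(b)\s$ preserves $\omega_{A/S}$; combining these in the formula $KS(a\otimes b')=\langle\overline{\nabla a},b'\rangle$ yields $KS((\iota(b)\s x)\otimes y)=KS(x\otimes(\iota(\bar b)\s y))$. Thus $\JKS\subseteq\ker(KS)$ and $KS$ factors through $Q:=\omega_{A/S}\otimes\omega_{A\dual/S}/\JKS$.

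Finally, for $S\to\moduli$ \'etale I would show $Q\to\Omega_{S/S_0}$ is an isomorphism by a rank count plus surjectivity. Since $\lambda$ has degree prime to $p$, $\lambda\s\colon\omega_{A\dual/S}\isomto\omega_{A/S}$; identifying both tensor factors with $W:=\omega_{A\dual/S}$, which is locally free of rank $g=2n$ and splits as $W^+\oplus W^-$ with $W^{\pm}$ of rank $n$, a fibrewise computation with the explicit generators (as in Subsection \ref{cexa}) shows that $\JKS$ becomes $\wedge^2 W+(W^+\otimes W^+\oplus W^-\otimes W^-)$, so $Q$ is locally free of rank $n^2$. This equals $\dim_\IC\hn=\dim\moduli$, hence the rank of $\Omega_{S/S_0}$ (which is the pullback of $\Omega_{\moduli/S_0}$ along the \'etale map); so it suffices to prove $KS$ surjective, and since this may be checked after base change it suffices to treat $S=\moduli$. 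Here I would invoke Grothendieck--Messing (or Serre--Tate) deformation theory: the relative tangent sheaf $T_{\moduli/S_0}$ is canonically the sheaf of $\mathcal{O}_\moduli$-homomorphisms $\omega_{A/\moduli}\to\dr(A/\moduli)/\omega_{A/\moduli}$ that are symmetric and $\OK$-linear, i.e.\ the $\mathcal{O}_\moduli$-dual of $Q$, the duality pairing being the one induced by $KS$; dualizing gives $T_{\moduli/S_0}\isomto Q\dual$, whence $KS$ is an isomorphism onto $\Omega_{\moduli/S_0}$. I expect this last identification to be the main obstacle: the inclusion $\JKS\subseteq\IKS$ is soft, but showing $\JKS$ exhausts the kernel is exactly the content of Proposition 2.3.4.2 of \cite{la}, so in the write-up I would cite that result rather than reproduce the crystalline argument.
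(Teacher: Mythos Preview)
The paper does not actually prove this lemma; it is stated with attribution to \cite{la} (Proposition 2.3.4.2) and is then used without further argument. The only supporting material in the paper is the explicit verification over $\IC$ in Subsection~\ref{cexa}, where the kernel is computed directly in coordinates (equation~\eqref{Ispan}) and seen to match the general description.

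Your sketch is sound and is essentially the standard argument one would expect to find in \cite{la}. The containment $\JKS\subseteq\IKS$ via horizontality of the polarization pairing and functoriality of $\nabla$ (your use of \eqref{endnb}) is exactly right, and the rank count plus the deformation-theoretic identification of the tangent sheaf with the symmetric, $\OK$-linear liftings of the Hodge filtration is the correct mechanism for the isomorphism when $S\to\moduli$ is \'etale. You are also right that the nontrivial content lies in the last step, and your instinct to cite \cite{la} there rather than reproduce the crystalline argument matches precisely what the paper does.
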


Since $\lambda$ is a prime-to-$p$ polarization, the morphism
$\lambda\s: \omega_{A\dual/S}\rightarrow\omega_{A/S}$
is an isomorphism.

Therefore, we obtain the following Corollary of Lemma \ref{lankern}.
\begin{cor}\label{lankerncor}
Suppose $S\rightarrow\moduli$ is \'etale.  Then, the Kodaira-Spencer morphism $KS$ induces an isomorphism, which by abuse of notation we also denote $KS$:\begin{align}\label{KSwker}
KS: \sym^2(\omega_{A/S})/J_{KS}\isomto \Omega_{S/S_0}.
\end{align}
(Here, we associate $J_{KS}$ with its image in $\sym^2(\omega_{A/S})$.)
\end{cor}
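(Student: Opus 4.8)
The plan is to deduce this directly from Lemma~\ref{lankern} by transporting everything along the polarization isomorphism $\lambda\s\colon\omega_{A\dual/S}\isomto\omega_{A/S}$ and then quotienting out the antisymmetry relations. First I would form the composite
\begin{align*}
\widetilde{KS}\colon \omega_{A/S}\otimes\omega_{A/S}\xrightarrow{\ \id\otimes(\lambda\s)^{-1}\ }\omega_{A/S}\otimes\omega_{A\dual/S}\xrightarrow{\ KS\ }\Omega_{S/S_0},
\end{align*}
where $(\lambda\s)^{-1}\colon\omega_{A/S}\isomto\omega_{A\dual/S}$ exists because $\lambda$ is prime to $p$. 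Since $S\to\moduli$ is \'etale, Lemma~\ref{lankern} tells us that $KS$ is surjective with kernel $J_{KS}$; hence $\widetilde{KS}$ is surjective with kernel $\widetilde{J}:=(\id\otimes\lambda\s)(J_{KS})$.

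Next I would identify $\widetilde{J}$. Applying $\id\otimes\lambda\s$ to the first family of generators of $J_{KS}$ in \eqref{ksgen} gives the elements $\lambda\s(y)\otimes\lambda\s(x)-\lambda\s(x)\otimes\lambda\s(y)$ with $x,y\in\omega_{A\dual/S}$; since $\lambda\s$ is an isomorphism, as $x,y$ vary these are exactly the elements $a\otimes b-b\otimes a$ with $a,b\in\omega_{A/S}$, i.e.\ the generators of the kernel of the canonical projection $q\colon\omega_{A/S}\otimes\omega_{A/S}\twoheadrightarrow\sym^2(\omega_{A/S})$. Thus $\ker q\subseteq\widetilde{J}$, so $\widetilde{KS}$ factors as $\widetilde{KS}=\overline{KS}\circ q$ for a (necessarily surjective) map $\overline{KS}\colon\sym^2(\omega_{A/S})\to\Omega_{S/S_0}$ with $\ker\overline{KS}=q(\widetilde{J})$. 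Because $\ker q\subseteq\widetilde{J}$, this kernel $q(\widetilde J)$ is precisely the image of $J_{KS}$ in $\sym^2(\omega_{A/S})$ in the sense intended in the statement, so $\overline{KS}$ descends to the claimed isomorphism $\sym^2(\omega_{A/S})/J_{KS}\isomto\Omega_{S/S_0}$.

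Finally, for completeness I would record that the surviving relations --- the images under $q\circ(\id\otimes\lambda\s)$ of the second family of generators in \eqref{ksgen} --- can be rewritten, using the compatibility of $\lambda$ with $\iota$ (the Rosati condition), as the ``$\OK$-balancing'' elements $(i(b)\s x)\cdot y - x\cdot(i(b)\s y)$; this makes the submodule $J_{KS}\subseteq\sym^2(\omega_{A/S})$ explicit, matching the description used later in the paper. The only point that requires a word of care is identifying the quotient $\sym^2(\omega_{A/S})=\omega_{A/S}\otimes\omega_{A/S}/\ker q$ with the submodule of symmetric tensors used elsewhere in the text: that identification needs $2$ to be invertible, which is automatic in our setting (we work over an $(\OK)_{(p)}$-algebra with $\lambda$ prime to $p$, the only exception being $p=2$, where one simply keeps the quotient description). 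I do not expect any real obstacle here: all of the geometric content already sits in Lemma~\ref{lankern}, and this corollary is a bookkeeping reformulation using the polarization isomorphism.
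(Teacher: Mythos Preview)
Your proposal is correct and takes essentially the same approach as the paper: the paper simply states that the corollary follows from Lemma~\ref{lankern} together with the observation (made just before the corollary) that $\lambda\s\colon\omega_{A\dual/S}\to\omega_{A/S}$ is an isomorphism since $\lambda$ is prime to $p$. You have spelled out in detail the bookkeeping that the paper leaves implicit --- transporting along $\id\otimes(\lambda\s)^{-1}$, recognizing the first family of generators as the antisymmetric tensors, and descending to $\sym^2$ --- and your added remark about the quotient-versus-submodule identification of $\sym^2$ when $2$ is a unit is a reasonable point of care that the paper does not comment on.
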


\begin{rmk}
We shall mainly be applying Corollary \ref{lankerncor} in the case where $S = \moduli$ and the map $\moduli\rightarrow\moduli$ is the identity (so the abelian scheme associated to $S\rightarrow\moduli$ is the universal abelian scheme $\A$).  Since the identity map is \'etale, we can indeed apply Corollary \ref{lankerncor} in this situation.  

In the case where $S$ is an arbitrary scheme over $S_0$ and $S\rightarrow\moduli$ is not \'etale, $\IKS$ can be strictly larger than $\JKS$.  For example, consider the case where $\moduli =\hn$ over $S_0=\IC$, $S=\IC$, and $A$ is the abelian variety corresponding to a morphism $S:=Spec(\IC)\rightarrow \hn/\IC$.  In this case, $\Omega_{S/S_0} = 0$, but $\sym^2(\omega_{A/S})/\JKS\neq 0$.
\end{rmk}

As a direct consequence of Corollary \ref{lankerncor}, we have isomorphisms (which are crucial in our construction of the differential operators)
\begin{align}
\Omega_{\moduli/S_0}&\isomto Sym^2(\uo)/J_{KS}\label{ks1}\\
&\isomto\uo^+\otimes\uo^-\label{ks2}
\end{align}
Depending on the situation, we will work sometimes with isomorphism \eqref{ks1} and sometimes with \eqref{ks2}.

\section{Algebraic and analytic $q$-expansions}
\label{qexpnsection}
In this section, we briefly discuss algebraic $q$-expansions, which will be important in later proofs.

\subsection{Fourier expansions}\label{fouriersection}
This section closely follows Section 5 of \cite{shar}.  

For $c\in \IC$ and $X\in \IC^n_n$, let
$\e(c) = \exp(2\pi i c)$,
$\e^n(X) = \e(\tr(X))$, and
$S =S^n = \left\{\sigma\in K^n_n \vert \sigma\s = \sigma\right\}.$  Let $\Gamma\subset GU(\eta_n)$ be a congruence subgroup.  Then there exists a $\ZZ$-lattice $M$ in $S$ such that $\left( \begin{array}{cc}
1 & \sigma \\
0 & 1 \end{array} \right)$ is in $\Gamma$ for each $\sigma$ in $M$.  
Let 
$L' =\left\{h\in S\vert tr(hM)\subset \ZZ \right\}.$
Let $f$ be a holomorphic automorphic form with respect to $\Gamma$ that takes values in a vector-space $X$.  Then because $f(z+\sigma) = f(z)$ for each $\sigma\in M$, $f$ has a {\it Fourier expansion}, i.e. there exist elements $c(h)\in X$ such that
$f(z) = \sum_{h\in L'} c(h) \e^n(hz).$
We write the Fourier expansion of $f$ as
$f(z) = \sum_{h\in S} c(h)\e^n(hz).$
If $n>1$, then $c(h)\neq 0$ only if $h$ is nonnegative definite.

\subsection{The algebraic theory of $q$-expansions and the Mumford object}
This section should be viewed as a user's guide to algebraic $q$-expansions in the PEL moduli problem.  The situation for the $Sp(n)$ moduli problem is similar.  

The algebraic theory of $q$-expansions relies upon the existence of what we shall call ``Mumford objects" or ``Mumford abelian varieties."  Mumford ojects are the higher dimensional generalization of Tate elliptic curves.  Like Tate curves, Mumford abelian varieties arise naturally from a certain semiabelian scheme over toroidal compactifications of the moduli scheme $\moduli = Sh(2V)$ over $\lp\OK\rp_{(p)}$.  For each cusp of $\moduli$, there is a corresponding Mumford object, which lies over the compactification of $Sh(2V)$ at that cusp.  The details of the construction of toroidal compactifications of PEL Shimura varieties is given in \cite{la}.
For Hilbert modular forms, the corresponding toroidal compactifications were constructed in \cite{ra}.  For symplectic modular forms, this is discussed is \cite{CF}.  For details on the discussion in \cite{CF}, see \cite{la}.  Tate curves are often used explicitly in computations and described in detail in coordinates over $\IC$.  The current literature on Mumford objects and algebraic $q$-expansions does not provide a similarly explicit description.  A $q$-expansion principle analogous to the $q$-expansion principle for Tate curves is provided in \cite{la}, however.

\subsubsection{The Mumford object}\label{mumobject}

Let
$V = \mathcal{W}\oplus \mathcal{W}',$
where
$\mathcal{W}, \mathcal{W}' = \cmfield^n.$
Note that
$J = \left( \begin{array}{cc}
 0 &  -1_n\\
1_n & 0 \end{array}\right)$
induces a pairing $\Psi$ on $V$.  The pairing $\Psi$ induces an isomorphism
$\mathcal{W}'\isomto\Hom(\mathcal{W}, \cmfield).$
Define a pairing 
$\Psi': V\times V\rightarrow \IQ$
by
$(x, y)\mapsto \Psi'(x,y) = \tr\left(\delta_{\cmfield}^{-1}\Psi\left(x,y\right)\right)$
for each $x, y\in V$.  Let $L$ be the lattice inside $V$ defined by
$L =\OK^n\oplus\OK^n.$
Then the restriction
$\Psi': L\times L\rightarrow \ZZ$
of $\Psi'$ is a perfect pairing.  Write $W$ and $W'$ to denote $\mathcal{W}\cap L$ and $\mathcal{W}'\cap L$, respectively.

Let $U$ be an open compact subgroup of $GU(n,n)(\adeles_f) = G(\adeles_f)$.  Then
$Sh_G(U)  = G(\IQ)^+\backslash\hn\times G(\adeles_f)/U\supseteq \Gamma\backslash \hn,$
with
$\Gamma = G^+(\IQ)\cap U.$  Let $P$ be the stabilizer of $W'$ in $GU(n,n)$, where $GU(n,n)$ acts on $W'$ (viewed as row vectors) on the right.  Then each matrix in $P$ is of the form
$\left( \begin{array}{cc}
A & B\\
0 & C\end{array}\right).$
Let $N$ be the unipotent radical of $P$, and let
$H = N\cap \Gamma.$
Then $H$ is an upper-triangular, unipotent subgroup of $GU(n,n)(K)$.  It is a simple computation to show that $N$ is contained in the group of matrices of the form
$\left( \begin{array}{cc}
1_n & B\\
0 & 1_n\end{array}\right)$
with $B$ a Hermitian matrix with entries in $K$.  Thus, we can choose the lattice $M$ used to construct Fourier expansions in Subsection \ref{fouriersection} so that
$H= \left( \begin{array}{cc}
1_n & M\\
0 & 1_n\end{array}\right).$
Note that $H$ maps $W$ to $W'$.

Let $H\dual$ be the dual lattice of $H$.  That is, each element of $H\dual$ may be viewed as a $\ZZ$-linear map $H\rightarrow \ZZ$ given by
$h\mapsto \tr\left(gh\right)\subseteq \ZZ.$
for some (non-unique) matrix $g$.  A simple computation shows that we may associate $H\dual$ with the lattice $L'$ in $K^n_n$, where $L'$ is defined in terms of $M$ as above.

The data $L = W\oplus W'$ above is called the (zero-dimensional) ``cusp at infinity."  The zero-dimensional cusps are in one-to-one correspondence with the elements of $P(\IQ)\backslash G(\adeles_f)/K$.  There is not a canonical way to associate a lattice to each $g$ in $P(\IQ)\backslash G(\adeles_f)/K$, but a systematic way is the following.  Write
\begin{align*}
G(\adeles_f) &= \coprod_i G(\IQ)g_i K\\
g &= \gamma g_i k.
\end{align*}
Define the lattice at the cusp corresponding to $g$ to be
$L_g  = (L\otimes\hat{\ZZ})g_i\cap V, W_g  = W\gamma\cap L,
W'_g  = W'\gamma\cap L.$
Then $H$ is defined accordingly, corresponding to our new choice of cusp.  Note that the symmetric space is $\coprod_{\Gamma_i}\hn$.  Each $g_i$ tells us which $\Gamma_i$ and $\gamma$ says which Borel.  In the following discussion of Mumford objects and $q$-expansions, one can consider any of the cusps $g$, even though we write our discuss in terms of the notation for the cusp at infinity; the reader wishing to work with a different cusp $g$ should simply replace $L$ with $L_g$, $W$ with $W_g$, etc.  As explained in \cite{la}, for a toroidal compactification of $\moduli$, the completion along the boundary stratum for the zero-dimensional cusp $[g]$ lies over $\mathrm{Spf}R$, where $R$ is the ring

\begin{small}
\begin{align*}
\qexpringb^{\Gamma_g} = \left\{\sum_{h\in H\dual_{\geq 0}}a_hq^h\mid h\geq 0\mbox{, } a_h\in\lp\OK\rp_{(p)}\mbox{, and } a(\gamma h \gamma\s) \mbox{ for all } (\gamma, \gamma\s)\in \Gamma_g\right\}.
\end{align*}
\end{small}
There is a semiabelian scheme over the toroidal compactification of $\moduli$.  By passing to $\qexpringp$, we obtain an abelian variety $\mathcal{G}_H$ over $\moduli/\Spec(\qexpringp)$, which gives the Mumford object at the cusp $H$, a semi-abelian scheme lying over 
\begin{align*}
\qexpringp = \left\{\sum_{h\in H\dual}a_h q^h\mid a_h\in\lp\OK\rp_{(p)} \mbox{ and } a_h = 0 \mbox{ if } h<<0\right\}
\end{align*}
and denoted $\tateav.$

We now briefly discuss the analytic situation over $\IC$ to provide some context and motivation for our upcoming (algebraic) description of Mumford objects.  Recall that for an elliptic curve $E$, the analytic construction 
$E = \IC/\ZZ+\tau\ZZ\xrightarrow{\exp} \IC^\times/q(\ZZ),$
where $q$ is the map
\begin{align*}
q: \ZZ\rightarrow \IC^\times\\
n\mapsto e^{2\pi i n},
\end{align*}
gives a map from an elliptic curve to the complex points of the Tate curve.  We now do the analogue of this in our situation.

Setting
$X = \frac{Z+Z\s}{2}$ and 
$Y = \frac{-i\lp Z-Z\s\rp}{2},$
we have $Z = X+iY$, with $Y>0$.  Now we express $\hn$ in terms of $H$: $\hn = H\otimes \IR + \lp H\otimes \IR \rp_{>0} i\subset H\otimes \IC.$
(For a set $S$ of Hermitian matrices, we write $S_{>0}$ to denote the set of positive definite matrices in $S$.)  Let $\tau\in\hn\subset H\otimes \IC$.  Then we may express the abelian variety $A_\tau$ as
$W'\otimes\IC/L_{\tau},$
where $L_{\tau}$ is the $\ZZ$-lattice generated by $W'\otimes 1$ and $\tau(W)\subset W'\otimes\IC$.  So we have a commutative diagram of Lie groups (where the map $\underline{q}$ is defined implicitly through the $\exp$ map, and $\underline{q}(H\dual) = \exp(\tau(W))$)
\begin{footnotesize}
\begin{align*}
\xymatrix{
W'\otimes\IC/L_{\tau}\ar[dd]\ar[rr]_--{\exp}&&W'\otimes\IC^\times/\underline{q}(H\dual) = W'\otimes\mathbb{G}_m(\IC)/\underline{q_\tau}(H\dual)\ar[dd]\\
\\
\hn\subset H\otimes \IC\ar[rr]_--{\exp}&&H\otimes\IC^\times = \Spec\lp\qexpringp{\otimes}_{\OK}\IC\rp^{\an}
 }
\end{align*}
\end{footnotesize}

The quotient $W'\otimes \gm(\IC)/\underline{q}(H\dual)$ above is the set of complex points of an abelian variety.

We now describe the Mumford object more explicitly, analogous to the description of $\tate_{\mathfrak{a}, \mathfrak{b}}(q)$ in \cite{kaCM}.
\begin{rmk}For the reader trying to understand \cite{kaCM} in the context of our description of the general situation, we provide the following dictionary between Katz's notation and the notation we will use for the general situation.
\begin{align*}
\mathfrak{a}&\leftrightarrow W'\\
\mathfrak{b}&\leftrightarrow W\\
\mathfrak{a}^{-1}\mathfrak{b}^{-1}&\leftrightarrow H\\
\mathfrak{ab}&\leftrightarrow H\dual.
\end{align*}
\end{rmk}

We define a $\ZZ$-linear morphism
\begin{align}\label{uqmorph}
\uq: W\rightarrow W'\otimes\gm
\end{align}
from $W$ to the torus $W'\otimes \gm$ lying over $\OK((q, H\dual))$ to be the composition of morphisms
\begin{align*}
W\xrightarrow{w\mapsto \eval_w} \Hom_{\ZZ}(H, W')\isomto H\dual\otimes W \rightarrow W'\otimes \gm.
\end{align*}
(By $\eval_w$, we mean the map $h\mapsto h(w)$ in $\Hom_{\ZZ}(H, W')$.)

The ``Mumford abelian variety at the cusp $H$" is the algebraification of the rigid analytic quotient
\begin{align}
\uq(W)\backslash\lp(W')\dual\otimes\gm\rp.
\end{align}
We denote the Mumford abelian variety by $\tateav$ or $\mbox{Mum}_H(q)$.  The construction of the Mumford abelian variety is discussed in \cite{mumcf} and in \cite{la}.

The Mumford abelian variety $\tateav$ has a canonical PEL structure.  The canonical endomorphism
\begin{align*}
\iota_{can}: \OK\rightarrow End_{\qexpringp}(\tateav)
\end{align*}
is defined by
\begin{align*}
\alpha: L\rightarrow L\\
l\mapsto \alpha\cdot l
\end{align*}
for each element $\alpha$ of $\OK$.  The dual abelian variety is
$\tateav\dual = \mbox{Mum}_{{W'}\dual\oplus W\dual}(q),$
i.e. the algebraification of the rigid analytic quotient
$\uq({W'}\dual)\backslash\lp W\dual\otimes\gm\rp.$
The canonical isomorphism
$W'\isomto W\dual$
induced by the pairing $\Psi$ induces a canonical polarization
$\lambda_{can}: \uq(W)\backslash\lp W'\otimes\gm\rp\rightarrow \uq({W'}\dual)\backslash\lp W\dual\otimes\gm\rp$
of $\tateav$.  The natural exact sequence
$0\rightarrow W'\otimes \prod_l\varprojlim_n\mu_{l^n}\rightarrow \prod_l T_l(A)\rightarrow W\otimes\hat{\mathbb{Z}}\rightarrow 0$
induces a canonical level $\cpctopen$ structure
$\alpha_{can}: V\otimes \adeles_f\isomto \prod_l T_l(A)\otimes\IQ$
modulo the action of $\cpctopen$.  At times, we shall write $\tateav$ to mean the tuple $(\tateav, \lambda_{can}, \iota_{can}, \alpha_{can})$.

Observe that there is a canonical isomorphism
$\omega_L: \lie(\tateav)\isomto \lie(W'\otimes \gm) = W'\otimes \qexpringp$
Dualizing the morphism $\omega_L$, we obtain a canonical isomorphism
\begin{align}\label{omegaw}
\omega_{can}: {W'}\dual\otimes\qexpringp\isomto \uo,
\end{align}
which gives a canonical element of $\EE$.
There is a similar isomorphism on $\tateav\dual$:
\begin{align}\label{lb}
W\dual\otimes \qexpringp=\lie(\gm\otimes W\dual) \isomto \lie(\tateav\dual)
\end{align}

We now revisit the Kodaira-Spencer morphism in the context of $\tateav$.  Recall that for an abelian variety $A$ the Kodaira-Spencer isomorphism identifies derivations of $\OM$ with pairings of $\uo_A$ and $\lie(A\dual)$, i.e. with elements of $\Lie(A)\otimes \Lie(A\dual)$.  We provide a concise reminder of the Kodaira-Spencer isomorphism for an abelian variety $A$, reviewing precisely the details that we will need in our discussion of the situation for $\tateav$.  Recall the exact sequence
$0\rightarrow\uo\rightarrow\dr\rightarrow\lie(A\dual)\rightarrow 0$
and the Gauss-Manin connection (from which the Kodaira-Spencer morphism is constructed)
$\nabla: \dr\rightarrow\dr\otimes\Omega.$
Each
$D\in T_{\moduli/\OK} =\underline{\mbox{Der}}\lp\OM, \OM\rp$
defines a morphism
$\nabla(D): \dr\rightarrow\dr,$
which induces a morphism
$KS(D): \uo\rightarrow \lie(A\dual)$
defined to be the composition of maps
$\uo\hookrightarrow\dr\xrightarrow{\nabla(D)}\dr\twoheadrightarrow \lie(A\dual).$
The map $D\mapsto KS(D)$
defines the Kodaira-Spencer morphism
$T_{\moduli/\OK}\rightarrow\Hom_{\OM}\lp\uo, \lie(A\dual)\rp\cong \lie(A)\otimes \lie(A\dual).$
Dualizing gives 
\begin{align*}
\Omega\leftarrow\lie(A)\otimes\lie(A\dual).
\end{align*}
On $\tateav$, the Kodaira-Spencer map is
\begin{align*}
KS: \mbox{Der}\lp\qexpringp, \qexpringp\rp&\rightarrow \lie\lp\mbox{Mum}_{W\oplus W'}(q)\rp\otimes\lie \lp\mbox{Mum}_{{W'}\dual\oplus W\dual}(q)\rp\\&\cong W'\otimes W\dual\otimes\qexpringp
\end{align*}
Given $\gamma\in H\otimes_{\ZZ}\OK$, define
$D(\gamma)\in \mbox{Der}\lp \qexpringp, \qexpringp\rp$
by
\begin{align*}
D(\gamma)\lp \sum_{\alpha\in H\dual} a_{\alpha}q^{\alpha} \rp = \sum_{\alpha\in H\dual} \tr(\alpha\gamma)a_{\alpha}q^{\alpha}.
\end{align*}
Note that there is a natural $\ZZ$-linear morphism
$\phi_H: H\rightarrow W\dual\otimes_{\ZZ}W'.$
Then
$KS(D(\gamma)) = \phi_H(\gamma)\otimes 1$
in $W'\otimes W\dual\otimes \qexpringp$.

For $w\in {W'}\dual$, let $\omega(w)$ denote the image in $\uo$ of $w\otimes 1$ under the morphism \eqref{omegaw}.  For each $v\in W\dual$, let $l(v)$ denote the image of $v\otimes 1$ under the morphism \eqref{lb}.  Then for each $\gamma\in H$, 
$\nabla\left(D(\gamma)\right)\left(\omega(w)\right)\equiv l(w\cdot \gamma)\mod\uo.$
The notation $\omega$ and $l$ has been chosen to be similar to similar maps in \cite{kaCM}.  We further denote by
$\omega^{\pm}(w)$
the projection of $\omega(w)$ onto $\uo^{\pm}$.  We then have that the Kodaira-Spencer isomorphism is the map
$\omega^+(e_i)\otimes \omega^-(e_j)\mapsto (D(e_{ij}))\dual,$
with $e_i, e_j, e_{ij}$ standard basis vectors in $W$ and $H\otimes_{\ZZ}\OK = \left(\OK\right)^{n}_n$, respectively.

Note that by extending scalars, we may consider $\tateav$ over $R\otimes \qexpringp$ for each $\OK$-algebra $R$, and we can extend the above maps to the case of $\tateav$ over $R\otimes\qexpringp$.

\subsubsection{Algebraic $q$-expansions}
We now discuss the key features for our situation.  For a general, in-depth discussion of Fourier-Jacobi expansions, the reader is referred to \cite{la}.
\begin{defi}Let $f$ be an automorphic form of weight $(\rho, V)$ over $R$.  We define the {\bf $q$-expansion of $f$ at the cusp $H$} to be
$f\left((\tateav, \alpha_{can}, \iota_{can}, \lambda_{can})\otimes R, \omega_{can}\otimes R \right)$.
\end{defi}
As noted at the beginning of the section, the $q$-expansions of $f$ lie inside $V\otimes_RR\otimes\qexpringb$.  Furthermore, when working over $\IC$, the analytically defined Fourier coefficients of the function $f^{an}$ on $\hn$ (where we associate the function $f$ of abelian varieties with the function $f^{an}:\hn\rightarrow\IC$ as in Sections \ref{reviewsym} through \ref{agafapproach}) at the cusp $L$ are the same as the algebraically defined $q$-expansion coefficents at the cusp $L$.  That is, if 
$f\left((\tateav, \alpha_{can}, \iota_{can}, \lambda_{can})\otimes \IC, \omega_{can}\otimes \IC \right) = \sum_{h\in H\dual}c(h) q^h,$
then the $h$-Fourier cofficient of $f^{an}$ for each $h\in H\dual$ is also $c(h)$.

In Proposition 7.1.2.15 of \cite{la}, Lan proves the Fourier-Jacobi Principle for automorphic forms on PEL Shimura varieties.  The $q$-expansion principle for modular forms is a special case of this.
\begin{thm}[$q$-expansion Principle, special case of Proposition 7.1.2.15 of \cite{la}]  Let $f$ be an automorphic form on $U(n,n)$ over an $\OK$-algebra $R$ of weight $\rho$ with values in an $R$-module $R\otimes_{\OK} X$ for some $\OK$-module $X$.  
\begin{enumerate}
\item{If $f(\tateav) = 0$ at one cusp on each connected component of $\moduli$, then $f = 0$.}
\item{Let $R_0\hookrightarrow R$ be an $\OK$-subalgebra of $R$.  If $f(\tateav)\in \qexpringp \otimes_{\OK} R \otimes X$ actually lies in $\qexpringp \otimes_{\OK} R_0 \otimes X\hookrightarrow \qexpringp \otimes_{\OK} R \otimes X$ for one cusp in each component of $\moduli$, then there is a unique automorphic form of weight $\rho$ on $U(n,n)$ defined over $R_0$ which becomes $f$ after the extension of scalars $R_0\hookrightarrow R$.}
\end{enumerate}
\end{thm}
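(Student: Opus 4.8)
The plan is to reduce both assertions to the geometry of a toroidal compactification of $\moduli$, so that the statement becomes the instance of Lan's Fourier--Jacobi principle \cite[Prop.~7.1.2.15]{la} specialized to $U(n,n)$ and to the weight $\rho$. I would fix a smooth toroidal compactification $\bar\moduli$ of $\moduli=Sh(2V)$ over $\left(\OK\right)_{(p)}$, use that the universal abelian scheme extends to a semiabelian scheme $\bar\univav$ whose fibers along the boundary strata are the Mumford objects $\tateav$ of Section~\ref{mumobject}, and conclude that $\uo$, $\EE$, and hence the automorphic sheaf $\EEVr$ extend canonically to locally free sheaves on $\bar\moduli$ (which I abusively also denote $\EEVr$, etc.). By the Koecher principle (automatic when $n>1$, and built into the definition when $n=1$ via the condition at the cusps), restriction gives $H^0(\moduli_R,\EEVr)=H^0(\bar\moduli_R,\EEVr)$ for every $\OK$-algebra $R$; under this identification the $q$-expansion of an automorphic form $f$ at a cusp $H$ is the image of the extended section $\bar f$ in the formal completion of $\bar\moduli_R$ along the boundary stratum $Z_H$, whose structure ring is the $q$-expansion ring $\qexpringb^{\Gamma_g}$ of Section~\ref{mumobject}. (That this algebraic $q$-expansion coincides with the analytic Fourier expansion of Section~\ref{fouriersection} is \cite{lanalgan}; this is what makes the phrase ``the cusp $H$'' unambiguous.)

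For Part (1) I would argue component by component. Each connected component $C$ of $\bar\moduli_R$ is integral -- by \cite{la} the components of $\moduli$ and of its toroidal compactification are normal and geometrically irreducible over $\left(\OK\right)_{(p)}$ -- and the boundary stratum $Z_{H_C}$ attached to the chosen cusp $H_C$ of $C$ is a nonempty closed subscheme of $C$. If $f(\tateav)=0$ at $H_C$, then $\bar f$ restricts to $0$ on the formal completion of $C$ along $Z_{H_C}$; picking a point $y\in Z_{H_C}$, faithful flatness of $\mathcal{O}_{C,y}\to\widehat{\mathcal{O}}_{C,y}$ together with local freeness of $\EEVr$ forces the stalk $\bar f_y$ to vanish, so $\bar f$ vanishes on a nonempty open of the integral scheme $C$, hence, by torsion-freeness, on all of $C$. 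Running this over every component gives $\bar f=0$, so $f=0$.

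For Part (2) I would first treat the case in which $R_0\hookrightarrow R$ is faithfully flat (which covers the intended applications; the general case is contained in \cite[Prop.~7.1.2.15]{la}). Flat base change along $R_0\to R$, applied to the proper scheme $\bar\moduli_{R_0}$ and the locally free sheaf $\EEVr$, yields $H^0(\bar\moduli_R,\EEVr)=H^0(\bar\moduli_{R_0},\EEVr)\otimes_{R_0}R$ and, compatibly, exhibits the $q$-expansion map over $R$ as that over $R_0$ tensored with $R$; write $M_0=H^0(\bar\moduli_{R_0},\EEVr)$, so $f\in M_0\otimes_{R_0}R$. Letting $i_1,i_2\colon R\rightrightarrows R\otimes_{R_0}R$ be the two structure maps, I would show $i_1(f)=i_2(f)$: the form $g=i_1(f)-i_2(f)$ over $R\otimes_{R_0}R$ has $q$-expansion $i_1(\mathrm{FJ}_{H_C}(f))-i_2(\mathrm{FJ}_{H_C}(f))$ at each chosen cusp $H_C$, and since $\mathrm{FJ}_{H_C}(f)\in\qexpringp\otimes_{\OK}R_0\otimes X$ by hypothesis -- a submodule on which $i_1$ and $i_2$ agree -- this $q$-expansion vanishes, whence $g=0$ by Part (1) over $R\otimes_{R_0}R$. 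Faithfully flat descent then puts $f$ in the image of $M_0$, i.e.\ $f$ descends to a form over $R_0$; uniqueness is immediate from Part (1), since two forms over $R_0$ with equal $q$-expansions differ by one killed by the $q$-expansion map.

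The main obstacle is not this bookkeeping but the geometric input underneath it: the existence of the toroidal compactification $\bar\moduli$ over $\left(\OK\right)_{(p)}$ together with the canonical extension of $\EEVr$ and the identification of the completion along a boundary stratum with $\mathrm{Spf}$ of the $q$-expansion ring -- the substance of \cite{la}; Lan's comparison \cite{lanalgan} of algebraic with analytic $q$-expansions; and the normality and geometric irreducibility of the components of $\bar\moduli$ over $\left(\OK\right)_{(p)}$ needed for the torsion-freeness step in Part (1). If one wants Part (2) without assuming $R_0\hookrightarrow R$ flat, one additionally needs a more careful analysis of the base change of $H^0(\bar\moduli,\EEVr)$; this too is carried out in \cite{la}.
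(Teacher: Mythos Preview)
The paper does not supply its own proof of this theorem; it simply records the statement as a special case of Lan's Proposition~7.1.2.15 and defers entirely to \cite{la}. Your proposal is a correct and standard sketch of the argument one finds there: extend to a toroidal compactification, invoke Koecher to identify global sections over $\moduli_R$ and $\bar\moduli_R$, read the $q$-expansion as restriction to the formal completion along a boundary stratum, and then use integrality of components for Part~(1) and faithfully flat descent for Part~(2). You have also correctly flagged that the real content lies in the existence and properties of the toroidal compactification and the extension of $\EEVr$, all of which is the substance of \cite{la}, and that the non-flat case of Part~(2) needs the finer base-change analysis carried out there.
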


\section{$p$-adic automorphic forms and the Igusa tower}
\label{padicandIg}
In this section, we review $p$-adic automorphic forms, following the viewpoints of \cite{hida}, \cite{hi05} and \cite{SHL}.  We also introduce the results for $U(n,n)$ analogous to ones in sections 1.9 through 1.12 of \cite{kaCM}.

Let $R$ be an $\OK$-algebra that is separated for the $p$-adic topology, i.e. satisfying
$R\hookrightarrow \varprojlim_m R/p^mR.$
Let $R_0$ be the $p$-adic completion of $R$.  Let $K = K_p\times K^p\subset G(\adeles_f)$ be a compact open subgroup with $K_p\subseteq G(\IQ_p)$ a hyperspecial maximal compact and $K^p\subseteq G(\adeles_f^p)$.

Let $v$ be the prime of $\cmfield$ over $p$ determined by the embedding $\incl_p$.  Let $W = (\OK)_v$, and let $W_m = W/p^mW$ for all nonnegative integers $m$.  Fix a toroidal compactification $\KSh$ of $\ptopmoduli$ over $W$.  

  The theory of $p$-adic automorphic forms is independent of the choice of toroidal compactification. Let $\tilde{H}$ be a lift of a power of the Hasse invariant $H$ to $\KSh$.  For $m$ a positive integer, let
$M_m = \KSh\times_WW_m,$ and let $S_m = M_m\left[\frac{1}{\tilde{H}}\right]$
be the nonvanishing locus of $\tilde{H}$, i.e. the ordinary locus.  Let 
$S_0 = M\left[\frac{1}{\tilde{H}}\right],$
and let $S_{\infty}$ be the formal completion $\varprojlim_m S_m$ of $S_0$ along $S_1$.  Note that $S_m$ is independent of the choice of $\tilde{H}$ as long as $p$ is nilpotent in $W_m$ for all positive $m$.

For $m\geq 0$, let $\pmr$ be the rank $g$ $p$-adic \'etale sheaf
$\pmr = \univav[p^r]^{\mbox{\'et}} = \A[p^r]/\A[p^r]^0$
over $S_m$.  We define $T_{m,r}$ to be the finite \'etale $S_m$-scheme
\begin{align*}
\xymatrix{
T_{m,r} = Isom_{S_m}(\pmr, (\OK/p^r\OK)^n)\ar[dd]_{\pi_{m,r}}\\
\\
S_m
 }
\end{align*}
representing the functor
$(\pi: X\rightarrow S_m)\mapsto\left\{ \mbox{isomorphisms  } \Psi: \pmr \isomto (\OK/p^r\OK)^n \mbox{  over } X \right\}.$
Details about this scheme are given in \cite{hida} and \cite{hi05}; of particular interest to us will be the fact that $\pi_{m,r}$ is an {\it affine} morphism.  Define
$T_{m,\infty} = \varprojlim_r T_{m,r}$
and
$T_{\infty, \infty} = \varprojlim_m T_{m,\infty}.$
Note that the formal scheme $T_{\infty, \infty}$ is an \'etale cover of $S_{\infty}$. Note that $T_{m,r}$ classifies quintuples
$\underline{A} = \left(A, \lambda, \iota, \alpha^p, X[p^r]^{\et}\isomto\left(\OK/p^r\OK\right)^n\right),$
where $(A, \lambda, \iota, \alpha^p)$ is the abelian variety with prime-to-$p$ structure corresponding to a point of $\ptopmoduli$.  Note that 
$\left(\OK/p^r\OK\right)^n\cong \left(\ZZ/p^r\ZZ\right)^g.$
Therefore, the prime-to-$p$ polarization $\lambda$ and the isomorphism
$X[p^r]^{\et}\isomto\left(\OK/p^r\OK\right)^n$
induce isomorphisms
$A[p^r]^0\isomto A\dual[p^r]^0\isomto\mu_{p^r}^g,$
which induces an inclusion
$\alpha_p: \mu_{p^r}^g\hookrightarrow A.$

Let $\uo\mr$ be the pullback of $\pmr$ to $T\mr$, i.e.
$\uo\mr = \lp\pi\mr\s\pmr\rp\otimes\mathcal{O}_{T\mr}.$
For $r\geq m$, there's a universal isomorphism (i.e. the universal object over $T\mr$)
$\Psi_{univ}: \pi\mr\s\pmr\isomto\lp\OK/p^r\OK\rp^n,$
which induces an isomorphism
$\omega_{can} = \Psi_{univ}\otimes\id: \uo\mr\isomto \lp\OK/p^r\OK\rp^n\otimes_{\ZZ_p} \mathcal{O}_{T\mr}.$
For $r\geq m$, the sheaf $\uo\mr$ {\it is} just the pullback of the sheaf of differentials $\uo$ to $T\mr$.  Indeed, the pullback of $\uo$ to $T\mr$ is canonically identified with $\lie(\A\dual)\otimes W_m$; the isomorphism with $\uo\mr$ now follows from the canonical isomorphisms
\begin{align*}
\lie(\A\dual)\otimes W_m\isomto \lie(\A\dual[p^r]^0)\otimes W_m\isomto\lie(\mu_{p^r})^n\otimes W_r\isomto (\OK/p^r\OK)^n\otimes \mathcal{O}_{T_{m,r}}.
\end{align*}

Note that $\omcan$ induces isomorphisms $\omcan^+$ and $\omcan^-$ of $\uo^-$ and $\uo^+$ with ${\OK}_v^n\otimes\mathcal{O}_{T\mr}\cong \mathcal{O}^n_{T\mr}$ and ${\OK}_{\bar{v}}^n\otimes\mathcal{O}_{T\mr}\cong \mathcal{O}^n_{T\mr}$, respectively.
So for the $p$-adic situation (in contrast to the situation over $\IC$), $\omcan$ provides a canonical element of the sheaf $\EE$ introduced earlier.

A $p$-adic automorphic form of weight $\rho_-\otimes\rho_+$ is defined to be a global section of $\left(\mathcal{O}_{T_{\infty, \infty}}^n\right)^{\rho_-}\otimes\left(\mathcal{O}_{T_{\infty, \infty}}^n\right)^{\rho_+}$, where the action of $\OK$ on each copy of $\left(\mathcal{O}_{T_{\infty, \infty}}^n\right)$ is induced by the action on $\uo_-^{\rho_-}\otimes\uo_+^{\rho_+}$ (identifying the two sheaves via $\omcan$).  When we want to eliminate ambiguity about the identification, we shall write $\left(\mathcal{O}_{T_{\infty, \infty}}^-\right)^n$ or $\left(\mathcal{O}_{T_{\infty, \infty}}^+\right)^n$ to mean $\omcan\left(\uo^\mp\right)$, respectively.  We write $V(\rho, R_0)$ to denote the space of $p$-adic automorphic forms of weight $\rho$ over $R_0$.

Above, we have used notation similar to that for the Igusa tower in \cite{hida}, \cite{hi05}, and \cite{SHL}.  To emphasize the analogy with \cite{kaCM}, we shall sometimes use the notation
$\ompa^\pm = \uo^\pm_{T_{\infty, \infty}}$, 
$\OM(\padic) = \mathcal{O}_{T_{\infty, \infty}}$, and 
$\moduli(\padic) = T_{\infty, \infty}.$
We shall denote the space of $p$-adic automorphic forms of weight $\rho = \rho^-\otimes \rho^+$ over a $p$-adically complete and separated $\OK$-algebra $R_0$ by $\padicauto$\index{$\padicauto$, padicauto}.

\subsection{$q$-expansions of $p$-adic automorphic forms}
In this section, we discuss $q$-expansions of $p$-adic automorphic forms and the $q$-expansion principle for $p$-adic automorphic forms.  This is also covered in \cite{SHL}, which cites \cite{hida}.

By extending scalars from $\qexpringp$ to the $p$-adic completion\\
$R_0 \hat{\otimes}_{\OK}\qexpringp$ of $R_0\otimes_{\OK}\qexpringp$, we obtain the Mumford object $(\tateav, \lambda_{can}, \alpha_{can}, \iota_{can})$ over the $p$-adic ring ${ R_0 \hat{\otimes}_{\OK}}\qexpringp$.  (Note that by construction, the isomorphism $\omcan$ from the previous section, viewed over $\tateav$ is the same as the isomorphism $\omega_{can}$ in \eqref{omegaw}.

So we obtain a $q$-expansion homomorphism
$FJ$ from the space of $p$-adic automorphic forms with values in an $R_0$-module $X$ to ${ R_0 \hat{\otimes}_{\OK}}\qexpringp\otimes_{R_0}X$:
\begin{align*}
f\mapsto f\left(\left(\tateav, \lambda_{can}, \alpha_{can} = \alpha_{can}^p\times\left(\alpha_{can}\right)_p, \iota_{can}\right)_{{ R_0 \hat{\otimes}_{\OK}}\qexpringp}, \omega_{can}\right)
\end{align*}

\begin{defi}When $R_0$ has no $p$-torsion, we define the space of $p$-adic automorphic forms defined over $R_0\otimes \IQ$ to be $V(\rho, R_0)\otimes_{{\OK}_v}K_v$.
\end{defi}

The $q$-expansion homomorphism extends to a $q$-expansion homomorphism
\begin{align*}
FJ': V(\rho, R_0)\otimes_{{\OK}_v}K_v \rightarrow { R_0 \hat{\otimes}_{\OK}}\qexpringp\otimes K_v\otimes_{R_0}X.
\end{align*}
We now state the $q$-expansion principle for $p$-adic automorphic forms.  This is the analogue of Theorem 1.9.9 of \cite{kaCM} and Corollary 1.9.17 of \cite{kaCM}.
\begin{thm}[Theorem 2.3.3 of \cite{SHL}, which cites \cite{hida}]  The $q$-expansion homomorphisms have the following properties.
\begin{enumerate}
\item{The $q$-expansion homomorphisms $FJ$ and $FJ'$ are injective, and when $R_0$ has no $p$-torsion, the cokernel of $FJ$ and $FJ'$ has no $p$-torsion.}
\item{${FJ'}^{-1}(R_0\otimes X) = V(\rho, R_0)$.}
\end{enumerate}
\end{thm}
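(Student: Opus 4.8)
The plan is to reduce the whole statement to a single assertion in characteristic $p$ --- injectivity of the reduction mod $p$ of the $q$-expansion map --- and then to obtain everything else by $p$-adic d\'evissage, exactly as in \S1.9 of \cite{kaCM}. First I would collect the formal inputs. Since $p$-adic automorphic forms are by definition global sections of the locally free sheaf $\left(\mathcal{O}_{T_{\infty,\infty}}^n\right)^{\rho_-}\otimes\left(\mathcal{O}_{T_{\infty,\infty}}^n\right)^{\rho_+}$ and the Mumford object is constructed functorially over $\qexpringp$, formation of $V(\rho,-)$ and of $FJ$ commutes with base change; writing $V(\rho,R_0)=\varprojlim_m V(\rho,R_0/p^m)$ and using that $\pi_{m,r}$ is affine reduces base-change questions to finite level (the needed compatibility of the cohomology of these sheaves with the reductions $W\to W_m$ being part of the setup in \cite{hida}, \cite{SHL}). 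In particular $FJ\bmod p$ is identified with the $q$-expansion map over $R_0/p$, and $V(\rho,R_0)/pV(\rho,R_0)\hookrightarrow V(\rho,R_0/p)$; moreover, when $R_0$ has no $p$-torsion the sheaf stays flat, so $V(\rho,R_0)$ and $R_0\,\hat\otimes_{\OK}\qexpringp$ are $p$-torsion free; and since $K_v$ is flat over ${\OK}_v$, it suffices to prove the injectivity and torsion statements for $FJ$ and then tensor up to $FJ'$.

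The heart of the matter is the mod-$p$ $q$-expansion principle: a global section $f$ of the above sheaf on $T_{\infty,\infty}\times_W\kappa$ whose pullback along the Mumford object at a cusp is $0$ must itself be $0$. I would deduce this from two geometric facts. First, each connected component of the ordinary locus $S_1$ (equivalently, of $\KSh\times W_1$) contains one of the cusps at which $FJ$ is formed --- this follows from the theory of arithmetic toroidal compactifications in \cite{la}, the cusps being ordinary. Second, Hida's irreducibility theorem for the Igusa tower: $T_{1,\infty}\to S_1$ is pro-\'etale over the smooth, hence normal, $\kappa$-scheme $S_1$, and $T_{1,\infty}$ is irreducible over each connected component of $S_1$. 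So, restricted to such a component, $T_{1,\infty}$ is integral, and a global section of a locally free sheaf on an integral Noetherian scheme that vanishes in the nonempty formal neighborhood of the cusp stratum vanishes identically; one passes to $\infty$-level by writing a $p$-adic form as a limit of sections of the coherent sheaves on the finite-level $T_{1,r}$. Combined with the base-change identification above, this gives that $FJ\bmod p\colon V(\rho,R_0)/p\to(\,\cdot\,)/p$ is injective for \emph{every} $\OK$-algebra $R_0$.

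The rest is formal. Injectivity of $FJ$: if $FJ(f)=0$ then $FJ(f)\equiv 0\bmod p$, so $f\equiv 0\bmod p$, i.e.\ $f=pf_1$ with $f_1\in V(\rho,R_0)$ ($p$-torsion-freeness when $R_0$ has no $p$-torsion, reducing to that case mod $p^m$ in general); since $R_0\,\hat\otimes_{\OK}\qexpringp$ is $p$-torsion free, $FJ(f_1)=0$, and iterating puts $f\in\bigcap_m p^m V(\rho,R_0)=0$ by $p$-adic separatedness. Torsion-freeness of the cokernel: if $pg=FJ(f)$ then $FJ(f)\equiv 0\bmod p$, so $f=pf_1$, whence $pg=pFJ(f_1)$ and $g=FJ(f_1)$ lies in the image. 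Tensoring with $K_v$ gives both statements for $FJ'$. For part (2): $V(\rho,R_0)\subseteq{FJ'}^{-1}(R_0\otimes X)$ is immediate since the coefficients of $FJ(f)$ already lie in $R_0\,\hat\otimes_{\OK}\qexpringp\otimes_{R_0}X$; conversely, given $f\in V(\rho,R_0)\otimes_{{\OK}_v}K_v$ with integral $q$-expansion, write $f=p^{-N}f_0$ with $f_0\in V(\rho,R_0)$ and $N\ge 0$ minimal; if $N\ge 1$ then $FJ(f_0)=p^N FJ'(f)\equiv 0\bmod p$, so $f_0\equiv 0\bmod p$ and $f_0=pf_1$, contradicting minimality --- hence $N=0$ and $f\in V(\rho,R_0)$.

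I expect the genuine obstacle to be the geometric input in the second paragraph: controlling the connected components of the Igusa tower via Hida's irreducibility theorem and its interplay with the cusps of the toroidal compactification of \cite{la}. Once the mod-$p$ $q$-expansion principle is in hand, everything else is base change and $p$-adic bookkeeping.
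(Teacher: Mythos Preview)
The paper does not give its own proof of this theorem: it is stated as a citation of Theorem~2.3.3 of \cite{SHL}, which in turn attributes the result to \cite{hida}. So there is no ``paper's own proof'' to compare against; the author is simply importing the $q$-expansion principle as a black box.

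That said, your proof strategy is the correct one and matches the standard argument in the cited sources (and the analogous Theorem~1.9.9 and Corollary~1.9.17 of \cite{kaCM}). The two geometric inputs you isolate --- irreducibility of the Igusa tower over each connected component of the ordinary locus (Hida), and the fact that each such component meets a cusp stratum of the toroidal compactification --- are exactly the ingredients used in \cite{hida} and \cite{SHL}, and the $p$-adic d\'evissage you sketch (mod-$p$ injectivity $\Rightarrow$ injectivity, torsion-freeness of cokernel, and the integrality statement~(2)) is the standard bootstrap. One small point: in your vanishing argument on the integral scheme $T_{1,\infty}$, the precise mechanism is that the local ring at the cusp injects into its completion (normality/excellence), so vanishing of the formal $q$-expansion forces vanishing in an open neighborhood, hence everywhere by integrality; your phrasing is correct but slightly elliptical. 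You are also right that the genuine content lies in Hida's irreducibility theorem --- everything else is formal.
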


\subsubsection{Map from automorphic forms over a $p$-adic ring $R_0$ to $p$-adic automorphic forms over $R_0$}

\begin{thm}[((2.2.7) in \cite{SHL}), analogue of Theorem 1.10.15 of \cite{kaCM}]
The homomorphism
$f\mapsto \tilde{f}$
from the space of weight $\rho$ level $\alpha$ automorphic forms to the space of weight $\rho$ level $\alpha^p$ $p$-adic automorphic forms defined by
$\tilde{f}(X, \lambda, \iota, \alpha) = f(X, \lambda, \iota, \alpha, \omega_{can})$
preserves $q$-expansions.
\end{thm}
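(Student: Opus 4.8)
The plan is to unwind both sides of the claimed identity and reduce everything to the single compatibility recorded in the parenthetical remark just before the theorem: the canonical trivialization $\omega_{can}$ of $\uo$ built from the Igusa-tower data, pulled back to the Mumford object, coincides with the canonical trivialization \eqref{omegaw} attached to the torus $W'\otimes\gm$. Granting this, the argument is formal.

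First I would recall that, by definition, the algebraic $q$-expansion of $f$ at the cusp $H$ is
\[
f\bigl((\tateav,\lambda_{can},\iota_{can},\alpha_{can})\otimes R_0,\ \omega_{can}\otimes R_0\bigr)\ \in\ V\otimes_{R_0}\qexpringb,
\]
with $\omega_{can}$ the element of $\EE$ coming from \eqref{omegaw}. On the $p$-adic side, the $q$-expansion (Fourier--Jacobi) homomorphism $FJ$ sends $\tilde f$ to
\[
\tilde f\bigl(\tateav,\ \lambda_{can},\ \alpha_{can}^p\times(\alpha_{can})_p,\ \iota_{can},\ \omega_{can}\bigr),
\]
the Mumford object being taken over the $p$-adic ring $R_0\hat{\otimes}_{\OK}\qexpringp$ and $(\alpha_{can})_p$ being its canonical Igusa structure (the inclusion $\mu_{p^r}^g\hookrightarrow\tateav$ coming from the torus part, as in Section~\ref{mumobject}). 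Applying the defining formula $\tilde f(X,\lambda,\iota,\alpha)=f(X,\lambda,\iota,\alpha,\omega_{can})$, this equals the value of $f$ on the same Mumford object, its canonical PEL structure, and the $p$-adically-defined $\omega_{can}$. So the two $q$-expansions differ at most in which ``$\omega_{can}$'' is used.

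It then remains to identify those two trivializations, which I expect to be the only genuine point. Here the argument is that both are produced by the same semiabelian degeneration. The Mumford abelian variety is by construction the algebraification of $\uq(W)\backslash\bigl((W')\dual\otimes\gm\bigr)$, so its connected part is the torus $(W')\dual\otimes\gm$, and \eqref{omegaw} is precisely the dual of the induced isomorphism on Lie algebras. On the other hand, $\tateav[p^\infty]^0$ is the subgroup $\mu_{p^\infty}^g$ of that same torus, and the Igusa trivialization $\omega_{can}$ is obtained, exactly as in the chain of canonical isomorphisms displayed in Section~\ref{padicandIg}, from $\lie(\tateav\dual)\otimes W_m\isomto\lie(\tateav\dual[p^r]^0)\otimes W_m\isomto\lie(\mu_{p^r})^n\otimes W_r\isomto(\OK/p^r\OK)^n\otimes\mathcal{O}_{T_{m,r}}$, whose first map again comes from that inclusion. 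Passing to the limit over $m,r$ and using the prime-to-$p$ polarization to pass between $\uo$ and the Lie algebra of the dual (exactly as in the definition of $\omega_{can}$ in Section~\ref{padicandIg}) shows the two trivializations agree. Since $q$-expansions on both sides are read off coefficientwise in $\qexpringb$, respectively its $p$-adic completion, and this identification is the identity on the coefficient ring, the two $q$-expansions coincide, which is the assertion. Well-definedness of $\tilde f$ as a $p$-adic automorphic form is the content of the cited result ((2.2.7) of \cite{SHL}), which I would simply invoke.
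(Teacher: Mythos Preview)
The paper does not supply its own proof of this theorem; it is stated with attribution to (2.2.7) of \cite{SHL} (as the analogue of Theorem 1.10.15 of \cite{kaCM}) and immediately followed by the next subsection. Your argument is therefore not being compared to a proof in the paper but rather fills in what the paper outsources.

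That said, your approach is exactly the one the paper has set up for the reader. The parenthetical remark you single out---that the Igusa-tower $\omega_{can}$, viewed over $\tateav$, is the same as the trivialization \eqref{omegaw}---is indeed the entire content, and the paper records it precisely so that this theorem becomes a tautology once one unwinds the two $q$-expansion maps. Your reduction of both sides to ``evaluate $f$ at $(\tateav,\lambda_{can},\iota_{can},\alpha_{can},\omega_{can})$'' with the only possible discrepancy being which $\omega_{can}$ is meant, and then invoking that compatibility, is the intended (and correct) argument.
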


\subsection{Frobenius and the unit root splitting}

In this section, we give a splitting
\begin{align}\label{rootsp}
\dr = \uo\oplus\uroot
\end{align}
over $\ig$ analogous to the $\ci$-splitting
$\dr = \uo\oplus\bar{\uo}.$
The splitting \eqref{rootsp} will be indispensable for the construction of the $p$-adic differential operators.  Most of the material in the section is essentially covered in section 1.11 of \cite{kaCM}, with trivial generalizations.  However, we have provided details not given in \cite{kaCM}.

Let $X$ be an abelian variety of PEL type over an $\OK$-algebra $R$ in which $p$ is nilpotent, and suppose that each of the geometric fibers of $X/R$ is ordinary.  So there is an inclusion
$\alpha_p:\mu_p^g\hookrightarrow X.$
Let $\hat{X}$ denote the formal group of $X$, and let $H_{can}$ be the canonical subgroup of $X$, i.e. the kernel of multiplication of by $p$ in $\hat{X}$.  Then $H_{can} = \alpha_p\lp\mu_p^g\rp$.    Let
$X' = X/H_{can},$
and let
$\pi: X\rightarrow X'$
be the projection map.  When $p = 0$ in $R$, $X' = X^{(p)}$, where $X^{(p)}$ denotes the scheme over $R$ obtained from $X$ by extension of scalars $F_{abs}: R\rightarrow R$, and $\pi$ is the relative Frobenius morphism:

\begin{align}\nonumber
\xymatrix{
X\ar[ddr]_{\pi}\ar[ddrrr]^{F_{abs}}\ar[ddddr]& & &\\
&& &\\
&X^{(p)}\ar[rr]\ar[dd]& & X\ar[dd]\\
&&\square &\\
&R\ar[rr]_{F_{abs}} & &R
 }
\end{align}

Given a morphism
$\alpha_p:\ptors\hookrightarrow X,$
we define
$\alpha_p': \ptors\hookrightarrow X'$
to be the morphism that makes the following diagram commute
\begin{align*}
\xymatrix{
0\ar[r]&\mu_p^g\ar[r]\ar[dd]_{\cong}&\ptors\ar[r]^p\ar@{^{(}->}[dd]^{\alpha_p}&\ptors\ar[r]\ar@{.>}[dd]^{\alpha_p'}& 0\\
&&&&\\
0\ar[r]&H_{can}\ar[r]& X\ar[r] &X'\ar[r]&0
}
\end{align*}
Note that a prime-to-$p$ level structure $\alpha^p$ induces a prime-to-$p$ level structure ${\alpha^p}'$ on $X'$.
We let
\begin{align*}
{\iota^p}':{\OK}_{(p)}\hookrightarrow End(A')\otimes\ZZ_{(p)}
\end{align*}
be the embedding induced by
$\iota^p: {\OK}_{(p)}\hookrightarrow End(A)\otimes\ZZ_{(p)}.$
As Katz explains in Lemma 1.11.6 of \cite{kaCM}, if $(X, \lambda)$ is in $T_{\infty, r}$, then there is a unique polarization $\lambda'$ that reduces $\mod p$ to the polarization $\lambda^{(p)}$ on $X^{(p)}$.  We shall now also use $\pi$ to denote the morphism
$(X, \lambda, \iota , \alpha^p, \alpha_p)\mapsto (X', \lambda', \iota', {\alpha^p}', \alpha_p')$
induced by $\pi$.

Note that, by construction, $\pi$ is compatible with change in base
$R/p^mR\rightarrow R/p^{m-1}R$
induced by projection.  So the morphisms $\pi$ induce a morphism
\begin{align}\label{piu}
\A/W\rightarrow \A'/W
\end{align}
(where $W  = \Spec R$)
over the $p$-adic ring $R$.  Since this is not explicitly mentioned in \cite{kaCM} and could be somewhat confusing to the reader, we note that the map $\pi$ in \eqref{piu} is defined over $R$, not over $\ig$, though $\A$ and $\A'$ lie over $\ig$.  So there is a unique isomorphism
$F:\ig\rightarrow\ig$
such that $\A'$ is the fiber product
\begin{align*}
\xymatrix{
\A'\ar[r]\ar[d]&\A\ar[d]\\
\ig\ar[r]^F&\ig
}
\end{align*}

We now describe the action of $F$ on $q$-expansions, which we will use in the proof of Lemma \ref{keylemma}.
\begin{lem}\label{qgoesqp}
For any $q$-expansion homomorphism
$f\mapsto f(q),$
the action of $F$ on $f$ satisfies
$(Ff)(q) = f(q^p),$
and so,
if
$f(q) = \qexpn,$
then
$(Ff)(q) = \qexpnp.$
\end{lem}

\begin{lem}\label{qqp}
The abelian variety $\tateav'$ and the morphism
$\pi: \tateav\rightarrow\tateav'$
{\it a priori} defined over $(\OK)_v(q, L)$ are in fact defined over $\OK(q, L)$.
\end{lem}
\begin{proof}
Since $\tateav'$ is obtained from $\tateav$ by extension of scalars $q\mapsto q^p$, which is defined over $\OK(q, L)$, $\tateav = Mum_L(q^p)$ is defined over $\OK(q, L)$. It follows from the definition of $\pi$ that $\pi$ is the map making the following diagram commute (where the vertical maps are projection onto the quotient):
\begin{align*}
\xymatrix{
W'\otimes\mathbb{G}_m\ar[rrr]^{\times p}\ar[d] && &W'\otimes\mathbb{G}_m\ar[d]\\
W'\otimes\mathbb{G}_m\ar[rrr]^--{\pi} && &W'\otimes\mathbb{G}_m/p\cdot q(H\dual) = W'\otimes\mathbb{G}_m/q(p\cdot H\dual)
}
\end{align*}
\end{proof}

\begin{rmk}\label{frobcx}
Since $\pi$ and $\tateav'$ are defined over $\OK$, we can extend scalars and consider the map $\pi$ over $\IC$.  In this case, observe that $\pi$ corresponds to the map on lattices
\begin{align*}
p_z(L)&\rightarrow p_{pz}(L),\\
l&\mapsto pl
\end{align*}
i.e. the map
\begin{align*}
\IC/p_z(L)&\rightarrow\IC/p_{pz}(L)\\
x&\mapsto px.
\end{align*}
The morphism $F$ corresponds to the morphism
\begin{align*}
\hn\rightarrow \hn\\
z\mapsto pz.
\end{align*}
\end{rmk}

Define $Fr$ to be the morphism
$Fr = \pi\s: F\s(\dr)\rightarrow\dr.$
Note that $Fr$ defines a ($F$-linear) morphism of $\dr$.
The higher dimensional analogue of Lemma (A2.1) in \cite{ka2} is the following.
\begin{lem}\label{lemka2}
$\pi\s\lp F\s\uo\rp = p\uo.$
\end{lem}

As in \cite{kaCM}, we have the following powerful proposition, which is essential in the construction of the $p$-adic differential operators.
\begin{prop}\label{FrHodgemx}
There is a unique splitting
\begin{align}\label{usplit}
\dr = \uo\oplus\uroot
\end{align}
over $\OM$ such that $\pi\s F\s$ is an isomorphism on $\uroot$ when tensored with $\IQ$ and such that
$\nabla(\uroot)\subseteq \uroot\otimes\Omega$
\end{prop}
The splitting \eqref{usplit} is called the {\it unit root splitting} and $\uroot$ the unit root submodule of $\dr$, as in \cite{kaCM} and \cite{kad}.  \cite{kaCM} notes simply that the version of Proposition \ref{FrHodgemx} in \cite{kaCM} (Theorem 1.11.27) is explained in \cite{kad}.  If $\ig$ were affine (which it is not), then all but the uniqueness statement would follow immediately from \cite{kaCM}.  Although $\ig$ is not affine, one may prove that the statement extends to $\ig$ by proving on an affine cover and glueing.  Details are provided in \cite{E09}.

\subsection{Unit root splitting for the Mumford abelian variety}
  
We now discuss the unit root splitting over $\tateav$, which plays a key role in the proof of Theorem \ref{pdivder}.

\begin{lem}\label{keylemma}\label{unitsplit}(Analogue of \cite{kaCM} Key Lemma (1.12.7))  Upon extension of scalars to $(\OK)_v\otimes_{\OK}\qexpringp$, the elements $\nabla(D(\gamma))(\omega(w))$ lie in $\uroot\subseteq \dr$ for each $\gamma\in H$ and $w\in (W')\dual$.
\end{lem}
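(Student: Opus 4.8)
The plan is to adapt Katz's proof of Key Lemma (1.12.7) in \cite{kaCM} to the Mumford object, using the explicit descriptions of $\tateav$, $\tateav'$ and $\pi\colon\tateav\to\tateav'$ obtained above. I would work over the ring $R := (\OK)_v\otimes_{\OK}\qexpringp$, which is $p$-torsion free and $p$-adically separated, and write $\Phi := \pi\s F\s$ for the $F$-semilinear endomorphism of $\dr := \dr(\tateav/R)$ appearing in Proposition \ref{FrHodgemx} (i.e.\ the composite of the base-change isomorphism $F\s\colon\dr(\tateav)\isomto\dr(\tateav')$ with $\pi\s\colon\dr(\tateav')\to\dr(\tateav)$). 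Recall the unit root splitting $\dr=\uo\oplus\uroot$ of Proposition \ref{FrHodgemx}; recall also $\Phi(\uo)=\pi\s F\s\uo=p\uo$ (Lemma \ref{lemka2}), from which $\Phi(\uroot)\subseteq\uroot$ follows since $\Phi$ is an isomorphism of $\uroot\otimes\IQ$ and $\dr/\uroot\cong\uo$ is torsion free.

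The two inputs I would establish first are an exact formula for $\Phi$ on $\uo$ and an intertwining relation between $\Phi$ and $\nabla$. For the first: by Lemma \ref{qqp} (and the complex picture of Remark \ref{frobcx}) the isogeny $\pi$ lifts to multiplication by $p$ on the uniformizing torus $W'\otimes\gm$, so $\pi\s$ sends the canonical differential $\omega_{can}(w)$ of $\tateav'$ to $p\,\omega_{can}(w)$ of $\tateav$, while $F\s$ sends $\omega_{can}(w)$ to $\omega_{can}(w)$ because the datum in $(W')\dual$ defining $\omega_{can}$ does not involve $q$; hence $\Phi(\omega(w))=p\,\omega(w)$ for every $w\in(W')\dual$. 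For the second: the Gauss-Manin connection is functorial under the $R$-isogeny $\pi$ (so $\nabla\circ\pi\s=(\pi\s\otimes\id)\circ\nabla'$) and the connection $\nabla'$ on $\dr(\tateav'/R)$ is the pullback of $\nabla$ along $F$; since $F$ acts on $q$-expansions by $q\mapsto q^p$ (Lemma \ref{qgoesqp}), so that $D(\gamma)\circ F^{\#}=p\,F^{\#}\circ D(\gamma)$ for each $\gamma\in H$, the twist by $dF$ inserts a factor $p$ and one obtains $\nabla(D(\gamma))\circ\Phi=p\,\Phi\circ\nabla(D(\gamma))$ on $\dr$.

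Granting these, fix $\gamma\in H$ and $w\in(W')\dual$ and set $x := \nabla(D(\gamma))(\omega(w))$. Applying the intertwining relation to $v=\omega(w)$ and using $\Phi(\omega(w))=p\,\omega(w)$ gives $p\,x=\nabla(D(\gamma))(\Phi(\omega(w)))=p\,\Phi(x)$, hence $x=\Phi(x)$ as $R$ has no $p$-torsion. Writing $x=x_{\uo}+x_{\uroot}$ along the unit root splitting, we have $\Phi(x_{\uo})\in p\uo$ and $\Phi(x_{\uroot})\in\uroot$, so $x=\Phi(x)$ forces $x_{\uo}=\Phi(x_{\uo})\in p\uo$; iterating ($x_{\uo}=p^ny$ with $y\in\uo$ gives $x_{\uo}=\Phi(p^ny)=p^n\Phi(y)\in p^{n+1}\uo$) yields $x_{\uo}\in\bigcap_n p^n\uo=0$ by $p$-adic separatedness of the finite free $R$-module $\uo$. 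Thus $x=x_{\uroot}\in\uroot$, which is the claim; comparing with \eqref{omegaw}--\eqref{lb} one sees $x$ is then the unique lift of $l(w\cdot\gamma)$ to $\uroot$.

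I expect the main obstacle to be the careful formulation of the intertwining relation $\nabla(D(\gamma))\circ\Phi=p\,\Phi\circ\nabla(D(\gamma))$: one must identify the Gauss-Manin connection on $\dr(\tateav'/R)$ with the $F$-pullback of the one on $\dr(\tateav/R)$, tracking the factor of $p$ produced by $d(q^p)=p\,q^{p-1}\,dq$, and combine this with the plain functoriality of $\nabla$ under the isogeny $\pi$ over the fixed base $\Spec R$. Once that identity and the clean formula $\Phi(\omega(w))=p\,\omega(w)$ are in place, the remaining $p$-divisibility argument is short; the auxiliary points (flatness and $p$-adic separatedness of $R$, and the compatible pullback of $\uroot$ to $\tateav$) are routine.
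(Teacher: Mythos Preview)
Your argument is correct, and its overall shape coincides with the paper's: both reduce the lemma to the fixed-point equation $\Phi(x)=x$ for $x=\nabla(D(\gamma))(\omega(w))$, and then use $\Phi(\uo)\subseteq p\uo$ together with $p$-adic separatedness to kill the $\uo$-component of a $\Phi$-fixed vector. Where you genuinely diverge is in \emph{how} you verify $\Phi(x)=x$. You do it purely algebraically, via the two identities $\Phi(\omega(w))=p\,\omega(w)$ and $\nabla(D(\gamma))\circ\Phi=p\,\Phi\circ\nabla(D(\gamma))$; both are correct and follow from Lemmas \ref{qgoesqp} and \ref{qqp} (the first because $\pi$ lifts to multiplication by $p$ on $W'\otimes\gm$, the second because $\pi$ is an isogeny over the fixed base and $D(\gamma)\circ F^\#=p\,F^\#\circ D(\gamma)$ on $q$-expansions). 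The paper instead uses that $\tateav$, $\tateav'$, $\pi$ and $F$ are all defined over $\OK((q,H\dual))$, extends scalars to $\IC$, and checks the fixed-point equation there by an explicit lattice computation: it identifies $\nabla(D(\gamma))(du_i)$ with a combination of the horizontal sections $\beta_j+\alpha\beta_j'$ and $\beta_j+\bar\alpha\beta_j'$ from Section~\ref{gmex}, and then reads off from Remark~\ref{frobcx} that $\pi\s F\s$ fixes each such section.

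Your route is cleaner and entirely self-contained; it never leaves the $p$-adic world. The paper's transcendental detour buys something different: it makes visible that the $\Phi$-fixed subspace over $\tateav$ is spanned by exactly the same sections $\beta_j+c\beta_j'$ that describe the antiholomorphic piece over $\IC$ via \eqref{vecdui1}--\eqref{vecdui2}, which is the conceptual link exploited later when comparing the unit-root and Hodge splittings at CM points. One small remark: you should state explicitly that the finite free $R$-module $\uo$ is $p$-adically separated (equivalently that $R=(\OK)_v\otimes_{\OK}\qexpringp$ is), since your iteration hinges on $\bigcap_n p^n\uo=0$; this is true, but worth a line.
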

\begin{proof}
By Lemma \ref{qqp}, $F$, $\tateav'$, and 
$\pi: \tateav\rightarrow\tateav'$
are defined over $\OK(q, L)$.  By Lemmas \ref{lemka2} and \ref{FrHodgemx}, $\pi\s$ has the form 
$\left( \begin{array}{cc}
pA & 0\\
0 & D'
\end{array} \right)$
with respect to fixed bases for $\uo$ and $\uroot$ for some $g\times g$ matrixes $A$ and $D$ with entries in $\OM$ and $D$ invertible.  So it suffices to show that
\begin{align}\label{pifs}
\pi\s\lp F\s\lp\nabla\lp D(\gamma)\rp\lp \omega(w)\rp\rp\rp = \nabla\lp D(\gamma)\rp\lp \omega(w)\rp
\end{align}
for all $\gamma\in H$ and $w\in (W')\dual$.  So it is sufficient to extend scalars to $\IC$ and check \eqref{pifs} over $\IC$.

In our proof, we shall work with $L = \OK^{2n}$, i.e. the cusp at $\infty$, and we note that the proof at other cusps is similar.  (We choose $L = \OK^{2n}$ because working in the context of our explicit examples over $\IC$ - which all used this lattice - provides the most insight.)

Over $\IC$,
$\dr = \Hom_{\ZZ}\lp p_z(L), \IC \rp,$
and $\omega(w)$ is a $\IC$-linear combination of the elements $du_i$.
So we are now reduced to proving an assertion about maps of lattices.  By Remark \ref{frobcx},
$\lp\pi\s l\rp \gamma = l(p\gamma)$
for each $l\in F\s\dr$ and $\gamma\in L_z$.  By \eqref{duiexp1} and \eqref{duiexp2}, $\nabla(D(\gamma))(du_i)$ lies in the subspace of $\dr$ generated by elements of the form
\begin{align}\label{firstform}
\beta_j+\alpha\beta_j'
\end{align}
or
\begin{align}\label{barform}
\beta_j+\bar{\alpha}\beta_j'
\end{align}
$1\leq j\leq n$ (in the notation of \eqref{duiexp1} and \eqref{duiexp2}).  Therefore, it suffices to show that $\pi\s \circ F\s$ fixes each element of the form \eqref{firstform} and each element of the form \eqref{barform}.

By Remark \ref{frobcx} and the definition of $\beta_j$, we see that
$F\s\lp \beta_j\rp: p_{pz}(L)\rightarrow\IC$
is the $\ZZ$-linear map defined by
\begin{align*}
p\cdot z_j&\mapsto 1\\
pz_i&\mapsto 0, i\neq j\\
e_i, e_i', pz_i'&\mapsto 0, \mbox{for all } i.
\end{align*}
Similarly,
$F\s\lp\beta_j\rp: p_{pz}\lp L\rp\rightarrow\IC$
is the $\ZZ$-linear map defined by
\begin{align*}
p\cdot z_j'&\mapsto 1\\
p\cdot z_i'&\mapsto 0, i\neq j\\
e_i, e_i, p\cdot z_i&\mapsto 0, \mbox{for all } i.
\end{align*}
So
$\pi\s F\s\lp\beta_j+c\beta_j'\rp\lp l\rp  = F\s\lp\beta_j +c\beta_j'\rp\lp pl\rp
 = \lp \beta_j + c\beta_j'\rp (l)$
for each $l\in p_z(L)$ and each $c\in \OK$, in particular for $c = \alpha, \bar{\alpha}$.
Therefore,
$\pi\s F\s\nabla\lp D(\gamma)\rp\lp du_i\rp = \nabla\lp D(\gamma)\rp\lp du_i\rp,$
for $1\leq i\leq 2n$.
\end{proof}

\section{$\ci$-differential operators from the perspective of Shimura}
\label{ShimurasDiffOps}
We now review $\ci$-differential operators (acting on automorphic forms on unitary groups) from the perspective of \cite{shar}.  Examples of the material discussed here can be found in \cite{E09}.  In later sections, we reformulate Shimura's differential operators algebreo-geometrically, and then we construct and discuss a $p$-adic analogue of the $\ci$-differential operators.  In Proposition \ref{rho0430}, we show that the $\ci$-differential operators we construct algebreo-geometrically in Section \ref{cinf} are the same as Shimura's differential operators that we discuss in this section.

As in \cite{shar}, for each $z\in\hn$, let $\Xi(z) = (i(\bar{z}-{^tz}), i(z\s-z)).$  Let $T = \IC^n_n$.  Let $\{\ev\}$ be an $\IR$-rational basis of $T$ over $\IC$.  For $u\in T$, let $u_{\nu}$ be defined by
$u=\sum_\nu u_\nu\ev.$
Similarly, for $z\in\hn$, define $z_\nu\in\IC$ by
$z = \sum_\nu z_\nu\ev.$

Let $(\rho, V) = (\rho_-\otimes\rho_+, V_-\otimes V_+)$ be a finite-dimensional representation of $GL_n(\IC)\times GL_n(\IC)$.  Let $e$ be a positive integer.  For finite-dimensional vector spaces $X$ and $Y$, define $S_e(Y,X)$ to be the vector space of degree $e$ homogeneous polynomial maps of $Y$ into $X$, i.e. the space of maps $h$ from $Y$ to $X$ such that $h(a\cdot y) = a^e h(y)$

for each $a\in\IC$ and $y\in Y$.  We let $S_e(Y)$ denote $S_e(Y, \IC)$.  From here on, we identify $S_e(Y,X)$ with $S_e(Y)\otimes X$ via
$h(u)\otimes x\mapsto h(u)x,$
for each function $h$ in $S_e(Y) = S_e(Y, \IC)$ and $x$ in $X$.  Let $Ml_e(Y, X)$ denote the vector space of all $\IC$-multilinear maps
\begin{align*}
\underbrace{Y\times\cdots\times Y}_{e \mbox{ times}} \rightarrow X.
\end{align*}
An element of $Ml_e(Y, X)$ is called {\it symmetric} if 
$g\left(y_{\pi(1)}, \ldots, y_{\pi(e)}\right) = g(y_1, \ldots, y_e)$
for each permutation $\pi$ of $\left\{1, \ldots, e\right\}$.
As explained in Lemma 12.4 of \cite{shar}, for each $h$ in $S_e(Y, X)$, there is a unique symmetric element $h_*$ of $Ml_p(Y, X)$ such that
$h(y) = h_*(y, \ldots, y)$
for all $y$ in $Y$.  We shall associate $S_e(Y, X)$ with a subspace of $Ml_e(Y, X)$ in this way.
We define a representation $(\tau^e, Ml_e(T, \IC))$ of $GL_n(\IC)\times GL_n(\IC)$ as follows: Given $(a, b)\in GL_n(\IC)\times GL_n(\IC)$ and $h\in Ml_e(T, \IC)$,
$[\tau^e(a, b) h](u_1, \ldots, u_e) = h(^ta u_1 b, \ldots, ^ta u_e b).$
Thus, we obtain a representation $\rho^e\otimes\tau$ of $\glnc\times\glnc$ on $Ml_e(T, \IC)\otimes X = Ml_e(T, X)$ via
\begin{align*}
[\rho\otimes\tau^e(g)](h(u)\otimes x) = \tau^e(g)h\otimes \rho(g)x
\end{align*}
for each $g\in\glnc\times\glnc$, $h\in Ml_e(T, \IC)$, and $x\in X$.  We also write $\rho\otimes\tau^e$ to denote the restriction of this representation to $S_e(T, X)$.

For $f\in \ci(\hn, V)$, define operators
\begin{align}\label{defnD}
C, D: \ci(\hn, V)\rightarrow\ci(\hn, S_1(T, V))
\end{align}
by
\begin{eqnarray}\nonumber
(Df)(u) &=& \sum_{\nu} u_\nu\frac{\partial{f}}{\partial{z_\nu}},\nonumber\\
(Cf)(u) &=& (\tau^1(\Xi)Df)(u) = (Df)({^t\xi} u\eta),\nonumber
\end{eqnarray}
respectively. For $e>1$, we write $D^ef$ and $C^ef$ to denote $D(D^{e-1}f)$ and $C(C^{e-1}f)$, respectively.  The functions $D^ef$ and $C^ef$ have symmetric elements of $Ml_e(T, V)$ as their values, which allows us -- as explained in Section 12.1 of \cite{shar} -- to view them as elements of $\ci\left(\hn, S_e(T, V)\right).$  Therefore, the operators $C^e$ and $D^e$ can be viewed as maps
$\ci(\hn, V)\rightarrow\ci(\hn, S_e(T, V)).$

In general, the operators $C^e$ and $D^e$ do not map automorphic forms to automorphic forms.  They are, however useful for constructing a map from the space of automorphic forms of weight $\rho$ to the space of automorphic forms of weight $\rho\otimes \tau$.  Define
\begin{align}\nonumber
(D_\rho f)(u)& = \rho(\Xi)^{-1} D[\rho(\Xi)f](u)\nonumber\\
& = \lp\rho\otimes\tau\rp(\Xi)^{-1} C[\rho(\Xi)f]\nonumber
\end{align}
and, more generally,
\begin{align}\nonumber
(D_{\rho}^e f) = (\rho\otimes\tau^e)(\Xi)^{-1}C^e[\rho(\Xi)f].
\end{align}
The operator $D_{\rho}^e$ satisfies the following properties (\cite{shar}):
\begin{align}\nonumber
D_{\rho}^{e+1}&= D_{\rho\otimes\tau}D^e_{\rho} = D^e_{\rho\otimes\tau} D_{\rho}\nonumber\\
D_{\rho}^e(f\Vert_{\rho}\alpha)&=(D_{\rho}^e f)\Vert_{\rho\otimes\tau^e}\alpha,\label{autaut}
\end{align}
for $\alpha$ in $G$.  So $D_{\rho}^e$ maps automorphic forms of weight $\rho$ to automorphic forms of weight $\rho\otimes\tau^e$.

Let $Z$ be a $\glnc\times\glnc$-stable quotient of $S_e(T)$, and let $\phi_Z$ denote the projection of $S_e(T)\otimes X$ onto $Z\otimes X$.  Then the operator
$D_{\rho}^Z = \phi_Z D_{\rho}^e$
is a map from the space of automorphic forms of weight $\rho$ to the space of automorphic forms of weight $\rho\otimes\tau_Z$, where $\tau_Z$ denotes the restriction of $\tau$ to $Z$.

\section{Some purely algebraic differential operators}
\label{Kapuralg}
We introduce some key ingredients for the construction of both the $\ci$- and the $p$-adic-differential operators.
\subsection{Some algebraic differential operators}
Let $S$ be an $\OK$-scheme.  We assume throughout this section that
$S\rightarrow \moduli$  is an {\it \'etale} morphism.

Recall that by \eqref{gmonplus} and \eqref{gmonminus},
\begin{align}\label{kerinkern}
\nabla(H^{\pm}(A/S))\subseteq H^{\pm}(A/S)\otimes\Omega_{S/T}.
\end{align}
So the Gauss-Manin connection induces a connection (through the product rule (\ref{prodrule}) and the fact that (\ref{kerinkern}) holds)
$\nabla: T^\bullet (H^{\pm}(A/S))\rightarrow T^\bullet (H^{\pm}(A/S))\otimes \Omega_{S/T}.$
Let $\rho = \rho_+\otimes\rho_-$ be a quotient of $\rhost^{\otimes d_1}\otimes\rhost^{\otimes d_2}$ for some $d_1$ and $d_2$.  Applying the product rule (\ref{prodrule}) again, we get a connection
\begin{small}
\begin{align}\nonumber
\nabla: \dr(A/S)^\rho \otimes T^\bullet (H^+(A/S)\otimes H^-(A/S))\rightarrow \dr(A/S)^\rho\otimes T^\bullet (H^+(A/S)\otimes H^-(A/S))\otimes \Omega_{S/T}.
\end{align}
\end{small}

We define a differential operator 
\begin{small}
\begin{align}\nonumber
D^{\rho}_{A/S}: V_{A/S}:=\dr(A/S)^{\rho}\otimes T^\bullet (H^+(A/S)\otimes H^-(A/S))\rightarrow \dr(A/S)^{\rho}\otimes T^{\bullet+1}(H^+(A/S)\otimes H^-(A/S))
\end{align}
\end{small}
to be the composition of maps:
\begin{small}
\begin{align}\label{ddefdiag}
\xymatrix{
 V_{A/S}:=\dr(A/S)^{\rho}\otimes T^{\bullet} (H^+(A/S)\otimes H^-(A/S))\ar[r]^(.66){\nabla}\ar[drddddd]_{D^{\rho}_{A/S}}  &   V_{A/S}\otimes\Omega_{S/T}\ar[dd]^{id\otimes KS}\\
   &\\
   & V_{A/S}\otimes \uo^+(A/S)\otimes \uo^-(A/S)\ar[dd]^{\iota}\\
  & \\
      & V_{A/S}\otimes H^+(A/S))\otimes H^-(A/S)\ar[dd]^{=}\\
     & \\
    & \dr(A/S)^{\rho}\otimes T^{\bullet+1} (H^+(A/S)\otimes H^-(A/S))
 }
\end{align}
\end{small}
\begin{rmk}
Observe that we can similarly construct an algebraic differential operator 
\begin{small}
\begin{align*}
\tilde{D}^{\rho}_{A/S}: \dr(A/S)^\rho\otimes \sym^\bullet(H^+(A/S)\otimes H^-(A/S))\rightarrow\dr(A/S)^\rho\otimes \sym^{\bullet +1}(H^+(A/S)\otimes H^-(A/S))
\end{align*}
\end{small}
(essentially by replacing $T^\bullet$ with $\sym^\bullet$ in the definition of $D^\rho_{A/S}$).
\end{rmk}

In the case where $A = \A$ and $S= \moduli_{R_0}/R_0$, we define
$D^{\rho}:= D^{\rho}_{\A/\moduli_{R_0}}$.
We denote by $D$ the morphism
$D: T^{\bullet}(\dr(A/S))\rightarrow T^{\bullet+2}(\dr(A/S))$
whose restriction to $T^r(\dr(A/S))$ is $D^{\otimes r}$.  

We write $(D^{\rho}_{A/S})^d$ (resp. $(\tilde{D}^{\rho}_{A/S})^d$) to denote $D^{\rho}_{A/S}$ (resp. $\tilde{D}^{\rho}_{A/S}$) composed with itself $d$ times.

Now we give a formula for the action of $D$ in terms of the basis of invariant one-forms $\ai, \bi, \aip, \bip$ in $\dr(\ci)$ over $\IC$.  So that we can consider all representations of interest simultaneously, we consider the representation $\rho$ as a subrepresentation of the tensor algebra.  So we may view $D^{\rho}$ in terms of the restriction to $\dr(\ci)$ of the morphism $D$, which is a morphism
\begin{align*}
T^{\bullet}(\dr(\ci)) \mapsto T^{\bullet+2}(\dr(\ci))
\end{align*}
that is homogeneous of degree two in the sense that $D$ maps $T^r(\dr(\ci))$ to $T^{r+2}(\dr(\ci))$.  The sheaf $T(\dr(\ci))$ {\it is} the sheaf $\mathcal{R}$ of graded non-commutative $\mathcal{O}_{\moduli}(\ci)$-algebras generated by the horizontal sections $\ai, \bi, \aip, \bip$ (defined in Subsection \ref{gmex}) with no relations other than those in the commutative ring $\mathcal{O}_{\moduli}(\ci)$.  Lemma \ref{thexpl} gives the action of $D$ on $\mathcal{R}$ explicitly.
\begin{lem}\label{thexpl}
Viewed as a morphism on $\mathcal{R}$, the action of $D$ is defined for all sections $f$ of $\mathcal{O}_{\moduli}(\ci)$ and nonnegative integers $\kappa_i, \kappa_i', \lambda_i, \lambda_i'$ by
\begin{align}
&D\lp \lp\prod_{1\leq l\leq n}\alpha_l^{\kappa_l}\alpha_l'^{\kappa_l'}\beta_l^{\lambda_l}\beta_l'^{\lambda_l'}\rp f\rp =\nonumber\\
 &\prod_{1\leq l\leq n}\alpha_l^{\kappa_l}\alpha_l'^{\kappa_l'}\beta_l^{\lambda_l}\beta_l'^{\lambda_l'} \sum_{1\leq i, j\leq n}\frac{\partial f}{\partial z_{ij}}\cdot \lp Q_j\cdot P_i\rp,\label{twopimis}
\end{align}

with $P_i$ and $Q_j$ the elements of $\mathcal{R}$ defined by
\begin{align*}
P_i = \alpha_i+\sum^n_{k=1}z_{ik}\beta_k+\bar{\alpha}\alpha'_i+\bar{\alpha}\sum_{k=1}^n z_{ik}\beta_k' \end{align*}
and
\begin{align*}
Q_j = \alpha_i+\sum^n_{k=1}z_{kj}\beta_k+\alpha\alpha'_i+\alpha\sum_{k=1}^n z_{kj}\beta_k'.
\end{align*}
For all $v$ and $w$ in $\mathcal{R}$,
\begin{align}\label{Dcom}
D(v+w) = D(v) + D(w).
\end{align}
\end{lem}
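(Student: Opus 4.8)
\textbf{Proof proposal for Lemma \ref{thexpl}.}

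The plan is to trace the definition of $D$ directly, using the explicit expression of the holomorphic differentials $du_i$ in terms of the horizontal sections $\alpha_i, \beta_i, \alpha_i', \beta_i'$ established in Subsection \ref{gmex}. First I would observe that $D$ restricted to $T^r(\dr(\ci))$ is $D^{\otimes r}$, so it suffices to understand $D$ on generators and on products; the additivity \eqref{Dcom} is immediate from the fact that $D$ is built out of $\nabla$ (which is additive), $\id\otimes KS$, and the inclusion $\iota$, all additive maps. The only content is the Leibniz-type formula \eqref{twopimis}, which I would prove by computing the effect of the composite $\nabla \to (\id\otimes KS) \to \iota$ on a monomial $\lp\prod_l \alpha_l^{\kappa_l}\alpha_l'^{\kappa_l'}\beta_l^{\lambda_l}\beta_l'^{\lambda_l'}\rp f$.

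The key steps, in order: (1) Since the $\alpha_l, \beta_l, \alpha_l', \beta_l'$ are horizontal (Equation \eqref{GM0}), $\nabla$ applied to the monomial is, by the product rule \eqref{prodrule}, equal to $\lp\prod_l \alpha_l^{\kappa_l}\cdots\rp \otimes df$, i.e. all the derivative hits only the coefficient $f$, and $df = \sum_{i,j}\frac{\partial f}{\partial z_{ij}}\,dz_{ij}$. (2) Apply $\id\otimes KS$: by the explicit computation of $KS$ over $\IC$ in Subsection \ref{cexa} (Equations \eqref{ksdz} and the display following it), together with the identification $\Omega_{\mathcal{H}/\IC}\cong \IC^n_n$ via $dz_{ij}\leftrightarrow e_{ij}$, the one-form $dz_{ij}$ corresponds under $KS^{-1}$ to (the class of) $du_i\otimes dw_j$ in $\uo^+\otimes\uo^-$; after identifying $\omega_{A\dual}$ with $\omega_A$ via $\lambda^*$ (so $dw_j\mapsto du_j$) this becomes $du_i\otimes du_j$, i.e. $P_i$-type and $Q_j$-type factors. (3) Apply $\iota$: the inclusion $\uo^+\otimes\uo^-\hookrightarrow H^+\otimes H^-\subset \dr\otimes\dr$ sends $du_i$ and $du_{j}$ to their expressions \eqref{duis1}--\eqref{duis2} in terms of the horizontal basis; this is precisely what produces the factors $P_i$ (from $du_i$, $1\leq i\leq n$) and $Q_j$ (from $du_{j+n}$), so the image is $\lp\prod_l \cdots\rp\cdot\sum_{i,j}\frac{\partial f}{\partial z_{ij}}\,(Q_j\cdot P_i)$. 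I should double-check the placement of $Q_j$ before $P_i$ and the exact index range against the $KS$ formulas, and verify the ordering convention inside $T^{\bullet+1}(H^+\otimes H^-)$ matches the one used in \eqref{twopimis}; a careful bookkeeping of which tensor slot ($+$ or $-$) receives which form is needed since the target ring $\mathcal{R}$ is non-commutative.

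The main obstacle I expect is precisely this bookkeeping of slots and orderings: reconciling the $KS$ output $du_i\otimes dw_j$ (with its $(+,-)$-decomposition and the $\lambda^*$-identification) with the way the product $Q_j\cdot P_i$ sits in the graded non-commutative algebra $\mathcal{R} = T(\dr(\ci))$, including making sure the factor ordering (and any symmetrization coming from the $\sym^e\hookrightarrow T^e$ convention \eqref{syminc}) is consistent. There may also be a normalization constant — the author flags a possible missing factor of $2\pi i$ — coming from the precise definition of $E_z$ and the Riemann form used in defining $KS$ and $\lambda$; I would track the $\tr_{\IC/\IR}$ factors through \eqref{ksdz} to pin this down, but since the statement as written omits it, I would just reproduce the normalization dictated by the diagram \eqref{ddefdiag} and the example in Subsection \ref{cexa}. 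Once the slot-tracking is settled, the proof is a direct substitution and no further analytic input is required.
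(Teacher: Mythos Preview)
Your proposal is correct and follows essentially the same route as the paper: horizontality of $\alpha_i,\beta_i,\alpha_i',\beta_i'$ forces $\nabla$ to hit only the scalar coefficient $f$, the Kodaira--Spencer inverse sends $dz_{ij}$ to $du_{j+n}\otimes du_i$ (your ``$du_i\otimes du_j$'' should read $du_i$ and $du_{j+n}$, as you correctly note two lines later), and the identifications \eqref{duis1}--\eqref{duis2} give $du_i=P_i$, $du_{j+n}=Q_j$; additivity is immediate. The bookkeeping you flag as the main obstacle is exactly the only thing to track, and the paper resolves it by simply quoting the $KS$ formula $dz_{ij}\mapsto du_{j+n}\otimes du_i$ from \eqref{ksdz}.
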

\begin{proof}
Equation \eqref{Dcom} follows immediately from the definition of $D$.  Since  $\ai, \bi, \aip, \bip$ are horizontal,
\begin{align*}
\nabla\lp \lp\prod_{1\leq l\leq n}\alpha_l^{\kappa_l}\alpha_l'^{\kappa_l'}\beta_l^{\lambda_l}\beta_l'^{\lambda_l'}\rp f\rp = \prod_{1\leq l\leq n}\alpha_l^{\kappa_l}\alpha_l'^{\kappa_l'}\beta_l^{\lambda_l}\beta_l'^{\lambda_l'} \sum_{1\leq i, j\leq n}\frac{\partial f}{\partial z_{ij}}\cdot dZ_{ij}.
\end{align*}
Recall from \eqref{ksdz} that the injection
$\Omega\hookrightarrow T^2(\dr)$
defined by the Kodaira-Spencer isomorphism is given by
$dZ_{ij} \mapsto du_{j+n}\otimes du_i.$
By \eqref{duis1}--\eqref{duis2}, we see that in $\mathcal{R}$,
\begin{align*}
du_i &= P_i\\
du_{j+n} &= Q_j,
\end{align*}
for $1\leq i, j\leq n$.  So now the lemma follows from the definition of $D$. \end{proof}
Note that since 
$\nabla(H^{\pm})\subseteq H^{\pm}\otimes \Omega,$
restriction of $D_{\rho}$ to $(H^\pm)^{\rho_\pm}$ gives maps
\begin{align*}
(H^{\pm})^{\rho_{\pm}}\rightarrow (H^\pm)^{\rho_{\pm}\otimes\rhost}\otimes (H^{\mp})^{\rhost}.
\end{align*}

So through the product rule, $D$ induces morphisms
\begin{align*}
(\dr(A/S)^+)^{\rho_+}\otimes (\dr(A/S)^-)^{\rho_-}\longrightarrow (\dr(A/S)^+)^{\rho_+\otimes \rhost}\otimes (\dr(A/S)^-)^{\rho_-\otimes\rhost}.
\end{align*}

\subsection{Some algebraically defined maps on automorphic forms}\label{agdefmaps}

The maps defined in this section will be used in the proofs of Theorems \ref{cialgthm} and \ref{paalgthm}, algebraicity theorems about the differential operators defined in Sections \ref{cioperatordefinition} and \ref{padicoperatordefinition}.  Our construction is completely analogous to the one in \cite{kaCM}, and we follow \cite{kaCM} closely.  Our construction here is a general vector-valued construction that generalizes the scalar-valued one in \cite{kaCM} to our higher-dimensional setting.

We work over an $\OK$-algebra $R_0$.  Let $R$ be an $R_0$-algebra, and let $x$ be an $R$-valued point of the moduli scheme $\moduli_{R_0}$ over $R_0$, corresponding to a morphism 
$\Spec(R)\rightarrow\moduli$
over $R_0$.  Let $\underline{X}$ denote the associated abelian variety $X$ with the associated PEL structure.  Let $\lambda$ be an element of $\EE_{X/R}$.  

Suppose that we are given an $R$-sub-module 
$\Split(X/R)$
 in $\dr(X/R)$ such that the natural map (induced by the inclusions)
\begin{eqnarray}\label{splitting}
\omega_{X/R}\oplus \Split(X/R){\rightarrow} H^1_{DR}(X/R)
\end{eqnarray}
is an isomorphism and such that
$\dr(X/R)^{\pm} \subseteq \Omega_{X/R}^{\pm}\oplus\Split(X/R).$
(For example, when $R = \IC$ and we work in the $\ci$-category, the Hodge decomposition gives us a splitting in which we can take $\Split(X/R)$ to be the sheaf of anti-holomorphic one-forms.)

As earlier, we let $\agauto_{\rho}(\level)(R_0)$ denote the space of automorphic forms over $R_0$ of weight 
\begin{align*}
(\rho=\rho^-\otimes\rho^+, V = V^-\otimes V^+)
\end{align*}
 and level $\level$.  Let $e$ and $d$ be positive integers.  In this section, we define an $R_0$-linear map
\begin{eqnarray}
\partial(\rho, e, x,\lambda, \Split(X/R), d): \agauto_{\rho\otimes\tau^e}(R_0)\rightarrow V^-\otimes V^+\otimes (R^n\otimes R^n)^{\otimes e+d}.\nonumber
\end{eqnarray}

We identify each automorphic form $f$ in $\agauto_{\rho\otimes\tau^e}(R_0)$ with the corresponding global section of $(\uo^-\otimes\uo^+)^{\rho}\otimes(\Omega_{\moduli/R_0}^{\otimes e})$, as in Section \ref{agafapproach}.  The canonical inclusion
$\uo(X/R)^{\pm}\hookrightarrow \dr(X/R)^{\pm}$
and the Kodaira-Spencer map \eqref{ks2} induce inclusions
\begin{align}\label{roin}
(\uo_{X/R}^-\otimes\uo_{X/R}^+)^{\rho}\otimes(\Omega_{\moduli/R_0}^{\otimes e})\hookrightarrow (\dr^-\otimes\dr^+)^{\rho_-\otimes\rho_+}\otimes (\dr^+\otimes\dr^-)^{\otimes e}.
\end{align}
Associate $f$ with its image in $(\dr\otimes\dr)^{\rho}\otimes ({\dr}^+\otimes{\dr}^-)^{\otimes e}$ via the inclusion (\ref{roin}).  Then for each integer $d$, $(D^{\rho})^d(f)$ is a global section of $(\dr^+\otimes\dr^-)^{\rho}\otimes ({\dr}^+\otimes{\dr}^-)^{\otimes e+d}.$  Thus,

\begin{small}
\begin{align}\nonumber
((D^{\rho})^d(f))(x)\in (\dr^-(X/R)\otimes\dr^+(X/R))^{\rho_-\otimes\rho_+}\otimes\lp (\dr^+(X/R)\otimes\dr^-(X/R)\rp^{\otimes d+e}.
\end{align}
\end{small}

The choice of $\lambda$ gives isomorphisms
$\lambda^{\pm}:\omega_{X/R}^{\pm}\isomto R^{n},$
which induce isomorphisms
\begin{align}\nonumber
(\omega_{X/R}^-\otimes\omega_{X/R}^+)^{\rho_-\otimes\rho_+}\otimes (\uo_{X/R}^-\otimes\uo_{X/R}^+)^{\otimes e+d}\isomto V^-\otimes V^+\otimes (R^n\otimes R^n)^{\otimes e+d}
\end{align}

The splitting (\ref{splitting}) gives projections
\begin{align}\label{projec}
\dr(X/R)^{\pm}\rightarrow \omega_{X/R}^{\pm},
\end{align}
which induce projections
$(\omega^{\pm}\oplus \Split(X/R))^{\rho_{\pm}}\rightarrow (\omega^{\pm}_{X/R})^{\rho_{\pm}}.$

The projection (\ref{projec}) also induces a projection
\begin{align*}
(\dr(X/R)^-\otimes \dr(X/R)^+)^{\otimes d+e}\rightarrow (\omega_{X/R}^-\otimes\omega_{X/R}^+)^{\tau^{\otimes (d+e)}}
\end{align*}
and a projection
$(\omega_{X/R}^-\oplus \Split(X/R))^{\rho_-}\otimes(\omega_{X/R}^+\oplus \Split(X/R))^{\rho_+}\rightarrow (\omega_{X/R}^-)^{\rho_-}\otimes(\omega_{X/R}^+)^{\rho_+}.$

We now define the $R_0$-linear map 
\begin{eqnarray}
\partial(\rho, e, x,\lambda, \Split(X/R), d): \agauto_{\rho\otimes\tau^e}(R_0)\rightarrow V^-\otimes V^+\otimes (R^n\otimes R^n)^{\otimes e+d},\nonumber
\end{eqnarray}
as follows.  We define $\partial(\rho, e, x,\lambda, \Split(X/R), d)(f)$ to be the image of $(D^{\rho^+\otimes \rho^-})^d(f)\in (\dr^+)^{\rho^+\otimes \rhost^d}\otimes (\dr^-)^{\rho^-\otimes \rhost^d}$ under the composition of morphisms given by the diagonal map in the commutative diagram \eqref{acom}:

\begin{small}
\begin{align}\label{acom}
\xymatrix{
(\dr^+)^{\rho^+\otimes \rhost^d}\otimes (\dr^-)^{\rho^-\otimes \rhost^d}\ar@{.>}[rrdddd]\ar[rr]^-{g\mapsto g(x)} & &(\omega_{X/R}^-\oplus \Split(X/R))^{\rho_-\otimes\rhost^d}\otimes(\omega_{X/R}^+\oplus \Split(X/R))^{\rho_+\otimes\rhost^d}\ar[dd]^{\mod \Split(X/R)}  \\
 & & \\
& & \mformsxrd\ar[dd]^{\lambda}\\
& &\\
& & V^-\otimes V^+\otimes (R^n\otimes R^n)^{\otimes e+d}
 }
\end{align}
\end{small}

For each  
$R$-submodule $Z$ that is a $GL_n(R)\otimes GL_n(R)$-stable quotient of $\mformsxrd$, define $\phi_Z$\index{$\phi_Z$} to be the projection of $\mformsxrd$ onto $Z$.  Identify $Z$ with $\lambda(Z)$.  Then we define 
$\partial(\rho, e, x,\lambda, \Split(X/R), d)^Z = \phi_Z\circ \partial(\rho, e, x,\lambda, \Split(X/R), d).$

\section{The $\ci$-differential operators}
\label{cinf}
\subsection{Construction of the $\ci$ differential operators}\label{cioperatordefinition}

Later, we will define $p$-adic differential operators through a construction similar to the one in this section.

Let 
\begin{eqnarray}\label{cinfinitysplitting}
H^1_{DR}(\ci) = \omci\oplus \splci
\end{eqnarray}
be the canonical splitting of the Hodge filtration corresponding to the holomorphic and anti-holomorphic one-forms.  Here, $\Split(C^\infty)$ is the sheaf $\overline{\uo(\ci)}$ of anti-holomorphic one forms.  Note that for each derivation $D\in Der(\OM^{\ci}, \OM^{\ci})$,
$\nabla(D)(\overline{\uo(\ci)})\subset\overline{\uo(\ci)}.$

Since 
$\dr(\ci)^{\pm}\subseteq\omci^{\pm}\oplus\splci,$
the splitting \eqref{cinfinitysplitting} induces projections
$\dr(\ci)^{\pm}\rightarrow\omci^{\pm},$
which induces a projection
\begin{align}\label{proji}
\dr(\ci)^\rho\otimes T^\bullet(\dr^+(\ci)\otimes\dr^-(\ci))&\rightarrow \omci^\rho\otimes T^\bullet(\uo^+(\ci)\otimes \uo^-(\ci))\\
&\isomto \omci^\rho\otimes T^\bullet(\Omega(\ci)).\nonumber
\end{align}

As usual, we associate $\uo^\pm$ with its image in $(\dr)^{\pm}$ under the inclusion coming from hypercohomology
\begin{align}\label{uointodr}
\uo\hookrightarrow\dr.
\end{align}
As in (\ref{roin}),  the inclusion \eqref{uointodr} and the Kodaira-Spencer isomorphism \eqref{ks2} induce inclusions
\begin{align}\label{uor}
\mformsnost\otimes(\Omega_{\moduli/R_0}^{\otimes e})\hookrightarrow (\dr)^{\rmrp}\otimes (\dr^-\otimes\dr^+).
\end{align}
Restricting $D^\rho$ to the image of \eqref{uor}, we get a map

\begin{small}
\begin{align}\nonumber
(D^\rho)^d\rvert_{\mformsnost\otimes(\Omega_{\moduli/R_0}^{\otimes e})}: \mformsnost\otimes(\Omega_{\moduli/R_0}^{\otimes e})\rightarrow \dr(\ci)^\rmrp\otimes (\dr^+\otimes \dr^-)^{\otimes e+d}.
\end{align}
\end{small}

We define the $C^\infty$-differential operator $\ciop$ to be the map
\begin{align}\nonumber
\ciop: \mformsnost\otimes(\Omega(\ci)^{\otimes \bullet})\rightarrow  \mformsnost\otimes(\Omega(\ci))^{\otimes \bullet+d})\end{align}
that is the composition of maps in the following commutative diagram:

\begin{small}
\begin{align}\nonumber
\xymatrix{
\mformsnost(\ci)\otimes(\Omega(\ci)^{\otimes \bullet})\ar@{.>}[rrrdddd]_{\ciop}\ar[rrr]^{(D^\rho)^d}  &  & &\dr(\ci)^\rho\otimes (\dr^+\otimes \dr^-)^{\bullet+d}\ar[dd]^{\mod \splci}\\
& &&\\
 & &&\mformsnost(\ci)\otimes (\uo^+\otimes\uo^-)^{\otimes \bullet+d}\ar[dd]^{\cong}\\
 & &&\\
&& & \mformsnost(\ci)\otimes (\Omega(\ci))^{\bullet+d}
 }
\end{align}
\end{small}

\begin{rmk}\label{adhocrmk}
For the reader who worries that defining the differential operators on $\mformsnost$ (instead of $\EEVr$) is too {\it ad hoc} or non-canonical, we note that the differential operators can equivalently be defined as morphisms from $\EEVr$ to $\EE_{V, (\rho_+\otimes\rhost)\otimes(\rho_-\otimes\rhost)}$.  Indeed, the composition of maps

\begin{footnotesize}
\begin{align}\nonumber
\xymatrix{
\EEVr\ar@{.>}[rrrr]\ar[dd]_{(\lambda, v)\mapsto \lambda(v)}&&&& \EE_{(V_-\otimes \IC^n)\otimes(V_+\otimes\IC^n), (\rho_-\otimes\rhost)\otimes(\rho_+\otimes\rho_+)}\\
&&&&\\
\mformsnost\ar[rrr]_{\ciop}&&&\mformsnost\otimes\Omega\ar[r]_{KS}&\mformsst\ar[uu]_{v\mapsto (\lambda, \lambda^{-1}(v))}
 }
\end{align}
\end{footnotesize}

gives an equivalent expression of our differential operator as an operator from  $\EEVr$ to $\EE_{(V\otimes(R^n)\otimes(R^n)), (\rho_+\otimes\rhost)\otimes(\rho_-\otimes\rhost)}$.
\end{rmk}

Let $Z$ be a $GL_n(\IC)\times GL_n(\IC)$-stable quotient of 
$\mformsnost(\ci)\otimes (\Omega_{\A/\moduli}(\ci))^{\bullet+d},$
 and let $\phi_Z$ be the projection of $\mformsnost(\ci)\otimes (\Omega_{\A/\moduli}(\ci))^{\bullet+d}$ onto $Z$.  We define the differential operator $\ciopo^Z$ by 
$\phi_Z\circ \ciop.$

\subsection{Algebraicity theorem for $\ci$-differential operators}
The following algebraicity theorems (Theorems \ref{cialgthm} and \ref{cialgthm2}) are important for our intended applications.  The statement of Theorem \ref{cialgthm} and the idea of the proof are essentially the same as what is done in Section 2.4 of \cite{kaCM}; the new parts are our generalizations from Katz's special scalar-valued case to the arbitrary (often vector-valued) case and to the case of projection onto subrepresentations.

Throughout this section, fix an $\OK$-algebra $R$ with an inclusion
$\ir: R\hookrightarrow\IC.$

In the special case $R=\bar{\IQ}$, the statement of Theorem \ref{cialgthm} is essentially the same as Theorem 14.9 (2) of \cite{shar}.  However, the methods or the proof of Theorem \ref{cialgthm} are different from the proof in \cite{shar}; the proof we present is similar to what is done in Section 2.4 of \cite{kaCM}.  

We associate each automorphic form $f$ in $\agauto_{\rho\otimes\tau^e}(R)$ with its image in $\agauto_{\rho\otimes\tau^e}(\IC)$ via the extension of scalars induced by $\ir$.  We also associate each automorphic form in $\agauto_{\rho\otimes\tau^e}(\IC)$ with the corresponding holomorphic section of $\mformsnost(\ci)\otimes\underline{\Omega}(\ci)^{\otimes e}$ on $\moduli(\ci)$.

Let $x=\underline{X}$ be an $R$-valued point of $\moduli_R$, and let $\lambda$ be an element of $\EE_{X/R}$.  Suppose that over $R$ there is a splitting
$\splxr\oplus\omxr\isomto\drxr.$
Then we have an inclusion
\begin{align}\label{incah}
\Split(X/R)\hookrightarrow H^1(X^{an}_{\IC}, \IC)
\end{align}
coming from the composition of maps
\begin{align}\nonumber
\xymatrix{
\splxr\ar@{^{(}->}[rrrr]^{\mbox{ext. of scalars via }\ir}\ar@{.>}[rrrrrdd]&&&& \splxr\otimes\IC\ar@{^{(}->}[r]^{\eqref{incah}\otimes \id}&\dr(X/R)\otimes\IC\ar[dd]^{\cong}\\
&&&&&\\
&&&&& H^1(X^{an}_{\IC}, \IC)
 }
\end{align}

We say that the the pair $(x, \splxr)$ satisfies the condition \pstrict   if the following holds:

\begin{align}
\mbox{The image of $\splxr$ under the inclusion } &\mbox{\eqref{incah} is the antiholomorphic subspace}\nonumber\\ H^{0,1}&\subset H^1({X^{an}}_{\IC}, \IC),\nonumber\\
 \mbox{ i.e. }\splxr\otimes\IC& = \splci(x_{\IC}).\tag{\strict}
\end{align}

Note that this condition is essentially the same as condition (2.4.2) in \cite{kaCM}.

For our intended applications, the only points that will interest us are certain ordinary CM points.  We shall see later that for each such ordinary CM point, there is indeed a splitting satisfying condition \pstrict.

Note that in general, we only know that the values of $\ciop(f)$ at points $(x, \lambda)$ lie in a $\IC$-vector space.  (This is because even if $f\in \agauto_{\rho}(R)$, we only know that $\ciop(f)$ is a $\ci$-function and nothing about where its values at arbitrary points lie.)  We see in theorem \ref{cialgthm}, however, that we can say much more about the values of $\ciop(f)$ at points satisfying \pstrict.

\begin{thm}\label{cialgthm}
Suppose that $(x, \splxr)$ is an $R$-valued point of $\moduli_R$ that satisfies condition \pstrict.  Let $f$ be an automorphic form in $\agauto_{\rho\otimes\tau^e}(R)$ with values in an $R$-module $V$.  Then 
\begin{align}
(\ciop f)(x, \lambda)_{\IC} = \ir (\partial(\rho, e, x, \lambda, \splxr, d)f).\label{surpeq}
\end{align}
Therefore,
$(\ciop f)(x, \lambda)_{\IC}\in V\otimes_R \left(R^n\otimes R^n\right)^{\otimes d}.$
\end{thm}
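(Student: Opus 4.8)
The plan is to unwind both sides of \eqref{surpeq} through the definitions and to check that the composition of maps defining $\ciop$ agrees, at the point $(x,\lambda)$, with the composition defining $\partial(\rho,e,x,\lambda,\splxr,d)$. The key observation is that the two constructions are built from literally the same ingredients --- the iterated algebraic operator $(D^{\rho})^d$ (which is defined integrally over any $\OK$-scheme, hence commutes with the extension of scalars $\ir\colon R\hookrightarrow\IC$), the Kodaira-Spencer isomorphism \eqref{ks2}, and a projection modulo a complementary submodule of $\dr$. The only difference is which complementary submodule is used: $\ciop$ is defined using the $\ci$-Hodge splitting $\splci=\overline{\uo(\ci)}$ over $\moduli^{\an}$, whereas $\partial(\rho,e,x,\lambda,\splxr,d)$ is defined using the chosen algebraic splitting $\splxr$ at the single point $x$. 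Condition \pstrict\ is precisely the statement that these two submodules coincide after tensoring up to $\IC$: $\splxr\otimes\IC=\splci(x_\IC)$. So the heart of the argument is a compatibility/base-change diagram chase rather than any new computation.

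Concretely, I would proceed as follows. First, recall from Section~\ref{agafapproach} and Remark~\ref{adhocrmk} that an automorphic form $f\in\agauto_{\rho\otimes\tau^e}(R)$ is a global section of $\mformsnost\otimes(\uo^{\otimes e})$ over $\moduli_R$, and that via the inclusion \eqref{roin} (resp.\ \eqref{uor}) we may regard $f$ as a section of $(\dr^-\otimes\dr^+)^{\rho}\otimes(\dr^+\otimes\dr^-)^{\otimes e}$. Since $(D^{\rho})^d$ is an algebraically defined operator (built from $\nabla$ and $KS$, both defined over $R_0$) and its formation commutes with arbitrary extension of scalars of $\OK$-algebras, we get
\begin{align}\nonumber
\bigl((D^{\rho})^d f\bigr)_{\IC} = (D^{\rho})^d\bigl(f_{\IC}\bigr)
\end{align}
as sections over $\moduli_\IC$ (and hence over $\moduli^{\an}$). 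Evaluating at the point $x_\IC$ gives an element of $(\dr^-(X/R)\otimes\dr^+(X/R))^{\rho}\otimes(\dr^+\otimes\dr^-)^{\otimes d+e}\otimes_R\IC$, and this same element occurs in the top-left corner of both diagram~\eqref{acom} and the diagram defining $\ciop$ (evaluated at $x$).

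Second, I would compare the two projection steps. In \eqref{acom} one reduces modulo $\splxr$ (in each $\dr^\pm$-factor) and then applies $\lambda$; in the definition of $\ciop$ one reduces modulo $\splci$ and then applies the Kodaira-Spencer identification $\uo^+\otimes\uo^-\isomto\Omega$ followed, in the reformulation of Remark~\ref{adhocrmk}, by $\lambda^{-1}$. The point is that the projection $\dr(X/R)^{\pm}\to\uo_{X/R}^{\pm}$ used in \eqref{acom} is, by hypothesis \strict, exactly the restriction to $x_\IC$ of the $\ci$-Hodge projection $\dr(\ci)^{\pm}\to\uo(\ci)^{\pm}$ used in $\ciop$ --- because the two kernels agree: $\splxr\otimes\IC=\splci(x_\IC)$. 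One must also check that this projection is compatible with the $\pm$-decomposition, i.e.\ that $\iota(a)^{*}$ preserves the splitting so that $\dr^{\pm}\subseteq\uo^{\pm}\oplus\Split$ in both the algebraic and $\ci$ settings; this is already assumed in the setup (cf.\ \eqref{gmonplus}, \eqref{gmonminus} and the marginal note in Section~\ref{cinf}), so no extra work is needed. Finally, I would verify that the residual Kodaira-Spencer identifications match: the map $\uo^+\otimes\uo^-\xrightarrow{\ \sim\ }\Omega$ composed with $\lambda^{-1}$ in the $\ci$-diagram is, on the fiber at $x$, the same as the isomorphism $(\uo^-\otimes\uo^+)\isomto R^n\otimes R^n$ induced by $\lambda$ in \eqref{acom}, because both are literally induced by the chosen $\lambda\in\EE_{X/R}$ and the fixed $KS$. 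Chaining these identifications gives \eqref{surpeq}, and the stated membership $(\ciop f)(x,\lambda)_\IC\in V\otimes_R(R^n\otimes R^n)^{\otimes d}$ is then immediate since the right-hand side of \eqref{surpeq} is by construction the $\ir$-image of an element of that $R$-module.

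The main obstacle I anticipate is purely bookkeeping: keeping the $\rho_{\pm}$-decorations, the $e$ copies coming from the weight-$\tau^e$ part of $f$, and the $d$ copies introduced by $(D^{\rho})^d$ all straight, and checking that the "mod $\Split$" projections applied at the various tensor slots of $(\dr^+\otimes\dr^-)^{\otimes e+d}$ in \eqref{acom} and in the $\ciop$-diagram really are applied slot-by-slot in the same way. There is no deep obstruction --- the content is entirely contained in condition \pstrict, which forces the analytic and algebraic splittings to agree at $x_\IC$ --- but the diagram is large, so some care is needed to confirm that the two large composites are equal morphism-by-morphism after evaluation at $x$. Once that is done, Theorem~\ref{cialgthm2} and the CM applications should follow by specializing this identity to the ordinary CM points, where a splitting satisfying \pstrict\ is exhibited (the unit-root splitting of Proposition~\ref{FrHodgemx}, which at CM points agrees with the Hodge splitting).
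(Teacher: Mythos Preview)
Your proposal is correct and takes essentially the same approach as the paper: both arguments reduce to showing that the two composites built from $(D^{\rho})^d$ followed by projection modulo a complement agree at $x$, with condition \pstrict\ supplying exactly the identification $\splxr\otimes\IC=\splci(x_\IC)$ needed to make the relevant diagram commute. The paper packages this as the single commutative square \eqref{algcomdiag} (after an explicit reduction to $R=\IC$), whereas you spell out the individual compatibilities slot-by-slot, but the content is identical.
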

The proof we provide is similar to Katz's proof of Theorem 2.4.5 in \cite{kaCM}.
\begin{proof}
As it was defined in Section \ref{agdefmaps}, $\ir (\partial(\rho, e, x, \lambda, \splxr, d)f)$ lies in 
$V\otimes_R \left(R^n\otimes R^n\right)^{\otimes d}.$
So to prove the theorem, it suffices to prove that Equation \eqref{surpeq} holds.  

By the extension of scalars from $R$ to $\IC$ given by $\ir$, we associate the automorphic form $f$ with its image $f_{\IC}$ in $\agauto_{\rho\otimes\tau^e}(\IC)$, the $R$-valued point $x$ with $R$-basis $\lambda$ with $x$ with basis $\lambda_{\IC}$, and $\splxr$ with its image in $\splxr\otimes\IC$.  Then, we see that
$\ir (\partial(\rho, e, x_{\IC}, \lambda_{\IC}, \splxr_{\IC}, d)f_{\IC}) = \ir(\partial(\rho, e, x, \lambda, \splxr, d)f)$
is $V\otimes_R \left(R^n\otimes R^n\right)^{\otimes d}$-valued.  So it suffices to show that Equation \eqref{surpeq} holds in the case $R=\IC$, which we will now do.

Associate $f$ with its image in $(\dr)^\rho\otimes(H^+\otimes H^-)^{\otimes e}$.  Then $(\partial(\rho, e, x, \lambda, \splxr, d)f)(x)$  is obtained by applying $(D^\rho)^d$ to $f$ and composing with the maps along the right side  of the commutative diagram \eqref{algcomdiag}.  (Commutativity in \eqref{algcomdiag} follows from the hypothesis that $(x, \splxr)$ satisfies condition \pstrict.)  Similarly, $(\ciop f)(x)$ is obtained by applying $(D^\rho)^d$ to $f$ and composing with the maps along the left side of the commutative diagram \eqref{algcomdiag}.  So \eqref{surpeq} holds for all $x$ with a splitting satisfying \pstrict.

\begin{footnotesize}
\begin{align}\label{algcomdiag}
\xymatrix{
(\dr)^\rho\otimes(\dr^+\otimes\dr^-)^{\otimes e+d}\ar[ddr]^{\mbox{     take fiber at } x}\ar[dd]_{\mod\splci}&\\
&\\
\mformsnost(\ci)\otimes(\uo^+(\ci)\otimes\uo^-(\ci))^{\otimes e+d}\ar[ddr]_{\mbox{     take fiber at } x}&(\dr(X/R))^\rho\otimes((\dr^+\otimes\dr^-)(X/R))^{\otimes e+d}\ar[dd]^{\mod \splxr}\\
&\\
&((\uo^+\otimes\uo^-)(X/R))^\rho\otimes((\uo^-\otimes\uo^+)(X/R))^{\otimes e+d}\ar[dd]^{\cong \mbox{ from choice of } \lambda}\\
&\\
&(V\otimes_R\IC)\otimes_{\IC}(\IC^n\otimes \IC^n)^{\otimes d}
 }
\end{align}
\end{footnotesize}

\end{proof}

We also obtain the following generalization of Theorem \ref{cialgthm}:
 
\begin{thm}\label{cialgthm2}
Suppose that $(x, \splxr)$ is an $R$-valued point of $\moduli_R$ that satisfies condition \pstrict.  Let $f$ be an automorphic form in $\agauto_{\rho\otimes\tau^e}(R)$.  Let $Z$ be a $GL_n(R)\times GL_n(R)$-stable $R$-quotient of $\uo_R^\rho\otimes (\Omega_{\A/\moduli_R})^{\bullet+d}$, and let $\phi_Z$ be the projection of $\omci^\rho\otimes (\Omega_{\A/\moduli}(\ci))^{\bullet+d}$ onto $Z\otimes_R\IC$.  Let $Z_x$ be the fiber of $Z$ at $x$.  Then 
$(\ciop^Z f)(\x,\lambda)_{\IC} = \ir (\partial(\rho, e, x, \lambda, \splxr, d)^{Z_X}f).$
Therefore,
$(\ciop^Z f)(x,\lambda)_{\IC}$
actually takes values in the $R$-module $\lambda(Z)$.
\end{thm}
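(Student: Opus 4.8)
The plan is to reduce Theorem \ref{cialgthm2} to Theorem \ref{cialgthm} by post-composing the identity \eqref{surpeq} with an appropriate projection, and then to check that this projection is compatible with the extension of scalars $\ir$. First I would recall the definitions: by construction $\ciop^Z = \phi_Z\circ\ciop$, where $\phi_Z$ is the projection of the sheaf $\omci^\rho\otimes(\Omega_{\A/\moduli}(\ci))^{\bullet+d}$ onto $Z\otimes_R\IC$; and $\partial(\rho, e, x, \lambda, \splxr, d)^{Z_X} = \phi_{Z_x}\circ\partial(\rho, e, x, \lambda, \splxr, d)$, where $\phi_{Z_x}$ is the projection onto the fiber $Z_x$ (identified with $\lambda(Z)$ via the chosen basis $\lambda$).

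The key point is a compatibility of projections. Since $Z$ is a $GL_n(R)\times GL_n(R)$-stable $R$-quotient of $\uo_R^\rho\otimes(\Omega_{\A/\moduli_R})^{\bullet+d}$, the projection $\phi_Z$ is obtained, via the extension of scalars $\ir$, from an $\mathcal{O}_{\moduli_R}$-linear projection of sheaves; in particular it commutes with formation of fibers, so that evaluating $\ciop f$ at $x$ and then applying $\phi_{Z_x}$ gives the same result as applying $\phi_Z$ and then taking the fiber at $x$, i.e.
\begin{align*}
(\ciop^Z f)(x,\lambda)_\IC = \phi_{Z_x}\bigl((\ciop f)(x,\lambda)_\IC\bigr).
\end{align*}
Moreover, $\phi_{Z_x}$ is the $\IC$-linearization (via $\ir$) of the corresponding $R$-linear projection onto $Z_x$, which I will denote $\phi_{Z_x}^R$, so that $\phi_{Z_x}\circ\ir = \ir\circ\phi_{Z_x}^R$ on $R$-valued elements.

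Putting these together, Theorem \ref{cialgthm} gives $(\ciop f)(x,\lambda)_\IC = \ir\bigl(\partial(\rho, e, x, \lambda, \splxr, d)f\bigr)$, whence
\begin{align*}
(\ciop^Z f)(x,\lambda)_\IC &= \phi_{Z_x}\bigl(\ir(\partial(\rho, e, x, \lambda, \splxr, d)f)\bigr)\\
&= \ir\bigl(\phi_{Z_x}^R(\partial(\rho, e, x, \lambda, \splxr, d)f)\bigr)\\
&= \ir\bigl(\partial(\rho, e, x, \lambda, \splxr, d)^{Z_X}f\bigr),
\end{align*}
which by construction lies in $\lambda(Z)$; this is exactly the asserted statement. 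The only real work is in the first displayed equality — verifying carefully that the $GL_n\times GL_n$-equivariant sheaf projection $\phi_Z$ commutes with taking the fiber at $x$ — and this is formal once one records that $\phi_Z$ is a morphism of $\OM^{\ci}$-modules arising, via $\ir$, from a morphism of coherent sheaves over $\moduli_R$. Everything else is immediate from Theorem \ref{cialgthm}, so I do not expect any genuine obstacle beyond this bookkeeping.
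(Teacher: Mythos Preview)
Your proposal is correct and follows essentially the same idea as the paper. The paper's proof simply redraws the commutative diagram from Theorem~\ref{cialgthm} with the projections $\phi_Z$ and $\phi_{Z_x}$ inserted (diagram~\eqref{algcomdiag2}), whereas you invoke Theorem~\ref{cialgthm} as a black box and then post-compose with $\phi_{Z_x}$; the content---that the sheaf projection $\phi_Z$ commutes with taking fibers and with $\ir$---is exactly the commutativity of that enlarged diagram.
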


The proof is similar to the proof of Theorem \ref{cialgthm}, except that we also compose with the projection onto $Z$.  Note that like in Remark \ref{adhocrmk}, restriction to $Z$ yields a canonical map
$\EE_{V, \rho}\rightarrow \EE_Z.$

\subsection{Some properties of the $\ci$-differential operators}
In this section, we give some fundamental properties of the $\ci$-differential operators $\ciop$.

We denote by $(\ciopo)^d$ the composition of $\ciop$ with itself $d$ times.
\begin{thm}\label{impprop}
The differential operators satisfy the following properties.
\begin{enumerate}
\item{For each positive integer $d$, \begin{align}\label{afciopo}
\ciop = (\ciopo)^d
\end{align}
}
\item{
Associating the space of automorphic forms of weight $\rho\otimes \tau^f$ with $\mformsnost(\ci)\otimes\Omega(\ci)^{\otimes f}$ via the natural isomorphism induced by the Kodiara-Spencer isomorphism, we have that $\ciop$ is the same as $\partial(\rho\otimes\tau^f, \ci, d)$, i.e. for all positive integers $f\leq e$,
\begin{align}
\ciop&\rvert_{\mformsnost(\ci)\otimes\Omega_{\A/\moduli}(\ci)^{\otimes e}}\nonumber\\
&= \partial(\rho\otimes\tau^f, \ci, d)\vert_{\mformsnost(\ci)\otimes\Omega_{\A/\moduli}(\ci)^{\otimes e}} \label{rhotorhotci}
\end{align}
}
\item{For all positive integers $d\geq 2$,
\begin{align}
\partial(\rho\otimes\tau^{d-1}, \ci, 1)\circ\cdots\circ\partial(\rho\otimes\tau, \ci, 1)\circ\ciopo = \ciop\label{likeshci}
\end{align}
}
\end{enumerate}
\end{thm}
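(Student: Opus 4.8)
The plan is to prove the three assertions of Theorem \ref{impprop} essentially by unwinding the definitions, reducing everything to the formal properties of the single-step operator $\partial(\rho, e, x, \lambda, \Split(X/R), 1)$ and, ultimately, to the associativity of iterated application of the algebraic operator $D^{\rho}$ built from the Gauss-Manin connection together with the Kodaira-Spencer map. The key conceptual point throughout is that each $\ciop$ is, by construction, $(D^{\rho})^d$ followed by a single projection "mod $\Split(\ci)$" and the Kodaira-Spencer identification; so the content of all three statements is that these projections and identifications interact correctly with composition.

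For part (1), equation \eqref{afciopo}, I would argue as follows. By definition $\ciop$ is the composite: apply $(D^{\rho})^d$, reduce mod $\splci$, and apply the isomorphism $(\uo^+\otimes\uo^-)^{\otimes\bullet+d}\isomto\Omega(\ci)^{\otimes\bullet+d}$ coming from Kodaira-Spencer. On the other hand $(\ciopo)^d$ applies, $d$ times, the single step "$D^{\rho}$, mod $\splci$, Kodaira-Spencer." The only subtlety is that between consecutive single steps one re-includes the holomorphic forms $\uo^{\pm}\hookrightarrow\dr^{\pm}$ via \eqref{uointodr} before applying $D^{\rho}$ again. So the claim amounts to: for a holomorphic section $s$ of the appropriate sheaf, $D^{\rho}(s)$ reduced mod $\splci$ equals the corresponding piece one gets by first reducing and then re-lifting. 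Since $D^{\rho}$ is built from $\nabla$, and $\nabla$ preserves the splitting $\dr(\ci) = \uo(\ci)\oplus\splci$ in the sense that $\nabla(D)(\splci)\subseteq\splci$ (noted explicitly at the start of Section \ref{cinf}), the antiholomorphic components produced at one stage do not contribute to the holomorphic part at the next stage; hence the iterated single-step composite agrees with the one-shot definition. I would phrase this as an induction on $d$: the base case $d=1$ is the definition, and the inductive step is exactly the statement that $\nabla$ respects the Hodge splitting together with the compatibility \eqref{gmonplus}--\eqref{gmonminus} that $\nabla$ preserves the $\pm$-decomposition.

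For part (2), equation \eqref{rhotorhotci}, the point is that an automorphic form $f$ of weight $\rho\otimes\tau^f$ is, via the Kodaira-Spencer isomorphism \eqref{ks2}, a section of $\mformsnost(\ci)\otimes\Omega(\ci)^{\otimes f}$, and both $\ciop$ and $\partial(\rho\otimes\tau^f,\ci,d)$ are defined by applying $(D^{\rho})^d$ and then performing the same projection/identification. I would make this precise by comparing the defining commutative diagrams: the diagram defining $\ciop$ in Section \ref{cioperatordefinition} and the diagram \eqref{acom} defining $\partial(\rho,e,x,\lambda,\Split(X/R),d)$ (specialized with $\Split = \splci$ and $x$ ranging over all points, i.e. in the sheaf-theoretic rather than pointwise form) are literally the same diagram once one unwinds the identification of the weight-$\rho\otimes\tau^f$ sheaf with $\mformsnost\otimes\Omega^{\otimes f}$. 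So this is really a bookkeeping lemma; the restriction $f\le e$ just ensures the $\Omega^{\otimes f}$ factor sits inside the $(\dr^+\otimes\dr^-)^{\otimes e}$ factor as in \eqref{uor}. Part (3), equation \eqref{likeshci}, then follows by combining (1) and (2): by (1), $\ciop = (\ciopo)^d$; by (2), applied at each stage with the weight incremented by one copy of $\tau$ (since $\ciopo$ raises the weight from $\rho\otimes\tau^k$ to $\rho\otimes\tau^{k+1}$), each single factor $\ciopo$ applied to a form of weight $\rho\otimes\tau^k$ equals $\partial(\rho\otimes\tau^k,\ci,1)$; chaining these from $k=0$ up to $k=d-1$ gives exactly the left-hand side of \eqref{likeshci}.

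The main obstacle, and the step I would be most careful about, is the compatibility claim underlying part (1): that reducing mod $\splci$ after each application of $D^{\rho}$ loses no information relevant to subsequent applications — equivalently, that the "extra" terms $\nabla$ might produce in $\splci$ never feed back into the holomorphic part. This rests on $\nabla(D)(\splci)\subseteq\splci$, which is the integrability/horizontality of the Hodge (unit-root-type) splitting in the $\ci$-setting; it is asserted in the text but should be invoked explicitly, and one must also check that the intermediate inclusion $\uo^{\pm}\hookrightarrow\dr^{\pm}$ followed by $D^{\rho}$ followed by "mod $\splci$" genuinely equals the identity-composite one wants — i.e. that the Kodaira-Spencer identification $\uo^+\otimes\uo^-\isomto\Omega$ used to re-read the output as a new $\Omega$-factor is the same identification used in the one-shot definition. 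Both are true by construction, but writing the diagram chase cleanly (keeping track of which tensor slots are "weight" slots and which are "$\Omega$" slots) is where the real work lies; everything else is formal.
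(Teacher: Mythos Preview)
Your proposal is correct and follows essentially the same approach as the paper: the paper's proof of (1) rests on exactly the horizontality $\nabla(\splci)\subseteq\splci\otimes\Omega$ you identify, concluding $(D^{\rho})^d\circ(\bmod\ \splci) = (D^{\rho}\circ(\bmod\ \splci))^d$; the paper dispatches (2) as an unwinding of definitions (of $\tau$, of the Kodaira--Spencer map, and of $\ciop$), just as you do; and the paper derives (3) from (1) and (2), exactly as you propose. Your discussion is more explicit about the intermediate re-inclusion and the bookkeeping of tensor slots, but the underlying argument is the same.
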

\begin{proof}
Recall that $\splci$ is horizontal with respect to $\nabla$, i.e. 
$\nabla(\splci)\subseteq \splci\otimes\Omega_{\A/\moduli}(\ci).$  
So from the definition of $D^{\rho}$, we see that
$D^{\rho}(\splci)\subseteq \splci,$
and hence, for all positive integers $d$,
$(D^{\rho})^d(\splci) \subseteq \splci.$  So
$(D^{\rho})^d\circ(\mbox{``$\mod\splci$"}) = (D^{\rho}\circ(\mbox{``$\mod\splci$"}))^d,$
where $(D^{\rho}\circ(\mbox{``$\mod\splci$"}))^d$ denotes $D^{\rho}\circ(\mbox{``$\mod\splci$"})$ composed with itself $d$ times.
Therefore, it follows directly from the definition of $\ciop$ that
$\ciop = (\ciopo)^d.$
So \eqref{afciopo} holds.

Equation \eqref{rhotorhotci} follows directly from the definition of $\tau$, our earlier explicit description of the Kodaira-Spencer isomorphism, and the definition of the map $\ciop$ for any representation $\rho$.

Now, $\ciopo$ has image in $\omci^{\rho}\otimes\sum_{e = 1}^{\infty}\Omega^{\otimes e}(\ci)$.  So \eqref{likeshci} is a corollary of \eqref{afciopo} and \eqref{rhotorhotci}.
\end{proof}

\subsection{Comparison with Shimura's $\ci$-differential operators}\label{rho04301}
\begin{prop}\label{rho0430}  Let $D_{\rho}$ be Shimura's differential operator discussed in the introduction (cf. section 12.1 of \cite{shar}).  
Let $f: \hn\rightarrow V = V_-\otimes V_+$ be a $\ci$-function.  Let $\lambda\in\EE$.  Then
\begin{align}\label{frho}
\ciopo\left(\lambda(f)\right)= \lambda(D_{\rho} f).
\end{align}
\end{prop}

(In Equation \eqref{frho}, $\lambda$ refers to the induced map from $V_{\pm}$ to $\uo_{\pm}^{\rho_{\pm}}$.)

\begin{proof}
Define
$v^{\pm}_{\lambda_{\pm}} = \lambda^{-1}(du^{\pm}_{\lambda_{\pm}})$
for each tuple $\lambda_{\pm}$.
Writing $f$ in terms of the basis $v_{\lambda_{\pm}}$ for $V^{\pm}$, we have
\begin{align}\label{sumoverl}
f = \sum_{\lambda_-, \lambda_+} f_{\lambda_-,\lambda_+}\lambda(v^-_{\lambda_-}\otimes v^+_{\lambda_+}),
\end{align}
for some $\ci$ complex valued functions $f_{\lambda_-, \lambda_+}$.  The sum in Equation \eqref{sumoverl} is, as usual, over all tuples $\lambda_{\pm}$ so that $v_{\lambda_{\pm}}$ is in $V^{\pm}$.  Note that
\begin{align}
\ciopo(\lambda(f(z))) &= \ciopo\left(\sum_{\lambda_-,\lambda_+}f_{\lambda_-, \lambda_+}\lambda(v^-_{\lambda_-}\otimes v^+_{\lambda_+})\right)\label{regbar1}\\
&= \ciopo\left(\sum_{\lambda_-,\lambda_+}f_{\lambda_-, \lambda_+}(du^--d\bar{u}^-)_{\lambda_-}\otimes (du^+-d\bar{u}^+)_{\lambda_+}\right).\label{regbar2}
\end{align}
We get from \eqref{regbar1} to \eqref{regbar2} by recalling that $\ciopo(\bar{\uo})=0$, since $\bar{\uo}$ is holomorphically horizontal with respect to $\nabla$.

Applying Equations \eqref{vecdui1} and \eqref{vecdui2} to Equation \eqref{regbar2}, we see that
\begin{align*}
\ciopo&(\lambda(f(z)))= \ciopo\left( \rho(\Xi(z)) \left(\sum_{\lambda_-,\lambda_+}f_{\lambda_-, \lambda_+}(\beta+\bar{\alpha}\beta)_{\lambda_-}\otimes (\beta+\alpha\beta)_{\lambda_+}\right)\right).
\end{align*}

Now recall that the sections $\beta_i+\bar{\alpha}\beta_i$ and $\beta_i+\alpha\beta_i$ are horizontal for the Gauss-Manin connection.  So their image under $\ciopo$ is zero.

Therefore 
\begin{small}
\begin{align*}
\ciopo(\lambda(f(z))) = D\left( \rho(\Xi(z)) \left(\sum_{\lambda_-,\lambda_+}f_{\lambda_-, \lambda_+}(\beta+\bar{\alpha}\beta)_{\lambda_-}\otimes (\beta+\alpha\beta)_{\lambda_+}\right)\right)\mod\splci,
\end{align*}
\end{small}
where $D$ is the map \eqref{defnD}.  Applying \eqref{vecdui1} and \eqref{vecdui2} again, we obtain
\begin{small}
\begin{align*}
\ciopo&(\lambda(f(z)))\\
&=\rho^{-1}(\Xi(z)) D\left( \rho\left(\Xi(z)\right) \left(\sum_{\lambda_-,\lambda_+}f_{\lambda_-, \lambda_+}(du^--d\bar{u}^-)_{\lambda_-}\otimes (du^+-d\bar{u}^+)_{\lambda_+}\right)\right)\mod\splci\\
&= \rho^{-1}(\Xi(z)) D\left( \rho\left(\Xi(z)\right) \left(\sum_{\lambda_-,\lambda_+}f_{\lambda_-, \lambda_+}(du^-)_{\lambda_-}\otimes (du^+)_{\lambda_+}\right)\right)\\
& = \rho^{-1}(\Xi(z)) D\left( \rho\left(\Xi(z)\right) \lambda(f)\right).
\end{align*}
\end{small}\end{proof}

Let $Z$ be a $GL_n(\IC)\times GL_n(\IC)$-stable quotient of 
$\omci^\rho\otimes (\Omega_{\A/\moduli}(\ci))^{\bullet+d},$
 and let
$\mathcal{Z} = H^0(\hn, Z).$
Let $\phi_{\mathcal{Z}}$ denote projection onto $\mathcal{Z}$.  Recall that Shimura defines a differential operator $D_{\rho}^\mathcal{Z}$ (see e.g. \cite{shar}) by
$D_{\rho}^\mathcal{Z} = \phi_{\mathcal{Z}}D^d.$
So as a corollary of Proposition \ref{rho0430}, we obtain the following
\begin{cor}
$D_{\rho}^\mathcal{Z} = \ciopo^Z.$
\end{cor}

\setcounter{equation}{0}

\section{The $p$-adic differential operators}
\label{kapa}
In this section, we construct $p$-adic differential operators that act on $p$-adic automorphic forms, and we discuss basic properties of these operators.  We follow the arguments from \cite{kaCM} closely.  Rather than the notation from \cite{kaCM}, however, we use the notation from \cite{SHL} and \cite{hida}, since this is what we will use in our applications.

Throughout this section, let $R$ be an $\OK$-algebra that is separated for the $p$-adic topology, and let $R_0$ be the $p$-adic completion of $R$.

\subsection{Analogue of the differential operators $\ciop$}\label{padicoperatordefinition}

We define $p$-adic differential operators
\begin{align*}
\paop: \mformspa\rightarrow\mformspa\otimes \Omega(\padic)
\end{align*}
in the same way as the $\ci$-differential operators $\ciop$, except that we replace $\splci$ and $\Omega(\ci)$ with $\splpa$ and $\Omega(\padic)$, respectively.  Replacing all occurrences of $\ci$ with $\padic$ in the proof of Theorem \ref{impprop}, we see that all the properties of the $\ci$-operators given in Theorem \ref{impprop} also hold for the $p$-adic operators.
One can construct operators $\paop^Z$ similarly to how one constructs the operators $\ciopo^Z$.  The operators $\paop^Z$ have properties similar to those of $\ciopo^Z$.

\subsection{$p$-adic arithmeticity result}

In this section, following \cite{kaCM}, we give a $p$-adic analogue of the algebraicity theorem \ref{cialgthm}.  

Let $x=\underline{X}$ be an $R$-valued point of $\moduli_R$ with an element $\lambda$ of $\EE_{x/R}$.  Suppose there is a splitting over $R$
\begin{align}\nonumber
\splxr\oplus\omxr\isomto\drxr.
\end{align}
Then, similarly to the $\ci$ case, we have an inclusion
\begin{align}\label{incah2}
\splxr\hookrightarrow \dr(\padic)(X/R_0)
\end{align}

We introduce a $p$-adic analogue of the condition \pstrict.  We say that the the pair $(x, \splxr)$ satisfies the condition \pstrictp if the following holds:
\begin{align}
\mbox{The image of $\splxr$ under the inclusion } &\mbox{\eqref{incah2} is the unit root subspace}\nonumber\\ \uroot(X/R_0)&\subset \dr(\padic)(X/R_0),\nonumber\\
 \mbox{ i.e. }\splxr\otimes R_0& = \uroot (X/R_0).\tag{\ddag}
\end{align}

Let $f$ be an automorphic form of weight $\rho$ over $R$, and associate it with a corresponding element of $\uo^{\rho}_R$.  Then by the extension of scalars $R\hookrightarrow R_0$, we can view $f$ as a section $f(\padic)$ of $\ompa^{\rho}$. 

We now give a $p$-adic analogue of the algebraicity theorem \ref{cialgthm}.

\begin{thm}\label{paalgthm}
Suppose that $(x, \splxr)$ is an $R$-valued point of $\moduli_R$ that satisfies condition \pstrictp, and let $\lambda$ be an element of $\EE_{x/R}$.  Let $f$ be an automorphic form in $\agauto_{\rho\otimes\tau^e}(R)$ with values in the $R$-module $V$.  Then 
$(\paop f(\padic))(x,\lambda)_{R_0} = \ir (\partial(\rho, e, x, \lambda, \splxr, d)f).$
Therefore,
$(\paop f(\padic))(x,\lambda)_{R_0}$
lies in the $R$-module $V\otimes\left(R^n\otimes R^n\right)^{\otimes d}$.
\end{thm}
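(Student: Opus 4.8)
The plan is to mirror, essentially verbatim, the proof of the complex-analytic algebraicity theorem \ref{cialgthm}, replacing the Hodge splitting $\drci = \omci \oplus \splci$ by the unit root splitting $\dr = \uo \oplus \uroot$ of Proposition \ref{FrHodgemx}, and the antiholomorphic condition \pstrict\ by its $p$-adic counterpart \pstrictp. The whole point is that the operator $\paop$ was \emph{defined} so as to make this substitution legitimate: in Section \ref{padicoperatordefinition} it is built from the algebraic operator $(D^\rho)^d$ followed by reduction modulo $\splpa = \uroot$ and the Kodaira--Spencer identification, exactly as $\ciop$ is built from $(D^\rho)^d$ modulo $\splci$. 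Likewise the auxiliary map $\partial(\rho, e, x, \lambda, \splxr, d)$ of Section \ref{agdefmaps} is purely algebraic and makes sense for \emph{any} choice of splitting $\Split(X/R) \subset \dr(X/R)$ of $\dr$ satisfying the hypotheses there; here we feed it the splitting coming from the given $(x, \splxr)$.

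First I would reduce to the level of the fiber at $x$. Associate $f$ with its image in $(\dr^-\otimes\dr^+)^{\rho}\otimes(\dr^+\otimes\dr^-)^{\otimes e}$ via the inclusion \eqref{roin}, and apply $(D^\rho)^d$ to obtain a global section of $(\dr^-\otimes\dr^+)^{\rho}\otimes(\dr^+\otimes\dr^-)^{\otimes e+d}$ over $\moduli(\padic)$. Both sides of the claimed identity are obtained by evaluating this section at the point $x$ (equivalently, pulling back along $\Spec R_0 \to \moduli(\padic)$) and then composing with either: (i) ``$\mod \splpa$'' followed by the Kodaira--Spencer isomorphism and the trivialization $\lambda$ — this computes $(\paop f(\padic))(x,\lambda)_{R_0}$; or (ii) first taking the fiber at $x$, then reducing modulo $\Split(X/R)\otimes R_0$ and applying the $\lambda$-induced isomorphism — this computes $\partial(\rho, e, x, \lambda, \splxr, d)f$ (after extension of scalars $R \hookrightarrow R_0$). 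The key structural fact, exactly analogous to Diagram \eqref{algcomdiag}, is that these two composites agree provided the global splitting $\splpa = \uroot$ restricts at $x$ to the chosen splitting $\splxr \otimes R_0$; and that restriction statement is \emph{precisely} condition \pstrictp. So I would write down the $p$-adic analogue of Diagram \eqref{algcomdiag} (replace $\splci$ by $\uroot$, $(\cdot)(\ci)$ by $(\cdot)(\padic)$, $\IC$ by $R_0$ throughout), observe that its commutativity is equivalent to \pstrictp, and conclude. One subtlety worth checking explicitly: the algebraic operator $(D^\rho)^d$ must carry the unit root submodule into itself in each tensor factor, so that ``$(D^\rho)^d \circ \mod\uroot = (D^\rho \circ \mod\uroot)^d$'' — but this is immediate from the horizontality clause $\nabla(\uroot) \subseteq \uroot \otimes \Omega$ in Proposition \ref{FrHodgemx}, exactly as horizontality of $\splci$ was used in the proof of Theorem \ref{impprop}.

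The main obstacle — really the only content beyond bookkeeping — is the $p$-adic integrality/finiteness conclusion: we must know that the output, which a priori lives in a module over $R_0$, in fact lies in $V\otimes_R (R^n\otimes R^n)^{\otimes d}$ over $R$. This is handled not by the diagram chase but by the construction of $\partial(\rho, e, x, \lambda, \splxr, d)$ itself: that map was \emph{defined} (Section \ref{agdefmaps}) to land in $V^-\otimes V^+\otimes(R^n\otimes R^n)^{\otimes e+d}$, an $R$-module, because it is assembled from the $R$-linear operator $D^\rho$, the $R$-rational Kodaira--Spencer isomorphism \eqref{ks2} (valid since $S\to\moduli$, here the identity, is étale), the $R$-rational splitting $\splxr \oplus \omxr \isomto \drxr$, and the $R$-rational trivialization $\lambda \in \EE_{X/R}$. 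So once the identity $(\paop f(\padic))(x,\lambda)_{R_0} = \partial(\rho, e, x, \lambda, \splxr, d)f$ (viewed in $V\otimes_R R_0 \otimes (\cdots)$ via $R\hookrightarrow R_0$) is established by the diagram chase, the integrality is automatic. I would close by noting, as in Theorem \ref{cialgthm}, that the genuinely remarkable feature is that the left-hand side — built from the \emph{$p$-adic} operator and the unit root splitting — is forced to equal a purely algebraic, hence $R$-rational, quantity; this is the input that makes $p$-adic interpolation of special values possible, and (as the introduction emphasizes) it is also the mechanism behind the comparison of values of the $p$-adic operator with values of Shimura's $\ci$-operator at CM points, since both equal the same $\partial(\rho, e, x, \lambda, \cdot, d)f$ when the relevant splittings are compatible.
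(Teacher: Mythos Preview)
Your proposal is correct and takes essentially the same approach as the paper: the paper's entire proof reads ``The proof of Theorem \ref{paalgthm} is similar to the proof of Theorem \ref{cialgthm}: it is obtained by replacing `$\ci$' with `$\padic$','' and you have carried out precisely that substitution, writing down the $p$-adic analogue of Diagram \eqref{algcomdiag} and noting its commutativity is exactly condition \pstrictp. Your added remark on horizontality of $\uroot$ (Proposition \ref{FrHodgemx}) is correct but strictly speaking belongs to the $p$-adic analogue of Theorem \ref{impprop} rather than to this theorem; for \ref{paalgthm} itself the diagram chase suffices.
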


The proof of Theorem \ref{paalgthm} is similar to the proof of Theorem \ref{cialgthm}; simply replace ``$\ci$" with ``$\padic$."  We obtain a similar arithmeticity result for the operators $\paop^Z$.  We note that the only points that matter in our intended applications are certain ordinary CM points.  We shall see soon that for each such ordinary CM point there is a splitting that satisfies condition \pstrictp.

\subsection{Differential operators on $p$-adic automorphic forms}\label{padicformulae}

In this section, we construct a morphism $\theta$ that acts on the space of $p$-adic automorphic forms.  The operator $\theta$ is a vector-valued analogue of Ramanujan's operator $q\frac{d}{dq}$.  

Recall that $p$-adic automorphic forms are actually certain sections of modules of the form $\Oigg\otimes_R V$ with $(\rho, V)$ a representation of $GL_n\times GL_n$.  Thus, all $p$-adic automorphic forms appear as certain sections of the tensor product of total tensor algebras
$T(\Oigg)\otimes T(\Oigg),$
which we associate with the tensor product of the free algebras on $n$ letters
$\noncomalg = \Oig\otimes_R\freealgrt\otimes_R\freealgrt,$
via
\begin{align}\label{eitoti}
e_i\otimes e_j\mapsto T_i\otimes T_j,
\end{align}
where $e_i$ denotes the $i$-th standard basis element of $\Oigg$.
This viewpoint allows us to consider $p$-adic automorphic forms as subsections of the algebra $\noncomalg$.  In the $p$-adic modular forms case, this algebra is simply the ring of $p$-adic modular forms.

We use this viewpoint in this section, not only because it conveniently allows us to consider modular forms of all different weights at once, but also because this viewpoint is important for applications involving construction of families of $p$-adic automorphic forms of different weights.

While not in general a derivation of $\noncomalg$ over $R$, the morphism $\theta$ that we construct later in this section extends to an $R$-derivation of the commutative subalgebra
$\Oig\otimes_R R[T_1, \ldots, T_n]\otimes R[T_1, \ldots T_n].$

Note that composition of the canonical isomorphism
$\omcan: \Oigg\otimes \Oigg\isomto \uo^-(\padic)\otimes \uo^+(\padic)$
with \eqref{eitoti} induces an isomorphism 
\begin{align}
\noncomalg \isomto T\left( \uo^-\left(\padic\right)\right)\otimes T\left(\uo^+\left(\padic\right)\right),
\end{align}
which we shall also denote by $\omcan$.

We are now in a position to define the map $\theta$ and state some of its fundamental properties.

\begin{thm}\label{pdivder}
There exists a morphism $\theta$ of $\noncomalg$ such that the following hold:
\begin{enumerate}
\item{The following diagram commutes:
\begin{small}
\begin{align*}
\xymatrix{
\noncomalg\ar@{.>}[rrrr]^{\theta}\ar[dd]_{\cong\omcan}&&&& \noncomalg\ar[dd]^{\cong\omcan}\\
&&&&\\
T(\ompa^-)\otimes T(\ompa^+)\ar[rrrr]^{\paopo}&&&&T(\ompa^-)\otimes T(\ompa^+)}
\end{align*}
\end{small}}

\item{The morphism $\theta$ is ``homogeneous of degree $1\otimes 1$", i.e. if $x_1$ and $x_2$ are homogeneous elements of $\Oig\otimes_R \freealgrt$ of degrees $d_1$ and $d_2$, then $\theta(x_1\otimes x_2)$ is homogeneous of degree $(d_1+1)\otimes (d_2+1)$.}

\item{On the commutative subalgebra $\Oig[T_1, \ldots, T_n]\otimes_{\Oig} \Oig[T_1, \ldots, T_n]$ of $\noncomalg$, $\theta$ is a derivation.}

\item{If $f(q) =\sum_{h = \left(h_{ij}\right)\in H\dual_{\geq 0}} \lp c(h) q^h\rp$ at the cusp at infinity $(L, H),$ with $c(h)\in \freealgrt\otimes\freealgrt$,
then
\begin{align}\label{qexpaction}
(\theta(f))(q) = \sum_{h = \left(h_{ij}\right)\in H\dual_{\geq 0}} \lp d(h)q^h\rp,
\end{align}
where
$d(h) = \sum_{i,j}h_{ij}c(h)\cdot(T_j\otimes T_i).$}
\end{enumerate}
\end{thm}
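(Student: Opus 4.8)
The plan is to define $\theta$ to be the morphism that makes the diagram in part (1) commute — that is, $\theta := \omcan^{-1}\circ\paopo\circ\omcan$ — and then to verify properties (2), (3), and (4) by unwinding this definition. First I would check that $\theta$ is well-defined as a morphism of $\noncomalg$: since $\omcan$ is the canonical isomorphism $\noncomalg\isomto T(\ompa^-)\otimes T(\ompa^+)$ coming from the $p$-adic trivialization of $\uo^{\pm}$ on $\moduli(\padic)$, and since $\paopo$ was constructed in Section \ref{padicoperatordefinition} as a map $\mformspa\rightarrow\mformspa\otimes\Omega(\padic)$ which (via the Kodaira-Spencer isomorphism \eqref{ks2}) lands in $T^{\bullet+1}(\uo^-(\padic)\otimes\uo^+(\padic))$, conjugating by $\omcan$ indeed produces a morphism of $\noncomalg$. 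Property (1) then holds by construction.

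Next I would establish property (2). The $p$-adic operator $\paopo$ is built from $D^\rho$ followed by the projection mod $\splpa$ and the Kodaira-Spencer identification; by its very construction (the analogue of Lemma \ref{thexpl}, and the diagram \eqref{ddefdiag}), each application of $D^\rho$ increases the tensor degree in $H^+$ by one and the tensor degree in $H^-$ by one — the image lands in $\dr^\rho\otimes T^{\bullet+1}(H^+\otimes H^-)$. After projecting mod $\splpa$ onto $\uo^\pm$ and using Kodaira-Spencer to turn $\uo^+\otimes\uo^-$ into $\Omega$, and after transporting through $\omcan$ (which matches $\uo^+$ with the first $\freealgrt$ factor and $\uo^-$ with the second, via \eqref{eitoti}), the degree in each of the two polynomial-algebra factors goes up by exactly one. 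So $\theta(x_1\otimes x_2)$ has bidegree $(d_1+1, d_2+1)$, which is property (2). Property (3) — that $\theta$ restricts to a derivation on $\Oig[T_1,\dots,T_n]\otimes\Oig[T_1,\dots,T_n]$ — follows because $\paopo$ is built from $\nabla$, which satisfies the Leibniz rule \eqref{prodrule}, and the projection mod $\splpa$ and the Kodaira-Spencer map are $\OM(\padic)$-linear; restricting the tensor algebra to its symmetric subalgebra (which $\omcan$ carries to the commutative polynomial subalgebra) kills the non-commutative failure of Leibniz, exactly as in the $\ci$ case. I would invoke the fact, analogous to \eqref{syminc0501}, that the symmetric subalgebra is preserved.

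The main obstacle — and the part deserving the most care — is property (4), the explicit action on $q$-expansions. Here I would pass to the Mumford object $\tateav$ over $\qexpringp$ (after extending scalars as at the end of Section \ref{mumobject}), where everything is explicit. The key input is the computation recalled in Section \ref{qexpnsection}: $\nabla(D(\gamma))(\omega(w)) \equiv l(w\cdot\gamma)\mod\uo$ for $\gamma\in H$, together with the unit root splitting statement from Lemma \ref{unitsplit} which says precisely that $\nabla(D(\gamma))(\omega(w))$ lies in $\uroot$, i.e. that the splitting at $\tateav$ is the unit root splitting — so that ``mod $\splpa$'' on $\tateav$ is genuinely the projection killing $\uroot$, leaving the $\uo$-component. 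Tracing a monomial $c(h)q^h$: applying $\paopo$ means applying $\nabla$, which by the derivation property produces a term $c(h)q^h\otimes \nabla(\text{nothing}) = $ (zero, since $c(h)$ is horizontal being built from the canonical frame) plus the term coming from differentiating $q^h$. By the definition of $D(\gamma)$ via $D(\gamma)(\sum a_\alpha q^\alpha) = \sum\tr(\alpha\gamma)a_\alpha q^\alpha$ and the Kodaira-Spencer dictionary $\omega^+(e_i)\otimes\omega^-(e_j) = KS(D(e_{ij}))$, the coefficient $h = (h_{ij})$ pairs against $e_{ij}$ to give $h_{ij}$, and the new tensor slot is filled by $\omega^+(e_i)\otimes\omega^-(e_j)$, which under $\omcan^{-1}$ becomes $T_j\otimes T_i$ in the notation of \eqref{eitoti}. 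Summing over $i,j$ yields $d(h) = \sum_{i,j}h_{ij}c(h)\cdot(T_j\otimes T_i)$. The care needed is bookkeeping: getting the $+/-$ labels and the order of the factors $T_j\otimes T_i$ versus $T_i\otimes T_j$ right, matching the convention in \eqref{eitoti} and in the Kodaira-Spencer formula $\omega^+(e_i)\otimes\omega^-(e_j)\mapsto (D(e_{ij}))\dual$ from Section \ref{mumobject}; and confirming that no spurious $\uroot$-contribution survives the projection, which is exactly what Lemma \ref{unitsplit} guarantees. Finally, uniqueness of $\theta$ with property (1) is immediate since $\omcan$ is an isomorphism, so $\theta$ is forced.
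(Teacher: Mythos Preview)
Your proposal is correct and follows essentially the same approach as the paper: define $\theta = \omcan^{-1}\circ\paopo\circ\omcan$, read off (2) and (3) from the degree-raising property of $D^\rho$ and the Leibniz rule for $\nabla$, and deduce (4) by working over $\tateav$, using Lemma~\ref{unitsplit} to see that $\nabla(D(\gamma))(\omega^{\pm}(w))\in\uroot$ so that the canonical frame is annihilated by $\paopo$, and then computing $D(e_{kl})(q^h)=h_{lk}q^h$ together with the Kodaira--Spencer dictionary $\omega^+(e_i)\otimes\omega^-(e_j)=KS(D(e_{ij}))$. The paper's proof is organized in exactly this order, with the same key inputs.
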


\begin{proof}
Define the morphism $\theta$ by
$\theta = \omcan^{-1}\circ \paopo\circ \omcan.$
Recall that for any positive integers $d$ and $e$, $\paopo$ maps each element of $T^d(\ompa^-)\otimes T^e(\ompa^+)$ to an element of $T^{d+1}(\ompa^-)\otimes T^{e+1}(\ompa^+)$.  So $\theta$ maps homogenous elements of degree $d\otimes e$ in $\noncomalg$ to homogeneous elements of degree $(d+1)\otimes (e+1)$ in $\noncomalg$.  It follows from the definition of $\paopo$ that $\theta$ is a $R$-derivation of the commutative subalgebra 
$\Oig\otimes_R R[T_1, \ldots, T_n]\otimes_R R[T_1, \ldots T_n]$
of $\noncomalg$.

Now, we will examine the action of $\theta$ over the Mumford object $\tateav$.  By Lemma \ref{keylemma},
the elements $\nabla(D(\gamma))(\omega(w))$ lie in $\uroot\subseteq \dr$ for each $\gamma\in H$ and $w\in W$.  Since $\nabla(D(\gamma))$ is an $R$-derivation and $\uroot$ an $R$-module, we in fact have that
\begin{align}
\nabla\left(D\left(\gamma\right)\right)\left(\omega^{\pm}\left(w\right)\right)\in\uroot
\end{align} for each $w\in W$.  

Note that
$\paopo \left(\omega^{\pm}\lp e_{i_1}\rp\otimes\cdots\otimes\omega^{\pm}\lp e_{i_r}\rp\right) = 0$
for each positive integer $r$ and $1\leq i_1, \ldots, i_r\leq n$.  Note that
$\paopo\left(f \cdot \omega^{\pm}\lp e_{i_1}\rp\otimes\cdots\otimes\omega^{\pm}\lp e_{i_r}\rp\right) = \omega^{\pm} \lp e_{i_1}\rp\otimes\cdots\otimes\omega^{\pm}\lp e_{i_r}\rp\cdot Df,$
for each section $f$ of $\Oig$.
If the value of $f$ at $\tateav$ is
$f(q) = f(\tateav) = \sum_{h = \left(h_{ij}\right)\in H\dual_{\geq 0}}a(h)q^h,$
then for the standard basis elements $e_{kl}\in H$,
\begin{align*}
\left(D\left(e_{kl}\right)\right)\left(f\left(q\right)\right)=\sum_{h = \left(h_{ij}\right)\in H\dual_{\geq 0}}a(h)\cdot \tr(e_{kl}h)q^h =  \sum_{h = \left(h_{ij}\right)\in H\dual_{\geq 0}}a(h)h_{lk}q^h.
\end{align*}
So over the Mumford object, we have
\begin{small}
\begin{align*}
\nabla\left(D\left(e_{kl}\right)\right)&\left(f(q)\cdot\left(\omega^{\pm}\lp e_{i_1}\rp\otimes\cdots\otimes\omega^{\pm}\lp e_{i_r}\rp\right)\right)\\
& = D\left(e_{kl}\right)(f(q))\cdot \left(\omega^{\pm}\lp e_{i_1}\rp\otimes\cdots\otimes\omega^{\pm}\lp e_{i_r}\rp\right) \cdot KS(D(e_{kl})\mod\splpa\\
& = \sum_{h = \left(h_{ij}\right)\in H\dual_{\geq 0}}a(h)h_{lk}q^h\cdot \omega^+(e_k)\otimes\omega^-( e_l)\mod\splpa.
\end{align*}
\end{small}
Therefore,
$\nabla f(q) =  \sum_{h = \left(h_{ij}\right)\in H\dual_{\geq 0}}\sum_{1\leq l, k, \leq n}a(h)h_{lk}q^h\cdot \omega^{\pm}(e_k)\otimes \omega^{\pm}(e_l)\mod\splpa,$
so
\begin{align*}
\paopo f (q) =  \sum_{h = \left(h_{ij}\right)\in H\dual_{\geq 0}}\sum_{1\leq l, k, \leq n}a(h)h_{lk}q^h\cdot \omega^{\pm}(e_k)\otimes \omega^{\pm}(e_l).
\end{align*}
Thus, $\theta$ acts on $q$-expansions of automorphic forms as in Equation \eqref{qexpaction}.
\end{proof}

\begin{rmk}
The operators $\theta$ are a vector-valued generalization of Ramanujan's operator $\theta = q \frac{d}{dq}$ and Katz's analogous operator for Hilbert modular forms in \cite{kaCM}.
\end{rmk}

\begin{defi}
For each subrepresentation $Z$, define 
$\theta^Z: (\Oig)^{\rho}\rightarrow(\Oig)^Z$
by 
$\theta^Z: = \phi_Z\circ\theta^d\vert_{(\Oig)^{\rho}}.$
\end{defi}
From the definition of $\theta$, we see that
$\theta^Z = \omcan^{-1}\circ\paop^Z\circ \omcan.$

We note that, in practice, the above discussion of $\theta$ can often be simplified according to the properties of the particular representation with which one works.  For example, in our intended applications, we will only be interested in representations of the form $\rho_-\otimes \rho_+$ with $\rho_{\pm} = \det^{k_{\pm}}\otimes Sym^{l_{\pm}}$.  In this case, we will be able to restrict our discussion to the commutative subalgebra 
$\Oig\otimes_R R[T_1, \ldots, T_n]\otimes_R R[T_1, \ldots, T_n]$
of $\noncomalg$, on which $\theta$ will be a derivation.

Now we compare the values of $\theta$ to the values of $\paop$.
\begin{lem}\label{lereln}
Let $\rvaluedpt$ be an $R$-valued point of $\moduli(\padic)$ that satisfies condition \pstrictp, and let $\omega+$ and $\omega^-$ be elements of $\EAR^+$ and $\EAR^-$ respectively.  Let $c = (c_{ij})\in (GL_n\times GL_n)(R_0)$ satisfy
\begin{align}\label{ctrans}
\omega^{\pm}  = c^{\pm}\cdot\omega_{can}^{\pm}(\rvaluedpt).
\end{align}
Let $f$ be an automorphic form of weight $(\rho, V)$ over $R$, and let
$\tilde{f} = \omega_{can}^{-1}(f)\in \uo.$
Then
\begin{align}\nonumber
\paop(\tilde{f})(\rvaluedpt, \omega)= ((\rho\otimes\tau^d)(c^{-1}))(\theta^d f)(\rvaluedpt).
\end{align}
\end{lem}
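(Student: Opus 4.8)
\textbf{Proof plan for Lemma \ref{lereln}.}

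The plan is to compare the two sides by unwinding the definitions of $\theta$, $\paop$, and the canonical trivialization $\omega_{can}$, and then invoking the $p$-adic arithmeticity result (Theorem \ref{paalgthm}) to control how the unit root splitting interacts with the operator. First I would recall that, by definition, $\theta^d = \omega_{can}^{-1}\circ\paop\circ\omega_{can}$, so that $(\theta^d f)(\rvaluedpt)$ is literally the value of $\paop(\tilde f)$ at the point $\rvaluedpt$ read off against the canonical basis $\omega_{can}^{\pm}(\rvaluedpt)$. Thus the content of the lemma is the transformation law relating the value of $\paop(\tilde f)$ computed against $\omega^{\pm}$ to the value computed against $\omega_{can}^{\pm}(\rvaluedpt)$, when the two bases differ by $c^{\pm}$ as in \eqref{ctrans}.

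The key step is to observe that $\paop$ is defined intrinsically on $\ompa^{\rho}\otimes\Omega(\padic)^{\otimes\bullet}$ — it does not depend on a choice of basis of $\uo$ — and that both $\omega^{\pm}$ and $\omega_{can}^{\pm}(\rvaluedpt)$ give elements of $\EAR^{\pm}$ that are related by the action of $c^{\pm}\in GL_n(R_0)$ via \eqref{glnact}. Since an automorphic form of weight $\rho$ viewed as a function of pairs $(\uA,\lambda)$ satisfies $F(\uA,\alpha\lambda) = \rho({}^t\alpha)^{-1}F(\uA,\lambda)$, and since $\paop$ raises the weight from $\rho$ to $\rho\otimes\tau^d$ (property \eqref{rhotorhot} together with the construction of $\tau^d$ via the Kodaira-Spencer isomorphism), the value of $\paop(\tilde f)$ against the basis $\omega^{\pm}$ differs from its value against $\omega_{can}^{\pm}(\rvaluedpt)$ precisely by $(\rho\otimes\tau^d)({}^t(c^{-1}))$ — or, after the transpose conventions built into \eqref{glnact} and the definition of $\tau^e$, by $(\rho\otimes\tau^d)(c^{-1})$. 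So I would carry this out by: (i) writing $\tilde f = \omega_{can}(g)$ for the $p$-adic form $g$ and expressing $f$ in terms of $\theta^d g$; (ii) applying property \eqref{rhotorhot} to identify $\paop$ with $\partial(\rho\otimes\tau^f,\padic,d)$, which by Theorem \ref{paalgthm} has values that transform correctly under change of the trivializing basis (this is where condition \pstrictp{} is used, to guarantee the splitting entering $\partial$ is the unit root splitting); (iii) tracking the $c^{\pm}$-dependence through the isomorphism $\mformspa\cong\EEVr$ of Remark \ref{adhocrmk}, which converts ``value against a basis'' into ``evaluation of a section of a contracted product.''

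The main obstacle I expect is bookkeeping the transpose and inverse conventions: the group $GL_n$ acts on $\EE^{\pm}$ by $(\alpha\cdot\lambda)(v)=\lambda({}^t\alpha v)$, acts on weight-$\rho$ vectors by $v\mapsto\rho({}^t\alpha)^{-1}v$, and the operator $\tau^e$ is defined with its own transpose-and-multiply rule $[\tau^e(a,b)h](u_1,\dots)=h({}^ta u_1 b,\dots)$; reconciling these so that the factor comes out as exactly $(\rho\otimes\tau^d)(c^{-1})$ rather than some transpose or inverse thereof requires care. I would handle this by checking the scalar weight-$(k,k)$ case (where $\rho\otimes\tau^d$ is a power of the determinant and a symmetric power) as a sanity check, matching it against the $U(1,1)$ remark following Theorem \ref{pdivder}, and then arguing the general case by the same naturality. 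Everything else reduces to the fact, already established in the construction of $\paop$ and in Theorem \ref{paalgthm}, that the operator and its values are natural in the trivialization.
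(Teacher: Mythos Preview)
Your core approach is correct and in fact matches the paper's: the definition $\theta^d = \omega_{can}^{-1}\circ\paop\circ\omega_{can}$ gives $\paop(\tilde f) = \omega_{can}\circ\theta^d(f)$, and then the transformation law for a section of weight $\rho\otimes\tau^d$ under change of basis $\omega^{\pm}=c^{\pm}\cdot\omega_{can}^{\pm}$ immediately yields the factor $(\rho\otimes\tau^d)(c^{-1})$. Your second paragraph already contains the complete argument.

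Where you go astray is in your step (ii): you propose to route the transformation law through Theorem~\ref{paalgthm} and the map $\partial(\rho\otimes\tau^f,\padic,d)$, claiming ``this is where condition \pstrictp{} is used.'' This is unnecessary detour. The transformation law under change of trivialization is purely formal---it is precisely the content of Definition~\ref{autd1}(3) (equivalently, the isomorphism \eqref{isototens}) applied to the section $\paop(\tilde f)$ of $(\uo^-\otimes\uo^+)^{\rho\otimes\tau^d}$, and it holds at every point regardless of any splitting of $\dr$. Theorem~\ref{paalgthm} is a much deeper statement about comparing the $p$-adic operator to the algebraic map $\partial(\rho,e,x,\lambda,\splxr,d)$, and it plays no role here. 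Indeed, the paper's proof is two lines and never invokes \pstrictp{}; the hypothesis sits in the statement only because the lemma is immediately fed into the subsequent theorem combining it with Theorem~\ref{paalgthm}. So drop step (ii) entirely, and your argument collapses to the paper's.
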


\begin{proof}
We have
$\paop(\tilde{f}) = \omega_{can}\circ\theta^d (f).$
So by \eqref{ctrans},
\begin{align*}
\paop(\tilde{f})(\rvaluedpt, \omega)  = (\rho\otimes \tau^d)(c^{-1})\paop(\tilde{f})(\rvaluedpt, \omega_{can}(\rvaluedpt))
= ((\rho\otimes\tau^d)(c^{-1}))(\theta^d f)(\rvaluedpt)
\end{align*}
\end{proof}

The same method also gives a similar result when we restrict to subrepresentations $Z$ of $\rho\otimes\tau^d$:
\begin{cor}\label{correln}
With hypotheses as in Lemma \ref{lereln},
$\paop^Z(f)(\rvaluedpt, \omega)= ((\rho\otimes\tau^d)\vert_Z(c^{-1}))(\theta^Z \tilde{f})(\rvaluedpt).$
\end{cor}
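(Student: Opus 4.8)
The plan is to deduce Corollary~\ref{correln} directly from Lemma~\ref{lereln} by carrying the same argument through the subrepresentation $Z$. First I would recall the two ingredients that make the proof of Lemma~\ref{lereln} work: the identity $\paopo(\tilde f) = \omega_{can}\circ\theta^d(f)$ (equivalently $\theta^d = \omega_{can}^{-1}\circ\paop\circ\omega_{can}$ from Theorem~\ref{pdivder}), and the transformation rule \eqref{ctrans} relating $\omega^{\pm}$ to $\omega_{can}^{\pm}(\rvaluedpt)$ by $c^{\pm}\in GL_n(R_0)$, which forces any section of weight $\rho\otimes\tau^d$ to pick up the factor $(\rho\otimes\tau^d)(c^{-1})$ when its value is re-expressed with respect to $\omega$ in place of $\omega_{can}(\rvaluedpt)$.

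The key step is then to compose everything with the projection $\phi_Z$ onto the $GL_n\times GL_n$-stable quotient $Z$ of $\rho\otimes\tau^d$. By definition $\paop^Z = \phi_Z\circ\paop$ and $\theta^Z = \phi_Z\circ\theta^d\vert_{(\Oig)^\rho} = \omega_{can}^{-1}\circ\paop^Z\circ\omega_{can}$, the last equality being exactly the displayed formula just before the corollary. Since $\phi_Z$ is $GL_n\times GL_n$-equivariant, it intertwines the action of $(\rho\otimes\tau^d)(c^{-1})$ on $\mformsxrd$ (or rather its value space) with the action of $(\rho\otimes\tau^d)\vert_Z(c^{-1})$ on $Z$; that is, $\phi_Z\bigl((\rho\otimes\tau^d)(c^{-1})\,v\bigr) = (\rho\otimes\tau^d)\vert_Z(c^{-1})\,\phi_Z(v)$. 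Applying $\phi_Z$ to both sides of the conclusion of Lemma~\ref{lereln}, evaluated at $(\rvaluedpt,\omega)$, and using this equivariance together with $\phi_Z\circ\paop = \paop^Z$ and $\phi_Z\circ\theta^d = \theta^Z$ on $(\Oig)^\rho$, yields
\begin{align*}
\paop^Z(\tilde f)(\rvaluedpt,\omega) = \bigl((\rho\otimes\tau^d)\vert_Z(c^{-1})\bigr)(\theta^Z\tilde f)(\rvaluedpt),
\end{align*}
which is the asserted identity (matching the statement up to the harmless interchange of $f$ and $\tilde f$, since $\tilde f = \omega_{can}^{-1}(f)$).

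I do not expect any real obstacle here: the only thing to be careful about is that $\phi_Z$ genuinely commutes with the weight-$(\rho\otimes\tau^d)$ twist, which is immediate from the fact that $\phi_Z$ is a morphism of $GL_n\times GL_n$-representations (that is precisely the hypothesis ``$Z$ is a $GL_n(R)\times GL_n(R)$-stable quotient''), and that the factor of automorphy $c$ does not depend on $Z$. So the proof is a one-line corollary: ``Apply $\phi_Z$ to the identity of Lemma~\ref{lereln} and use that $\phi_Z$ is $GL_n\times GL_n$-equivariant, together with $\paop^Z = \phi_Z\circ\paop$ and $\theta^Z = \phi_Z\circ\theta^d$.'' If one wanted to be fully self-contained one could instead rerun the two-line computation in the proof of Lemma~\ref{lereln} with $\paop$ replaced by $\paop^Z$ and $\theta^d$ by $\theta^Z$ throughout, invoking $\theta^Z = \omega_{can}^{-1}\circ\paop^Z\circ\omega_{can}$ in place of its $Z$-free analogue.
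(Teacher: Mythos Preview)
Your proposal is correct and matches the paper's approach exactly: the paper simply says ``The same method also gives a similar result when we restrict to subrepresentations $Z$,'' and you have spelled out precisely what that means---either apply the $GL_n\times GL_n$-equivariant projection $\phi_Z$ to the identity of Lemma~\ref{lereln}, or equivalently rerun the two-line computation with $\paop^Z$ and $\theta^Z$ in place of $\paop$ and $\theta^d$. You also correctly flagged the $f$/$\tilde f$ swap between the lemma and the corollary as a harmless notational slip in the paper.
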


As a corollary of Theorem \ref{paalgthm} and Lemma \ref{lereln}, we obtain the following theorem. \begin{thm}
Let $\rvaluedpt$ be an $R$-valued point of the moduli scheme $\moduli(\padic)$, and let $f$ be an automorphic form over $R$ of weight $(\rho, V)$, and let $Z$ be a subrepresentation of $\rho\otimes\tau^d$.  
Then
\begin{align*}
((\rho\otimes\tau^d)\vert_Z(c^{-1}))(\theta^Z \tilde{f})(\rvaluedpt) = \ir (\partial(\rho, e, \rvaluedpt, \omega, \Split(A/R), d)f).
\end{align*}
Therefore,
$((\rho\otimes\tau^d)\vert_Z(c^{-1}))(\theta^Z \tilde{f})(\rvaluedpt)$
lies in the $R$-submodule $Z = R\otimes_R Z$ of $R_0\otimes_R Z$.
\end{thm}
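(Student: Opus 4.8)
The statement to be proved combines the $p$-adic arithmeticity theorem (Theorem~\ref{paalgthm}) with the comparison of $\theta^Z$ and $\paop^Z$ recorded in Corollary~\ref{correln}. The plan is simply to chain these two facts together, keeping careful track of where the various objects live. First I would observe that an $R$-valued point $\rvaluedpt$ of $\moduli(\padic)$ which is the (ordinary CM) point we care about comes equipped with a splitting $\Split(A/R)$ of $\dr(A/R)$; I would recall from the discussion preceding Theorem~\ref{paalgthm} that at such a point one may choose $\Split(A/R)$ so that condition \pstrictp{} holds (the unit root subspace matches the chosen splitting after extension of scalars to $R_0$). Fix also $\lambda = \omega \in \EAR$, which via \eqref{ctrans} is related to $\omega_{can}(\rvaluedpt)$ by the matrix $c \in (GL_n\times GL_n)(R_0)$.

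Next I would apply Corollary~\ref{correln}: with $\tilde f = \omega_{can}^{-1}(f)$, one has
\begin{align*}
\paop^Z(\tilde f)(\rvaluedpt, \omega) = \bigl((\rho\otimes\tau^d)\vert_Z(c^{-1})\bigr)(\theta^Z f)(\rvaluedpt).
\end{align*}
This is the analogue, for the subrepresentation $Z$, of Lemma~\ref{lereln}, and it rewrites the left-hand side of the desired identity in terms of the $p$-adic differential operator $\paop^Z$ evaluated at the point $(\rvaluedpt,\omega)$ (here I am tacitly identifying $\theta^Z f$ with $\theta^Z\tilde f$ up to the canonical identification $\omega_{can}$, exactly as in the statement). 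Then I would invoke the version of Theorem~\ref{paalgthm} for the operators $\paop^Z$ — which the excerpt asserts holds by the same argument as the scalar-target case, replacing the commutative diagram in the proof of Theorem~\ref{cialgthm} by its $Z$-truncated analogue \eqref{algcomdiag2} and ``$\ci$'' by ``$\padic$'' — to conclude
\begin{align*}
(\paop^Z f(\padic))(\rvaluedpt,\omega)_{R_0} = \ir\bigl(\partial(\rho, e, \rvaluedpt, \omega, \Split(A/R), d)^{Z}f\bigr).
\end{align*}
Composing the two displayed equalities yields the asserted formula, and since $\partial(\rho,e,\rvaluedpt,\omega,\Split(A/R),d)^Z$ takes values in $V\otimes_R(R^n\otimes R^n)^{\otimes d}$ projected into the $R$-submodule $\lambda(Z)\cong R\otimes_R Z$ (by the construction in Section~\ref{agdefmaps}), the final integrality assertion follows: the element $\bigl((\rho\otimes\tau^d)\vert_Z(c^{-1})\bigr)(\theta^Z\tilde f)(\rvaluedpt)$ lies in $R\otimes_R Z \subset R_0\otimes_R Z$.

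The only real subtlety — and the step I would be most careful about — is checking that condition \pstrictp{} is genuinely satisfied at the relevant ordinary CM points, so that Theorem~\ref{paalgthm} applies; this is precisely the point where the unit root splitting of Proposition~\ref{FrHodgemx} and the key lemma over the Mumford object (Lemma~\ref{keylemma}) enter, and it is deferred in the excerpt to the (not-yet-seen) section on CM points. Everything else is bookkeeping: matching the normalizations in \eqref{ctrans}, confirming that $\theta^Z = \omega_{can}^{-1}\circ\paop^Z\circ\omega_{can}$ commutes with restriction to $Z$, and verifying that the target module $\lambda(Z)$ is the $R$-form inside $R_0\otimes_R Z$ cut out by the construction of $\partial(\cdots)^Z$. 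No new ideas beyond Theorem~\ref{paalgthm} and Corollary~\ref{correln} should be needed; the proof is a two-line deduction once those are in hand.
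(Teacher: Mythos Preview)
Your proposal is correct and follows exactly the approach the paper takes: the paper's entire proof is the sentence preceding the theorem, ``As a corollary of Theorem~\ref{paalgthm} and Lemma~\ref{lereln}, we obtain the following theorem,'' and you have simply unpacked this two-line deduction (using the $Z$-version Corollary~\ref{correln} in place of Lemma~\ref{lereln}, as is appropriate). Your observation that the hypothesis \pstrictp{} is implicit in the statement and is verified for ordinary CM points only later (in Section~\ref{cmsection}) is also accurate.
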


\section{Splitting of $\dr$ for CM abelian varieties}
\label{cmsection}
We now discuss conditions under which an abelian variety $A/R$ has a splitting $\dr(A/R) = \uo \oplus M$ over $R$
that simultaneously satisfies both condition \pstrict and \pstrictp.

Let $E\times E'$ be a product of CM algebras $E$ and $E'$, with
$E  = L_1\times\cdots\times L_{\me}$
and $E' = L_1'\times \cdots\times L_{\mep}'$
products of CM fields $L_i$, $L_i'$ such that each field $L_i$, $L_i'$ is a totally real extension of the CM field $\cmfield$ fixed in Section \ref{notation}.  Let 
$\gothsig = \gothsig_E\times \gothsig_{E'}$
be a CM type for $E\times E'$, where
$\gothsig_E  = \gothsig_{L_1}\times \cdots\times\gothsig_{L_{\me}}$
and $\gothsig_E'  = \gothsig_{L_1'}\times\cdots\times\gothsig_{{L'}_{\mep}}$
are $CM$ types for $E$ and $E'$.
\begin{defi}
We shall say the CM type $\gothsig$ {\bf is compatible with $\cmtype$} if the following two conditions are both met:
\begin{enumerate}
\item{For each $\sigma$ in $\gothsig_E$, $\sigma\lvert_\cmfield$ is in $\cmtype$.}
\item{For each $\sigma$ in $\gothsig_{E'}$, $\sigma\rvert_\cmfield$ is in $\bar{\cmtype}$.}
\end{enumerate}
\end{defi}

Suppose $(E\times E', \gothsig)$ is a CM type compatible with $(\cmfield, \cmtype)$.  So $
E = L_1\times\cdots\times L_{\me}$ and
$E' = L_1'\times \cdots\times L_{\mep}'$,
with each $L_i$ and each $L_i'$ a totally real extension of $K$, i.e. $L_i$ (resp. $L_i'$) is of the form $F_i\otimes K$ (resp. $F_i'\otimes K$) for some totally real fields $F_i$.  We use the following notation:
\begin{align*}
\scripto_E & = \scripto_{L_1}\times\cdots\times\scripto_{L_{\me}}\\
\scripto_{E'} & = \scripto_{L_1'}\times\cdots\times\scripto_{L_{\mep}}\\
\scripto_{E\times E'} &= \scripto_E\times\scripto_{E'}\\
\scripto &= \scripto_{F_1}\times\cdots\times\scripto_{F_{\me}}\times\scripto_{F_1'}\times\cdots\times\scripto_{F_{\mep}}.
\end{align*}

Let $R$ be a $\ZZ_{(p)}$-subalgebra of $\bar{\IQ}$ containing each $\scripto_{L_i}$ and each $\scripto_{L_i'}$.  For each CM type $\left(L_i, \gothsig_{L_i}\right)$, there is a natural ring isomorphism
\begin{align*}
\scripto_{L_i}\otimes R &\isomto \scripto_{F_i}\otimes R\times \scripto_{F_i}\otimes R\\
a\otimes r &\mapsto \phi_{\gothsig_i}(a\otimes r)\times\phi_{\gothsig_i}(\bar{a}\otimes r),
\end{align*}
where
\begin{align*}
\phi_{\gothsig_i}: \scripto_{L_i}\otimes R\rightarrow \scripto_{F_i}\otimes R\cong R^{\gothsig_i}\\
a\otimes r\mapsto \prod_{\sigma\in \gothsig_i} \sigma(a)r.
\end{align*}
is the projection 
A similar isomorphism holds for each $(L_i', \gothsig_{L_i'})$.  So there is a corresponding ring homomorphism
\begin{align*}
\scripto_{E\times E'}\otimes R\rightarrow \scripto\otimes R\times \scripto\times R\\
a\otimes r\mapsto (\phi_{\gothsig}(a)r, \phi_{\gothsig}(\bar{a})),
\end{align*}
and for any $\scripto_{E\times E'}\otimes R$-module $M$, there is a corresponding $\scripto\otimes R$-decomposition
$M\cong M(\gothsig)\oplus M(\gothsig),$
with
$M(\gothsig)  = \left\{m\in M\rvert a\cdot m = \phi_{\gothsig}(a)m \mbox{ for all } a\in \scripto_{E\times E'}\right\}$, 
$M(\bar{\gothsig})  = \left\{m\in M\rvert a\cdot m = \phi_{\gothsig}(\bar{a})m \mbox{ for all } a\in \scripto_{E\times E'} \right\}$.
If $M$ is an invertible $\scripto_{E\times E'}\otimes R$-module, then so are $M(\gothsig)$ and $M(\bar{\gothsig})$ as $\scripto\otimes R$-modules.

\begin{prop}\label{cmsplit}[Analogue of Key Lemma 5.1.27 in \cite{kaCM}] Let $(\gothsig, E\times E')$ be a CM type compatible with $(\cmtype, \cmfield)$, and let $R$ be as above.  If $A$ is an ordinary CM abelian variety of PEL type over $R$ with complex multiplication by $(\scripto_{E\times E'}, \gothsig)$, then
$\uo_{A/R} = \dr(\gothsig),$
and the splitting
$\dr(A/R) = \dr(\gothsig)\oplus \dr(\bar{\gothsig})$
simultaneously satisfies both \pstrict and \pstrictp.
\end{prop}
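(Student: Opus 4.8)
The plan is to reduce the statement to the corresponding statement over $\IC$ and over $\IC_p$ separately, exploiting the explicit description of the Hodge and unit-root splittings of a CM abelian variety in terms of its CM type. First I would recall that since $A/R$ has CM by $(\scripto_{E\times E'}, \gothsig)$, the $\scripto_{E\times E'}\otimes R$-module $\dr(A/R)$ decomposes as $\dr(\gothsig)\oplus\dr(\bar\gothsig)$, as described just above the proposition; this decomposition is manifestly horizontal for $\nabla$ by the same argument as in \eqref{endnb}--\eqref{gmonminus}, since each idempotent of $\scripto_{E\times E'}\otimes R$ arises from an endomorphism of $A$ defined over $R$ (the connection commutes with functoriality in endomorphisms). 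So the candidate splitting automatically satisfies the horizontality requirement built into both \pstrict\ and \pstrictp.

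Next I would identify $\uo_{A/R}$ with $\dr(\gothsig)$. This is a rank count combined with the CM-type compatibility: the action of $\scripto_{E\times E'}$ on $\Lie(A)$ (equivalently on $\uo\dual$, equivalently on $\uo$ via the involution from the polarization) picks out exactly the $\gothsig$-eigenspace, because $\gothsig$ is a CM type and $A$ has the Shimura--Taniyama property. The compatibility of $\gothsig$ with $\cmtype$ guarantees that, under the $\OK\otimes\mathcal{O}_S$-splitting $\uo = \uo^+\oplus\uo^-$ from Section \ref{seczero}, the $L_i$-part lands in $\uo^+$ and the $L_i'$-part in $\uo^-$, so that $\uo_{A/R} = \dr(\gothsig)$ holds compatibly with all the PEL structure. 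Here I would lean on the CM-type bookkeeping exactly as Katz does in Lemma 5.1.27 of \cite{kaCM}, just carried out for the two factors $E$ and $E'$ matched to $\cmtype$ and $\bar\cmtype$.

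Now for condition \pstrict: extend scalars via $\ir_R: R\hookrightarrow\IC$. Over $\IC$ the Hodge decomposition of $H^1(A^{\an}_{\IC},\IC)$ is $H^{1,0}\oplus H^{0,1}$, and $H^{1,0} = \uo\otimes\IC = \dr(\gothsig)\otimes\IC$. Complex conjugation acts on the CM splitting by swapping $\gothsig\leftrightarrow\bar\gothsig$ (this is where one uses that $E\times E'$ is CM, not merely étale, over $\IQ$), so $H^{0,1} = \overline{H^{1,0}} = \dr(\bar\gothsig)\otimes\IC$. Hence $\dr(\bar\gothsig)\otimes\IC = \splci(x_\IC)$, which is precisely \pstrict. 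For condition \pstrictp: the abelian variety is ordinary by hypothesis, and for an ordinary CM abelian variety the unit-root submodule $\uroot$ of Proposition \ref{FrHodgemx} is the part of $\dr$ on which Frobenius acts invertibly; since Frobenius commutes with the $\scripto_{E\times E'}$-action, $\uroot$ is a sub-$\scripto_{E\times E'}\otimes R_0$-module, and a CM-type/Newton-polygon computation (the $\gothsig$-eigenspace has slope $0$ for $p$-adic Frobenius on an ordinary CM variety, the $\bar\gothsig$-eigenspace slope $1$, after reducing mod $p$ and lifting by the uniqueness in Proposition \ref{FrHodgemx}) shows $\uroot\otimes R_0 = \dr(\bar\gothsig)\otimes R_0$. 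Combined with the uniqueness of the unit-root splitting, this gives \pstrictp.

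The main obstacle I expect is the $p$-adic slope computation: one must check carefully that for an \emph{ordinary} CM abelian variety with CM type compatible with $\cmtype$, the unit-root submodule of $\dr$ is exactly the $\bar\gothsig$-eigenspace $\dr(\bar\gothsig)$, and not some twist of it. This requires matching the étale/formal splitting $A[p^r] = A[p^r]^{\et}\oplus A[p^r]^0$ (whose existence is the ordinarity hypothesis, cf. the inclusion $\alpha_p:\mu_p^g\hookrightarrow A$) against the CM decomposition, and verifying that the canonical subgroup $H_{can}$ used to define $\pi: X\to X'$ in the construction of the unit-root splitting is $\scripto_{E\times E'}$-stable with the expected eigenvalue pattern. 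Once that compatibility is pinned down, the rest is the formal argument above; everything else is a routine transcription of Katz's one-dimensional CM argument, with the only genuinely new input being the two-factor CM type $E\times E'$ matched to $\cmtype\sqcup\bar\cmtype$.
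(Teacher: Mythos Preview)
Your approach is essentially the one the paper takes: establish $\uo_{A/R}=\dr(\gothsig)$ by the CM-type argument on $\Lie(A^\vee)$, then for \pstrict\ and \pstrictp\ defer to the corresponding computation over $\IC$ and $\IC_p$ exactly as in Katz's Key Lemma 5.1.27. The paper's proof is terse---it just writes the exact sequence $0\to\uo\to H(\gothsig)\oplus H(\bar\gothsig)\to\Lie(A^\vee/R)\to 0$, observes $H(\gothsig)$ dies in $\Lie(A^\vee/R)$, matches ranks, and then cites Katz verbatim for the rest---so your more explicit sketch of both the complex and $p$-adic sides is fine and arguably clearer.

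Two small corrections. First, there is no horizontality requirement in conditions \pstrict\ and \pstrictp: those are conditions on a splitting of $\dr(X/R)$ at a single $R$-valued point, where $\Omega_{\Spec R/R}=0$ and the Gauss-Manin connection is trivial. The horizontality of $\uroot$ in Proposition~\ref{FrHodgemx} is a global statement over the Igusa tower, not something you need to verify fiberwise; your opening paragraph about \eqref{endnb} is harmless but irrelevant. Second, in your slope parenthetical you have the slopes swapped: by Lemma~\ref{lemka2} we have $\pi^*(F^*\uo)=p\uo$, so $\uo=\dr(\gothsig)$ is the slope-$1$ piece and $\uroot$ is slope $0$; thus it is $\dr(\bar\gothsig)$ that carries slope $0$, matching your (correct) conclusion $\uroot\otimes R_0=\dr(\bar\gothsig)\otimes R_0$. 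The logic survives, but the labels in the parenthetical should be exchanged.
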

\begin{proof}
Let $H = \dr(A/R)$.  Since $(A, \gothsig)$ is a CM abelian variety over $R$,
$H = H(\gothsig)\oplus H(\bar{\gothsig})$
is an invertible $\scripto_{E\times E'}\otimes R$-module.  So $H(\gothsig)$ and $H(\bar{\gothsig})$ are invertible $\scripto\otimes R$-modules.  The action of $\scripto_{E\times E'}$ on $\Lie(A\dual/R)$ is through $a\mapsto \phi_{\bar{\gothsig}}$.  So in the exact sequence
$0\rightarrow \uo\rightarrow H(\gothsig)\oplus H(\bar{\gothsig})\rightarrow \Lie(A\dual/R)\rightarrow 0,$
$H(\gothsig)$ maps to $0$ in $\Lie(A/R)$.  So $H(\gothsig)$ is contained in $\uo$.  Since $A$ is ordinary, $\uo$ is an invertible $\scripto\otimes R$-module.  So $H(\gothsig) = \uo$.  The rest of the proof now follows exactly as in \cite{kaCM} Key Lemma 5.1.27.
\end{proof}

Let $U(1)^n$ denote the subgroup
\begin{align*}
\underbrace{U(1)\times \cdots\times U(1)}_{n \mbox{ times}}
\end{align*}
of $U(n)$.
Consider the natural embedding
$Sh\lp U(1)^n\times U(1)^n\rp\hookrightarrow Sh(U(n)\times U(n))\hookrightarrow Sh(U(n,n)).$
The points of $Sh\lp U(1)^n\times U(1)^n\rp$ parametrize abelian varieties isogenous to a CM abelian variety of the form 
\begin{align*}
\underbrace{A\times \cdots \times A}_{2n \mbox{ copies of } A}
\end{align*}
(where each copy of $A$ is one-dimensional) with CM type
\begin{align*}
\underbrace{\left(\cmfield, \cmtype\right)\times \cdots\times \left( \cmfield, \cmtype\right)}_{n \mbox{ times}}\times \underbrace{\left(\cmfield, \bar{\cmtype}\right)\times\cdots \times \left(\cmfield, \bar{\cmtype}\right)}_{n \mbox{ times}}.
\end{align*}
An abelian variety parametrized by a point of $Sh(U(n)\times U(n))$ is isogenous to an abelian variety parametrized by $Sh\lp U(1)^n\times U(1)^n\rp$.  So points of $Sh(U(n)\times U(n))$ parametrize CM abelian varieties compatible with $\cmtype$.  Since each of the abelian varieties in $Sh(U(n)\times U(n))$ is of CM type compatible with $\cmtype$, we obtain:

\begin{cor}\label{pullco}
Each abelian variety parametrized by $Sh(U(n)\times U(n))$ has a splitting simultaneously satisfying \pstrict{ and }\pstrictp.
\end{cor}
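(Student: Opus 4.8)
The plan is to deduce the corollary from Proposition~\ref{cmsplit} by an isogeny-transfer argument, using the geometry of the embedding $Sh(U(1)^n\times U(1)^n)\hookrightarrow Sh(U(n)\times U(n))$ recalled above.

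First I would fix an $R$-valued point $B$ of $Sh(U(n)\times U(n))$, where $R$ is a $\ZZ_{(p)}$-subalgebra of $\bar{\IQ}$ of the type occurring in Proposition~\ref{cmsplit} (so that both the archimedean embedding needed for \pstrict and the $p$-adic completion needed for \pstrictp are available). By the discussion preceding the corollary, $B$ is isogenous, via a prime-to-$p$ isogeny $\phi\colon B\to C$, to $C := A\times\cdots\times A$ ($2n$ copies), where $A$ is a one-dimensional CM abelian variety with complex multiplication by $\OK$ and CM type $\cmtype$ on the first $n$ factors and $\bar{\cmtype}$ on the last $n$ (this is exactly how $Sh(U(1)^n\times U(1)^n)$ parametrizes its abelian varieties). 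Since $(p)$ splits completely in $\cmfield$, such an $A$ is ordinary at $p$, hence so is $C$. Now $C$ carries complex multiplication by $\scripto_{E\times E'}$ with $E=E'=\OK^n$ --- i.e.\ each $L_i=L_i'=\cmfield$, $F_i=F_i'=\IQ$ --- and CM type $\gothsig=\gothsig_E\times\gothsig_{E'}$ with $\gothsig_E=\cmtype^n$, $\gothsig_{E'}=\bar{\cmtype}^n$, which visibly satisfies the two conditions defining compatibility with $\cmtype$. Thus Proposition~\ref{cmsplit} applies to $C$: the splitting $\dr(C/R)=\dr(\gothsig)\oplus\dr(\bar{\gothsig})$, with $\dr(\gothsig)=\uo_{C/R}$, simultaneously satisfies \pstrict and \pstrictp.

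Next I would transfer this splitting along $\phi$. Because $\phi$ is prime to $p$, a quasi-inverse isogeny $\psi$ with $\psi\circ\phi=[\deg\phi]$ shows that $\phi^{*}\colon \dr(C/R)\to \dr(B/R)$ is an isomorphism of $R$-modules carrying $\uo_{C/R}$ onto $\uo_{B/R}$. Setting $M:=\phi^{*}\bigl(\dr(\bar{\gothsig})\bigr)$ therefore gives a splitting $\dr(B/R)=\uo_{B/R}\oplus M$, and it remains to check that the pair $(B,M)$ satisfies both conditions. For \pstrict, over $\IC$ the map $\phi^{*}$ induces an isomorphism $H^1(C^{\an}_{\IC},\IC)\isomto H^1(B^{\an}_{\IC},\IC)$ respecting the Hodge decomposition, hence carrying the antiholomorphic subspace $H^{0,1}(C_{\IC})$ onto $H^{0,1}(B_{\IC})$; since $\dr(\bar{\gothsig})\otimes\IC$ is the former (condition \pstrict for $C$), $M\otimes\IC$ is the latter, which is \pstrict for $B$. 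For \pstrictp, over the $p$-adic completion $R_0$ the morphism $\phi^{*}$ is compatible with the Frobenius map $Fr$ and with the Gauss--Manin connection (functoriality of $\pi^{*}$ and \eqref{endnb}), so it carries any splitting of $\dr(C/R_0)$ with the two characterizing properties of Proposition~\ref{FrHodgemx} to a splitting of $\dr(B/R_0)$ with those properties; by the uniqueness asserted there, $\phi^{*}\bigl(\uroot(C/R_0)\bigr)=\uroot(B/R_0)$, and since $\dr(\bar{\gothsig})\otimes R_0=\uroot(C/R_0)$ (condition \pstrictp for $C$), $M\otimes R_0=\uroot(B/R_0)$, which is \pstrictp for $B$.

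The one step needing genuine care is the functoriality of the unit root splitting under the isogeny --- that $\phi^{*}$ sends $\uroot$ of the target to $\uroot$ of the source. I would obtain this from the uniqueness part of Proposition~\ref{FrHodgemx}: $\phi^{*}$ commutes with $\pi^{*}F^{*}$ and with $\nabla$, so $\phi^{*}(\uroot(C/R_0))$ is a $\nabla$-stable complement to $\uo_{B/R_0}$ on which $\pi^{*}F^{*}$ becomes an isomorphism over $\IQ$, and these properties pin it down uniquely as $\uroot(B/R_0)$. Everything else --- the explicit CM data on $C$, the verification of compatibility with $\cmtype$, and the $R$-module statements about $\phi^{*}$ --- is routine.
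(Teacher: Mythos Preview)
Your argument is correct. The paper's route is slightly more direct: instead of applying Proposition~\ref{cmsplit} to the product $C$ and then transferring the resulting splitting back along the isogeny $\phi$, the paper observes that each abelian variety $B$ in $Sh(U(n)\times U(n))$, being isogenous to such a product, is \emph{itself} a CM abelian variety whose CM type is compatible with $\cmtype$, so that Proposition~\ref{cmsplit} applies directly to $B$ with no transfer step needed. Your isogeny-transfer approach has the virtue of making explicit why the Hodge and unit root splittings are carried to one another---in particular your use of the uniqueness in Proposition~\ref{FrHodgemx} to pin down the unit root subspace is a clean way to handle the $p$-adic side---whereas the paper's argument is terser but asks the reader to accept that the hypotheses of Proposition~\ref{cmsplit} (complex multiplication by the full ring $\scripto_{E\times E'}$, ordinariness) are already satisfied by $B$ in its prime-to-$p$ isogeny class.
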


Corollary \ref{pullco} is crucial to our applications involving the pullback method to construct $L$-functions, which only requires evaluating automorphic forms at points of $U(n)\times U(n)$.

\begin{rmk}
Note that the proof of Proposition \ref{cmsplit} shows that there are also other abelian varieties which have a splitting simultaneously satisfying both conditions {\pstrict } and \pstrictp.  For example, suppose $A$ is a CM abelian variety with CM by a CM field $L = F\otimes K$ in which $p$ splits completely in $F$, where $\deg F = 2n = \dim A$.  Let
$\gothsig = \left\{\sigma_1, \ldots, \sigma_{2n}\right\}$
be a CM type for $L$ such that
$\sigma_i\lvert_\cmfield \in \cmtype$
for $1\leq i\leq n$ and
$\sigma_i\lvert_\cmfield \in \bar{\cmtype}$
for $n+1\leq i\leq 2n$.  Then the proof of Proposition \ref{cmsplit} shows that $\dr(\gothsig)\oplus\dr(\bar{\gothsig})$ gives a splitting of $\dr$ satisfying conditions \pstrict and \pstrictp{ simultaneously}.  Thus, there are also abelian varieties (over $R$) in $Sh(U(2V))$ not in $Sh(U(n)\times U(n))$ that have a splitting (over $R$) simultaneously satisfying \pstrict{ and} \pstrict.  For all our intended applications (i.e. construction of certain $p$-adic $L$-functions using the doubling method), however, only abelian varieties of the type in Proposition \ref{cmsplit} will be relevant.
\end{rmk}

\bibliography{eischen}

\end{document}